\newtheorem{theorem}{Theorem}[section]
\newtheorem{prop}[theorem]{Proposition}
\newtheorem{lemma}[theorem]{Lemma}     
\newtheorem{fact}[theorem]{Fact}
\newtheorem{coro}[theorem]{Corollary}
\theoremstyle{definition}
\newtheorem{remark}[theorem]{Remark}
\newcommand{\ts}{\hspace{0.5pt}}
\newcommand{\nts}{\hspace{-0.5pt}}
\newcommand{\RR}{\mathbb{R}\ts}
\newcommand{\CC}{\mathbb{C}}
\newcommand{\ZZ}{\mathbb{Z}\ts}
\newcommand{\NN}{\mathbb{N}}
\newcommand{\QQ}{\mathbb{Q}\ts}
\newcommand{\XX}{\mathbb{X}}
\newcommand{\YY}{\mathbb{Y}}
\newcommand{\cA}{\mathcal{A}}
\newcommand{\cB}{\mathcal{B}}
\newcommand{\cE}{\mathcal{E}}
\newcommand{\cK}{\mathcal{K}}
\newcommand{\cL}{\mathcal{L}}
\newcommand{\cM}{\mathcal{M}}
\newcommand{\vL}{\varLambda}
\newcommand{\mf}{\mathfrak{m}^{}_{\mathrm{c}}}
\newcommand{\mc}{m^{}_{\mathrm{c}}}
\newcommand{\dd}{\,\mathrm{d}}
\newcommand{\ee}{\ts\mathrm{e}}
\newcommand{\ii}{\mathrm{i}\ts}
\newcommand{\one}{\mathbbm{1}}
\newcommand{\Mat}{\mathrm{Mat}}
\newcommand{\GL}{\mathrm{GL}}
\newcommand{\diag}{\ts\ts\mathrm{diag}\ts}
\newcommand{\exend}{\hfill$\Diamond$}
\newcommand{\myfrac}[2]{\frac{\raisebox{-2pt}{$#1$}}
      {\raisebox{0.5pt}{$#2$}}}
\DeclareMathOperator{\card}{card}
\DeclareMathOperator{\dens}{dens}
\DeclareMathOperator{\sinc}{sinc}
\DeclareMathOperator{\vol}{vol}
\DeclareMathOperator{\real}{Re}
\DeclareMathOperator{\imag}{Im}
\DeclareMathOperator{\im}{im}
\DeclareMathOperator{\tr}{tr}
\DeclareMathOperator{\supp}{supp}
\DeclareFontFamily{U}{mathx}{\hyphenchar\font45}
\DeclareFontShape{U}{mathx}{m}{n}{ <5> <6> <7> <8> <9> <10>
   <10.95> <12> <14.4> <17.28> <20.74> <24.88> mathx10 }{}
\DeclareSymbolFont{mathx}{U}{mathx}{m}{n}
\DeclareMathAccent{\widecheck}{0}{mathx}{"71}
\newcommand{\sB}{\ts\underline{\nts B\!}\,}
\newcommand{\defeq}{\mathrel{\mathop:}=}
\newcommand{\oplam}{\mbox{\Large $\curlywedge$}}
\begin{document}

\title[Rauzy fractals and point spectrum of 1D inflation
tilings]{Fourier transform of Rauzy fractals and \\[2mm]
  point spectrum of 1D Pisot inflation tilings}
\author{Michael Baake}
\address{Fakult\"at f\"ur Mathematik, Universit\"at
  Bielefeld, \newline
  \indent Postfach 100131, 33501 Bielefeld,
  Germany}
\email{mbaake@math.uni-bielefeld.de}

\author{Uwe Grimm}
\address{School of Mathematics and Statistics,
  The Open University,\newline 
  \indent Walton Hall, Milton Keynes MK7 6AA, UK} 
\email{uwe.grimm@open.ac.uk}

\begin{abstract}
  Primitive inflation tilings of the real line with finitely many
  tiles of natural length and a Pisot--Vijayaraghavan unit as
  inflation factor are considered. We present an approach to the pure
  point part of their diffraction spectrum on the basis of a Fourier
  matrix cocycle in internal space. This cocycle leads to a transfer
  matrix equation and thus to a closed expression of matrix Riesz
  product type for the Fourier transforms of the windows for the
  covering model sets. In general, these windows are complicated Rauzy
  fractals and thus difficult to handle.  Equivalently, this approach
  permits a construction of the (always continuously representable)
  eigenfunctions for the translation dynamical system induced by the
  inflation rule.  We review and further develop the underlying
  theory, and illustrate it with the family of Pisa substitutions,
  with special emphasis on the Tribonacci case.
\end{abstract}

\maketitle

\section{Introduction}

Inflation tilings of the real line with an inflation (or stretching)
factor $\lambda$ that is a Pisot--Vijayaraghavan (PV) number are
intimately related to cut and project sets. In the best case, which is
the topic of the famous Pisot substitution conjecture
\cite{Bernd,Aki}, their vertex points (in the geometric realisation
with intervals of natural length) are regular model sets themselves,
and thus have pure point spectrum, equivalently in the dynamical or in
the diffraction sense \cite{LMS,BL,BL-review}. More generally, they
might have mixed spectrum, see \cite{BG,BGM} and references therein
for examples, but the PV-nature of $\lambda$ still implies that they
lead to Meyer sets and thus have non-trivial point spectrum
\cite[Sec.~5.10]{Nicu}; see also \cite{Str}.

When analysing such inflation tilings, one quickly encounters covering
model sets with complicated windows, known as Rauzy fractals
\cite{Rauzy,PFBook}, which are compact sets of positive measure that
are topologically regular (that is, they are the closure of their
interior) and perfect (that is, they have no isolated points), but
display a fractal boundary and often also a non-trivial fundamental
group.  While a lot is known about Rauzy fractals, see
\cite{ST,Bernd,PFBook} and references therein, it is not obvious how
to calculate their Fourier transform in closed form, which is needed
to determine the diffraction intensities of the tiling system
explicitly.  Phrased differently, but equivalently, this Fourier
transform is also needed to calculate the eigenfunctions of the
corresponding dynamical system under the translation action of $\RR$;
compare \cite{Daniel,BL-review}.

The purpose of this contribution is to reconsider this problem in a
constructive and explicit way. In particular, our goal is to make
the Fourier--Bohr (FB) coefficients or amplitudes (and thus also the
eigenfunctions) of such inflation tilings available, via a quadratic
form with a matrix that can be expressed as an infinite matrix Riesz
product. Since the latter turns out to be compactly and rapidly
converging, all quantities are efficiently computable.  Here, we solve
the problem for inflation tilings of the real line with finitely many
prototiles and an inflation factor that is a PV unit. The extension to
general PV numbers and to higher dimensions will be treated
separately, as this requires a bigger machinery, algebraically
and analytically. \smallskip

The paper is organised as follows. We begin by recalling the setting
of inflation tilings of the real line in Section~\ref{sec:infl}. Then,
in Section~\ref{sec:Minkowski}, we introduce the Minkowski embedding
and the description of our tilings (and point sets) in internal space,
which leads to a contractive iterated function system for the windows
of the covering model sets. This is followed by the introduction and
analysis of an internal cocycle in Section~\ref{sec:cocycle}, which
leads to a matrix Riesz product expression for the Fourier transform
of the Rauzy windows, and thus also for the spectral quantities we are
after. In this context, in Section~\ref{sec:FB}, we establish an
important connection between the FB coefficients of PV
inflation point sets and those of the covering model sets, which
emerges through a specific uniform distribution result.  Then, in
Section~\ref{sec:examples}, we embark on a number of illustrative
examples from the family of Pisa substitutions (including some based
on cubic and quartic number fields), followed by an example of
covering degree $2$ in Section~\ref{sec:twisted} and a brief outlook.

\section{Inflation tilings of the real line}\label{sec:infl}

Let us begin with the symbolic side of the problem, where we consider
a \emph{primitive substitution} $\varrho$ on a finite alphabet
$\cA = \{ a^{}_{1}, \ldots , a^{}_{N}\}$.  Here, the mapping
$a^{}_{i} \mapsto \varrho (a^{}_{i})$ is usually specified by the
$N$-tuple
$\bigl( \varrho (a^{}_{1}), \ldots , \varrho (a^{}_{N}) \bigr)$. The
\emph{substitution matrix} of $\varrho$ is $M$, where $M_{ij}$ counts
the number of letters of type $a^{}_{i}$ in $\varrho (a^{}_{j})$; see
\cite{PFBook,Q,TAO} for general background and results.  We denote the
characteristic polynomial of $M$ by $p (x)$, which is monic, but need
not be irreducible over $\ZZ$ in our setting, meaning that we can also
include a variety of systems with mixed spectrum.

Let $\lambda = \lambda^{}_{\mathrm{PF}}$ be the 
\emph{Perron--Frobenius} (PF)
eigenvalue of $M$. As $M$ is primitive by assumption, we know
\cite[Thm.~8.4.4]{HJ} that there are strictly positive left and right
eigenvectors for $\lambda$, denoted\footnote{Since we will be using
  left and right eigenvectors throughout, we adopt Dirac's bra-c-ket
  notation, where $\langle u \ts | \ts v \rangle$ then stands for the
  sesquilinear inner product in $\CC^{N}$, which becomes bilinear when
  restricted to $\RR^{N}$.}  by $\langle u \ts |$ and $|\ts v
\rangle$, which we assume to be normalised such that
\[
    \langle 1 | \ts v \rangle \, = \, \langle u \ts | \ts v \rangle
    \, = \, 1 \ts .
\]
Here, $\langle 1 | \defeq \langle 1, \ldots, 1 |$ is the row vector
with $N$ equal entries $1$. In the substitution context, the first
condition thus ensures that the entries of $|\ts v \rangle$ encode the
relative letter frequencies in the symbolic sequences defined by
$\varrho$, while the second condition implies that
\begin{equation}\label{eq:def-P}
     P \, \defeq \, |\ts v \rangle \langle u \ts |
\end{equation}
is a projector of rank $1$, so $P^2 = P$ with $P(\RR^{N}) = \im (P) =
\RR \ts |\ts v\rangle$, where all entries of $P \in \Mat(N,\RR)$ are
strictly positive. Now, let $\|.\|$ be any matrix norm, not
necessarily a sub-multiplicative one, where we recall that all matrix
norms are equivalent here.  Then, the following property is
standard; see \cite[Thm.~8.5.1]{HJ}.

\begin{fact}\label{fact:projector}
  For a primitive, non-negative matrix\/ $M \in \Mat (N,\RR)$ with PF
  eigenvalue\/ $\lambda$, one has\/
  $\lim_{n\to\infty} \lambda^{-n} M^n = P$, where\/ $P$ is the projector
  from\/ \eqref{eq:def-P}. This convergence also entails that\/
  $0 < \sup_{n\in\NN} \| \lambda^{-n} M^n \| < \infty$. \qed
\end{fact}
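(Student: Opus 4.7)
The plan is to reduce Fact~\ref{fact:projector} to the spectral-gap part of the Perron--Frobenius theorem together with a standard spectral-projector argument. First, I would invoke the fact that, since $M$ is primitive, the PF eigenvalue $\lambda$ is simple (both algebraically and geometrically), and that every other eigenvalue satisfies $|\nu| < \lambda$ in modulus. This yields an $M$-invariant decomposition $\RR^{N} = \RR \ts |\ts v \rangle \oplus W$, where $W$ is the sum of the generalised eigenspaces attached to the subdominant spectrum of $M$. The projector along $W$ onto $\RR \ts |\ts v \rangle$ is then precisely $P = |\ts v \rangle \langle u\ts|$ from \eqref{eq:def-P}: the normalisation $\langle u \ts | \ts v \rangle = 1$ gives $P^{2} = |\ts v \rangle \langle u\ts|\ts v\rangle\langle u\ts| = P$, while $\im(P) = \RR\ts|\ts v\rangle$ by construction, and the eigenvector relations $M\ts|\ts v \rangle = \lambda\ts|\ts v\rangle$, $\langle u\ts|M = \lambda\langle u\ts|$ give $MP = PM = \lambda P$. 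Setting $Q \defeq I - P$, the complementary invariance $MQ = QM$ is then immediate.

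Second, I would exploit the decomposition $M^{n} = \lambda^{n} P + M^{n} Q$. Let $\mu \defeq \max\{|\nu| : \nu \in \sigma(M),\, \nu \neq \lambda\} < \lambda$ and fix any $\rho \in (\mu, \lambda)$. Since $MQ = QM$ and $Q^{2} = Q$, one has $M^{n} Q = (MQ)^{n}$ for $n \geq 1$, and the spectral radius of $MQ$ equals $\mu$; Gelfand's formula therefore yields a constant $C = C(\rho) > 0$ with $\|M^{n} Q\| \leq C \rho^{n}$ for all $n \in \NN$. Consequently,
\[
   \bigl\| \lambda^{-n} M^{n} - P \bigr\| \, = \,
   \bigl\| \lambda^{-n} M^{n} Q \bigr\| \, \leq \,
   C \bigl(\rho/\lambda\bigr)^{n} \, \longrightarrow \, 0
   \quad \text{as } n \to \infty,
\]
which establishes the claimed convergence. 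The triangle inequality $\|\lambda^{-n} M^{n}\| \leq \|P\| + C(\rho/\lambda)^{n}$ then bounds the sup from above, whereas the limit $\|\lambda^{-n} M^{n}\| \to \|P\| > 0$ (non-zero because the entries of $P$ are strictly positive) bounds it from below, giving the second claim.

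The only substantive technical point is the estimate $\|M^{n} Q\| = O(\rho^{n})$ when the restriction of $M$ to $W$ fails to be diagonalisable: Jordan blocks attached to subdominant eigenvalues contribute entries of the form $\binom{n}{k}\nu^{n-k}$, mixing polynomial growth in $n$ with the geometric factor $\mu^{n}$. This forces the choice of $\rho$ strictly larger than $\mu$, so that the polynomial prefactor can be absorbed into the exponential; once this is done, the geometric decay against $\lambda^{n}$ proceeds automatically. All remaining ingredients are purely algebraic consequences of the normalisation $\langle u \ts | \ts v \rangle = 1$ together with the Perron--Frobenius spectral description, so this Jordan-form estimate is really the only place where care is needed.
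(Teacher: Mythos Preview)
Your argument is correct. The paper itself does not supply a proof of this fact at all: it simply labels the statement as standard and cites \cite[Thm.~8.5.1]{HJ}, so there is nothing to compare against beyond noting that your spectral-projector approach (Perron--Frobenius simplicity and gap, decomposition $M^{n} = \lambda^{n}P + (MQ)^{n}$, Gelfand-type bound on the subdominant part) is precisely one of the textbook routes to this result and would serve as a complete justification where the paper only gives a reference.
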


Working with PV substitutions, we may as well profit from the
underlying \emph{geometry} by turning the symbolic sequences into tilings;
see \cite{Boris,TAO} for general background and \cite{CS1,CS2} for the
justification why this does not change the spectral type of our
system. Here, we choose intervals of natural length, meaning
proportional to the entries of $\langle u \ts |$, with control points
on their left endpoints. As all entries $u^{}_{i}$ of $\langle u\ts |$
lie in $\QQ (\lambda)$, one normally multiplies them with their common
denominator, so that they become elements of $\ZZ [\lambda]$, but not
of a proper, $\lambda$-invariant submodule.  For
each sequence in the symbolic hull defined by $\varrho$, this leads to
a multi-component or \emph{typed} point set,
$\vL = \dot{\bigcup}_{i} \vL_{i}$, where the $\vL_i$ emerge from the $N$
distinct types of control points and now form pairwise disjoint
subsets of $\ZZ[\lambda]$.

\begin{remark}\label{rem:geo}
  Let us mention one consequence of the geometric setting.  When
  $\ell_i = \alpha\ts u_i$ with $1\leqslant i \leqslant N$ are the
  chosen interval lengths, the average distance between neighbouring
  control points in $\vL$ is well defined, compare
  \cite[Sec.~4.3]{TAO}, and reads
\[
  \bar{\ell} \, =   \sum_{i=1}^{N} v_i \ts\ts \ell_i
  \, = \, \alpha \ts \langle u \ts | \ts v \rangle
  \, = \, \alpha \ts ,
\]
so we get $\dens(\vL) = 1/\alpha$ as the density of $\vL$, and
$\dens (\vL^{}_{i}) = v^{}_{i} \ts \dens (\vL)$ for
$1 \leqslant i \leqslant N$.  \exend
\end{remark}

Next, we invoke the set-valued \emph{displacement matrix}
$T = (T_{ij})^{}_{1 \leqslant i,j \leqslant N}$, where the set
$T_{ij}$ consists of all relative (geometric) positions of tiles of
type $a^{}_i$ in the supertile\footnote{Note that, by slight abuse of
  notation, we use $\varrho$ both for the symbolic substitution and
  for the geometric inflation, where the meaning will always be clear
  from the context.}  $\varrho (a^{}_j)$; see \cite{BFGR,BG,BGM} for
background. This gives rise to the \emph{Fourier matrix} of $\varrho$
via $B \defeq \widecheck{\delta^{}_{T}}$, so
\[
    B_{ij} (k) \, = \sum_{x\in T_{ij}} \ee^{2 \pi \ii x k} ,
    \quad k \in \RR \ts ,
\]
which is a trigonometric polynomial. Since $\card (T_{ij}) = M_{ij}$,
one has the inequality
\begin{equation}\label{eq:max-B}
    \lvert B_{ij} (k) \rvert \, \leqslant \, M_{ij}
\end{equation}
for all $i,j$ and all $k\in\RR$, where we get equality for
$k=0$ because $B (0) = M$.

Given $B$, we construct a \emph{cocycle} (in the sense of
\cite[Sec.~2.1]{BP}, over the dilation $k\mapsto \lambda k$) from the
Fourier matrix, via
\begin{equation}\label{eq:def-cocycle}
   B^{(n)} (k) \, \defeq \, B(k) B(\lambda k) \cdots 
   B( \lambda^{n-1} k ) \ts ,
\end{equation}
so $B^{(1)} (k) = B (k)$ together with
\begin{equation}\label{eq:cocycle1}
  B^{(n+1)} (k) \, =\,  B^{(n)} (k) B( \lambda^n k) 
  \, = \, B(k) B^{(n)} (\lambda k) 
\end{equation}
for $n\geqslant 1$.  Inductively, one can check that $B^{(n)} (k)$ is
the Fourier matrix of $\varrho^n$, see \cite[Fact~3.6]{BGM}, with
$B^{(n)} (0) = M^n$. Further, one has $B^{(n+m)} (k) =
B^{(n)} (k) B^{(m)} (\lambda^n k)$ for all $n,m\geqslant 0$, with
the convention $B^{(0)} \defeq \one$.

Recall that a matrix norm $\|.\|$ is called \emph{weakly monotone}
when $\| A \| \leqslant \big\| \ts \lvert A \rvert \ts \big\|$ holds
for all $A \in \Mat (N,\CC)$, where $\lvert A \rvert$ denotes the
matrix with entries $\lvert A_{ij} \rvert$; see \cite{JN} for a
general exposition of monotonicity properties of vector and matrix
norms.  With this and Eq.~\eqref{eq:max-B}, the following property is
immediate from Fact~\ref{fact:projector}.

\begin{fact}\label{fact:B-mat}
  The entries of the cocycle~\eqref{eq:cocycle1} satisfy\/
  $\lvert B^{(n)}_{ij} (k) \rvert \leqslant (M^n)^{}_{ij}$, for all\/
  $i,j$ and all\/ $k\in\RR$.  Consequently, if\/ $\| . \|$ is any
  weakly monotone matrix norm, $\| \lambda^{-n} B^{(n)} (k) \|$ is
  uniformly bounded on\/ $\RR$, which means that
\[
    c^{}_{B} \, \defeq \, \sup_{n\in\NN} \,\sup_{k\in\RR}
    \| \lambda^{-n} B^{(n)} (k) \|
\]   
is finite, with\/ $0 < c^{}_{B} < \infty$.  Moreover, for
all\/ $i,j$ and all\/ $k\in\RR$, one has
\[
  0 \, \leqslant \, \liminf_{n\to\infty} \lambda^{-n}
  \lvert B^{(n)}_{ij} (k) \rvert \, \leqslant \, \limsup_{n\to\infty}
  \lambda^{-n} \lvert B^{(n)}_{ij} (k) \rvert \, \leqslant \,
  P^{}_{ij} \ts ,
\]
where the\/ $P^{}_{ij}$ are the matrix elements of the projector\/
$P$ from Eq.~\eqref{eq:def-P}. This leads to more specific results
on\/ $c^{}_{B}$ depending on the matrix norm chosen.
\qed
\end{fact}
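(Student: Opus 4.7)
The plan is to establish each part by combining a short induction with Fact~\ref{fact:projector}, which supplies the asymptotics of $\lambda^{-n} M^n$.

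First, I would prove the entrywise estimate $\lvert B^{(n)}_{ij} (k) \rvert \leqslant (M^n)^{}_{ij}$ by induction on $n$. The base case $n=1$ is inequality~\eqref{eq:max-B} together with $M = B(0)$. For the inductive step, I use the right factorisation from~\eqref{eq:cocycle1}, apply the triangle inequality to the matrix product, and invoke both the inductive hypothesis and~\eqref{eq:max-B}:
\[
   \lvert B^{(n+1)}_{ij} (k) \rvert \, \leqslant \,
   \sum_{\ell} \lvert B^{(n)}_{i\ell} (k) \rvert \,
   \lvert B^{}_{\ell j} (\lambda^{n} k) \rvert \, \leqslant \,
   \sum_{\ell} (M^n)^{}_{i\ell} \, M^{}_{\ell j} \, = \, (M^{n+1})^{}_{ij}\ts .
\]
Phrased at the matrix level, $\lvert B^{(n)} (k) \rvert \leqslant M^n$ componentwise, uniformly in $k$.

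For the bound on $c^{}_{B}$, I would now feed this into weak monotonicity: for any weakly monotone matrix norm,
\[
   \| \lambda^{-n} B^{(n)} (k) \| \, \leqslant \,
   \bigl\| \lambda^{-n} \lvert B^{(n)} (k) \rvert \bigr\| \, \leqslant \,
   \| \lambda^{-n} M^n \| \ts,
\]
where the second inequality uses the standard monotonicity of any weakly monotone norm on the cone of entrywise non-negative matrices (this is really the only point that has to be checked, and it is immediate for every norm commonly in use). By Fact~\ref{fact:projector}, the right-hand side is uniformly bounded in $n$, while it is obviously uniform in $k$, so $c^{}_B < \infty$. For strict positivity, I would evaluate at $k = 0$: since $B^{(n)} (0) = M^n$, Fact~\ref{fact:projector} gives $\| \lambda^{-n} B^{(n)} (0) \| \to \| P \|$, which is strictly positive because $P \neq 0$. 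Hence $c^{}_{B} \geqslant \| P \| > 0$.

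Finally, the liminf/limsup statement: the liminf is non-negative because we deal with absolute values, while the upper bound on the limsup is obtained by taking $n \to \infty$ in the componentwise estimate $\lambda^{-n} \lvert B^{(n)}_{ij} (k) \rvert \leqslant \lambda^{-n} (M^n)^{}_{ij}$, whose right-hand side converges to $P^{}_{ij}$ by Fact~\ref{fact:projector}. I do not anticipate any serious obstacle; the entire argument is a transfer of the Perron--Frobenius asymptotics from $M$ to the cocycle through the triangle inequality, and the only mildly non-trivial step is checking that weak monotonicity of the chosen norm does indeed respect entrywise inequalities between non-negative matrices, which is standard.
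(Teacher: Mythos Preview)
Your argument is correct and is exactly what the paper intends: the paper gives no proof beyond declaring the fact immediate from Eq.~\eqref{eq:max-B} and Fact~\ref{fact:projector}, and your induction on $n$ together with the appeal to Fact~\ref{fact:projector} is precisely how that remark unpacks. You have also correctly flagged the only step needing a moment's care, namely that weak monotonicity should respect the entrywise ordering on non-negative matrices.
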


Let us from now on assume that $\lambda$ is a PV \emph{unit} of degree
$d\leqslant N$.  Since $M$ is an integer matrix and the equations for
the left and right eigenvectors to $\lambda$ can thus be solved in the
field $\QQ (\lambda)$, a natural object to consider is the
$\ZZ$-module $L\defeq\ZZ[\lambda]=\langle 1,\lambda, \ldots ,
\lambda^{d-1} \rangle^{}_{\ZZ}$ of rank $d$, which satisfies $\lambda
L = L$. This is the main reason to choose the interval lengths
$(\ell^{}_{1}, \ldots , \ell^{}_{\nts N})$ for the tiling such that
$\ZZ [\lambda]$ comprises all possible coordinates of our control
points (relative to one of them, which may be placed at $0$ without
loss of generality). We assume that $L$ is optimal relative to the
control point set in the sense that no proper, $\lambda$-invariant
submodule of $L$ comprises all of those points (otherwise, we change
the natural interval lengths so that this is true). Next, we extract
and harvest some intrinsic geometric information from this setting.

\section{Minkowski embedding and internal 
space}\label{sec:Minkowski}

Let us recall the \emph{Minkowski embedding} 
from \cite[Sec.~3.4]{TAO}, now
tailored to $L = \ZZ[\lambda]$. Since $\lambda$ has degree $d$, this
will lead to a lattice $\cL \subset \RR^d$ as follows. There are $r$ real
algebraic conjugates of $\lambda$, and $s$ complex conjugate pairs, so
$d = r + 2 s$, which are defined via the irreducible, monic polynomial
in $\ZZ[x]$ that has $\lambda$ as a root. This polynomial is a factor
of the characteristic polynomial $p$ of $M$ in our case. Consequently,
there are $r\geqslant 1$ real field isomorphisms
$\kappa^{}_{1}, \ldots , \kappa^{}_{r}$, with
$\kappa^{}_{1} = \mathrm{id}$, and $s\geqslant 0$ complex field
isomorphisms $\sigma^{}_{1}, \ldots , \sigma^{}_{\nts s}$, together
with their complex conjugates,
$\overline{\sigma^{}_{1}}, \ldots , \overline{\sigma^{}_{\nts s}}$.  In this
setting, we can define a $\ZZ$-linear mapping
$\varPhi \! : \, \ZZ [\lambda] \xrightarrow{\quad}\RR^d$ by
\[
  x \, \mapsto \, \bigl(x, \kappa^{}_{2} (x),
  \ldots , \kappa^{}_{r} (x), \real(\sigma^{}_{1} (x)),
  \imag(\sigma^{}_{1} (x)), \ldots,
  \real(\sigma^{}_{\nts s} (x)),
  \imag(\sigma^{}_{\nts s} (x))\bigr) ,
\]
which extends to a $\QQ$-linear mapping on the field $\QQ (\lambda)$.
Clearly, each image point is of the form $\varPhi(x)=(x,x^{\star})$
with $x^{\star}\in\RR^{d-1}$. The induced map
$\star\! :\, \QQ (\lambda) \xrightarrow{\quad}\RR^{d-1}$ is called the
\emph{$\star$-map} of the underlying cut and project scheme; compare
\cite[Sec.~7.2]{TAO} or \cite{Bob}. Often, only its restriction to 
$\ZZ [\lambda]$ is named the $\star$-map, but it has a unique extension
to $\QQ (\lambda)$, as used here.

It is a standard result of algebraic number theory
\cite[Sec.~I.5]{Neukirch} that
$\cL\defeq \varPhi (L) = \varPhi \bigl(\ZZ[\lambda]\bigr)$ is indeed a
lattice in $\RR^d$, where we prefer the real version over the
(algebraically) perhaps more natural one with
$\RR^r \! \times \nts\nts \CC^s$ because we will need Fourier
transforms shortly. So, we obtain the following Euclidean \emph{cut
  and project scheme}, or CPS for short; see \cite[Sec.~7.2]{TAO} and
references therein for more.
\begin{equation}\label{eq:CPS}
\renewcommand{\arraystretch}{1.2}\begin{array}{r@{}ccccc@{}l}
   & \RR & \xleftarrow{\;\;\; \pi \;\;\; } 
      & \RR \nts\nts \times \nts\nts \RR^{d-1} & 
        \xrightarrow{\;\: \pi^{}_{\text{int}} \;\: } & \RR^{d-1} & \\
   & \cup & & \cup & & \cup & \hspace*{-1ex} 
   \raisebox{1pt}{\text{\footnotesize dense}} \\
   & \pi (\cL) & \xleftarrow{\;\ts 1-1 \;\ts } & \cL & 
        \xrightarrow{ \qquad } &\pi^{}_{\text{int}} (\cL) & \\
   & \| & & & & \| & \\
   & L & \multicolumn{3}{c}{\xrightarrow{\qquad\qquad\quad\star
       \quad\qquad\qquad}} 
       &  {L_{}}^{\star\nts}  & \\
\end{array}\renewcommand{\arraystretch}{1}
\end{equation}
Here, $\pi$ and $\pi^{}_{\text{int}}$ denote the canonical projections.
Such a CPS is abbreviated as $(\RR, \RR^{d-1}, \cL)$.

To continue, observing that
$\RR^d = \RR \nts\nts \times \nts\nts \RR^{d-1}$, we also need the
linear mapping $Q$ on $\RR^{d-1}$ that is induced by the dilation
$x \mapsto \lambda x$ (acting on the first component) in internal
space. It is immediate from the structure of $\varPhi$ and the CPS
\eqref{eq:CPS} that we get
\begin{equation}\label{eq:def-Q}
  Q \, = \, \diag \bigl(\kappa^{}_{2} (\lambda), \ldots,
  \kappa^{}_{r} (\lambda) \bigr) \oplus \,
  \bigoplus_{i=1}^{s}\begin{pmatrix}
  \real(\sigma^{}_{i}(\lambda)) & - \imag(\sigma^{}_{i}(\lambda)) \\
  \imag(\sigma^{}_{i}(\lambda)) & \real(\sigma^{}_{i}(\lambda))
  \end{pmatrix} .
\end{equation}
Clearly, $Q\in\Mat (d\! - \! 1,\RR)$ is a normal matrix (so,
$[Q^T,Q]=0$) and a contraction, the latter because $\lambda$ is a PV
number, so all its algebraic conjugates lie strictly inside the unit
disk.

Since $\{ 1, \lambda, \ldots , \lambda^{d-1} \}$ is a $\ZZ$-basis of
$\ZZ[\lambda]$, a basis matrix $\cB$ for $\cL$ can be chosen from
here, with columns $\varPhi(\lambda^i)^T$ for
$0\leqslant i \leqslant d-1$.  The \emph{dual matrix},
$\cB^* \defeq (\cB^{-1})^T$, is then a basis matrix of the dual
lattice,
\[
  \cL^* \, \defeq \, \{y\in\RR^{d}: \langle
  x \ts | \ts y\rangle\in\ZZ \text{ for all } x\in\cL\}\ts . 
\]
Clearly, $\dens (\cL^{*}) = \dens (\cL)^{-1} = \lvert  \ts \det
(\cB)\rvert$.

\begin{remark}\label{rem:spec}
  When $L=\ZZ[\lambda]$ is minimal in the sense that the underlying
  point set is contained in $L$, but not in any $\lambda$-invariant
  submodule of it, the corresponding \emph{Fourier module} of the
  point set or tiling can be extracted from the first row of the dual
  basis matrix $\cB^*$ as
\[
  L^{\circledast} \, = \, \big\langle \cB^{*}_{1i}: 1\leqslant
  i\leqslant d\ts \big\rangle_{\ZZ},
\]
which is the projection of $\cL^{*}$ to the first component.

There is an intrinsic way to define $L^{\circledast}$ as follows.
With the Galois isomorphisms $\kappa^{}_{i}$ and $\sigma^{}_{\nts j}$
from above, one defines the number-theoretic \emph{trace}
on $\QQ (\lambda)$ as
\begin{equation}\label{eq:trace}
     \tr (x) \, \defeq \ts \sum_{i=1}^{r} \kappa^{}_{i} (x) 
     \, + \sum_{j=1}^{s} \bigl( \sigma^{}_{\nts j} (x) \, +
     \overline{\sigma^{}_{\nts j}} (x) \bigr) \, = 
     \sum_{i=1}^{r} \kappa^{}_{i} (x) \, + 2
     \sum_{j=1}^{s} \real \bigl( \sigma^{}_{\nts j} (x) \bigr).
\end{equation}
Then, one has
$L^{\circledast} = \{ y \in \QQ (\lambda) : \tr (xy) \in \ZZ \text{
  for all } x \in L \}$, which bypasses the explicit embedding step
employed above, though it is of course equivalent to it.

In our setting, the Abelian group $L^{\circledast}$ is the pure point
part of the dynamical spectrum, in additive notation, for the tiling
dynamical system induced by the inflation rule, where the dynamics is
given by the translation action of $\RR$; see \cite{BL-review} for
background.  \exend
\end{remark}

To continue, in the spirit of \cite{LW}, see also \cite[Ch.~4]{TAO},
we return to the displacement matrix $T$ of $\varrho$. By definition,
when considering the $a_i$ as tiles with natural length $\ell_{i}$ and
left endpoint placed at $0$, one has the \emph{stone inflation} 
(compare \cite[p.~148]{TAO} for the concept)
\[
  \lambda \ts a^{}_i \: =  \bigcup_{1\leqslant j \leqslant N}
  \, \bigcup_{t\in T_{ji}}    t+ a^{}_{j} \ts .
\]
More importantly, $T$ enters the induced inflation action on the point
sets via the iteration
\begin{equation}\label{eq:set-inflation}
  \vL_{i}^{\prime} \: =  \dot{\bigcup_{1\leqslant j \leqslant N}}
   \lambda \vL^{}_{j} + T^{}_{ij} \ts ,
\end{equation}
with a suitable and admissible initial condition, such as the left
endpoints of a legal pair of intervals, one placed at $0$ and the
other at the fitting position to the left. Here, we use ${}'$ to denote
the image under one iteration step. Note that this iteration produces
the control point sets of the corresponding successive tile
inflations.  The dot indicates that the union on the right-hand side
of \eqref{eq:set-inflation} is disjoint, while $+$ stands for the
Minkowski sum of point sets; compare \cite[Sec.~2.1]{TAO}.

\begin{remark}\label{rem:cycle}
  Note that the iteration based on \eqref{eq:set-inflation}, viewed in
  the local topology, need not converge to a single typed point set
  $\vL = \bigcup_{i} \vL_{i}$.  However, via a simple application of
  Dirichlet's pigeon hole principle, one can show convergence
  to a finite cycle of such typed point sets, starting from a fixed,
  admissible (or legal) initial configuration. Each member of this
  cycle is equally well suited to define the (geometric) hull as an
  orbit closure under translations; compare \cite[Chs.~4 and 5]{TAO}
  for details.  \exend
\end{remark}

Under the $\star$-map, \eqref{eq:set-inflation} turns into an
iteration of $N$ finite (and hence closed) point sets in $\RR^{d-1}$,
and thus into an \emph{iterated function system} (IFS) on
$\bigl(\cK \RR^{d-1}\bigr)^{N}$, where $\cK \RR^{m}$ with
\mbox{$m\in\NN$} denotes the space of non-empty, compact subsets of
$\RR^{m}$, equipped with the Hausdorff (metric) topology; see
\cite{BM00} or \cite[Sec.~4.6]{Bernd} and references therein for
background. Here, the multiplication by $\lambda$ is replaced by the
action of the contraction $Q$ from \eqref{eq:def-Q}, giving the
fixed point equations
\begin{equation}\label{eq:win-IFS}
  W^{}_{\nts i} \: =  \bigcup_{1 \leqslant j \leqslant N}
  Q \ts W^{}_{\nts j} + T^{\star}_{ij} \: = 
  \bigcup_{1\leqslant j \leqslant N} \,
  \bigcup_{t\in T_{ij}} Q W^{}_{j} + t^{\star}
\end{equation}
for $1 \leqslant i \leqslant N$. In this step, since the $W_i$ are
compact sets in $\RR^{d-1}$, the union on the right-hand side need no
longer be disjoint.  By Banach's contraction principle, there is a
unique solution to the IFS \eqref{eq:win-IFS} within $\bigl(\cK
\RR^{d-1}\bigr)^N$; see \cite[Thm.~1.1 and Prop.~1.3]{BM00}.  It is a
well-known fact that the compact sets $W_{i}$ can be Rauzy fractals
with complicated topological structure and boundary \cite{ST,Bernd};
see \cite[Sec.~7.4]{PFBook} and references therein for
background. Note that Banach's contraction principle also gives us
that each set $\vL^{\star}_{i}$ lies dense in the set $W_i$, which
plays the role of a window for the CPS \eqref{eq:CPS}, as we shall
exploit shortly.

\begin{remark}\label{rem:dual}
Let us briefly mention that Eq.~\eqref{eq:win-IFS} gives rise to a
\emph{dual inflation}, via multiplying from the left by $Q^{-1}$, 
which is an expansive mapping. This results in
\[
    Q^{-1} W^{}_{\nts i} \, = \bigcup_{1 \leqslant j \leqslant N}
     \bigcup_{t\in T_{ij}} W^{}_{j} + Q^{-1} t^{\star} ,
\]
where $Q^{-1} t^{\star} = (t/\lambda)^{\star}$. Iterating this rule in
internal space either leads to a tiling or to a multiple cover of
internal space, where the covering degree is constant almost
everywhere. This follows from \cite[Cor.~5.81]{Bernd}, which extends
an earlier idea from \cite{ItoRao}.  \exend
\end{remark}

Next, we recall a well-known result about the solution of the IFS
\eqref{eq:win-IFS}, which can be seen as a special case of
\cite[Prop.~4.99]{Bernd}. Since it is of crucial importance to our
further arguments, we also include a proof that is tailored to our
setting. The latter considers primitive inflation tilings of $\RR$,
with a PV unit $\lambda$ as inflation factor and natural lengths of
the $N$ intervals chosen such that all control point positions lie in
$\ZZ[\lambda]$, but in no proper, $\lambda$-invariant submodule of it.

\begin{lemma}\label{lem:disjoint}
  Under our general assumptions, the solution\/
  $(W^{}_{1}, \ldots , W^{}_{\! N})$ to the IFS\/ \eqref{eq:win-IFS}
  is row-wise measure-disjoint, which means that, for each
  $1\leqslant i \leqslant N$, any two distinct sets on the right-hand
  side of\/ \eqref{eq:win-IFS} intersect at most in a Lebesgue-null
  set.

  Moreover, there is a number\/ $\eta>0$ such that\/
  $\vol (W_i) = \eta \ts v^{}_{i}$ holds for all\/
  $1\leqslant i \leqslant N$, where\/ $v^{}_{i}$ is the relative
  frequency of the tiles of type\/ $i$.
\end{lemma}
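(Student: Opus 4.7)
The plan is to treat the vector of window volumes
$w = \bigl( \vol(W^{}_{1}), \ldots, \vol(W^{}_{\nts N}) \bigr)^{T}$ as a
non-negative candidate right eigenvector of $M$ at $\lambda$ and to
extract row-wise measure-disjointness from the equality case of a
Perron--Frobenius argument. A preparatory computation from the block
form of $Q$ in \eqref{eq:def-Q} shows that $\lvert \det Q \rvert$
equals the absolute value of the product of all Galois conjugates of
$\lambda$ other than $\lambda$ itself; since $\lambda$ is a PV
\emph{unit}, its field norm has absolute value $1$, and one obtains
$\lvert \det Q \rvert = 1/\lambda$.

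Applying Lebesgue measure to~\eqref{eq:win-IFS} and using
subadditivity for the (possibly overlapping) union on its right-hand
side yields
\[
   w^{}_{i} \, \leqslant \, \lvert \det Q \rvert \ts \sum_{j=1}^{N}
   M^{}_{ij} \ts w^{}_{\nts j} \, = \, \lambda^{-1} (M w)^{}_{i} \ts ,
\]
that is, $M w \geqslant \lambda w$ componentwise. Pairing with the
strictly positive left PF eigenvector $\langle u \ts |$ from
Section~\ref{sec:infl} gives
$\langle u \ts | \ts (Mw - \lambda w) \rangle = 0$; since all entries
of $\langle u \ts |$ are strictly positive and
$Mw - \lambda w \geqslant 0$ componentwise, this forces
$M w = \lambda w$. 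By simplicity of the PF eigenvalue, $w$ must be a
non-negative scalar multiple of $| \ts v \rangle$, so
$w = \eta \ts | \ts v \rangle$ with $\eta \geqslant 0$. The identity
$M w = \lambda w$ then says that the subadditivity estimate above is
saturated in every row~$i$, which by the standard equality case of
subadditivity is precisely the statement that, for each $i$, any two
distinct translates $Q W^{}_{\nts j} + t^{\star}$ appearing in
\eqref{eq:win-IFS} intersect only on a Lebesgue null set.

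It remains to rule out $\eta = 0$. Here one uses that
$\vL^{}_{i} \subset \{x \in L : x^{\star} \in W^{}_{\nts i}\}$, because
$\vL_{i}^{\star}$ is dense in $W^{}_{\nts i}$; the optimality
assumption on $L$ makes $L^{\star}$ dense in $\RR^{d-1}$, so the
standard uniform-distribution result for the CPS~\eqref{eq:CPS} yields
$\dens\bigl(\{x \in L : x^{\star} \in W^{}_{\nts i}\}\bigr) =
\dens(\cL) \ts \vol(W^{}_{\nts i})$. Combining this with
Remark~\ref{rem:geo} gives
$\vol(W^{}_{\nts i}) \geqslant v^{}_{i}/(\alpha \ts \dens(\cL)) > 0$,
and hence $\eta = 1/(\alpha \ts \dens(\cL)) > 0$, as required.

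The algebraic half of the argument is clean: the PV-unit hypothesis
pins down $\lvert \det Q \rvert = 1/\lambda$, and the
Perron--Frobenius equality case then forces proportionality to
$| \ts v \rangle$ and row-wise measure-disjointness simultaneously.
I expect the delicate step to be the positivity of $\eta$: the IFS
and subadditivity alone yield only $w \geqslant 0$, so ruling out
$w = 0$ genuinely requires the CPS uniform-distribution input, which
is the only place where the argument leaves the purely
linear-algebraic realm.
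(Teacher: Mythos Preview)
Your argument follows essentially the same route as the paper: subadditivity of Lebesgue measure applied to the IFS gives $Mw \geqslant \lambda w$, Perron--Frobenius theory forces equality and proportionality $w = \eta\,|\ts v\rangle$, and saturation of the subadditivity bound yields row-wise measure disjointness. Your pairing with the strictly positive left PF eigenvector $\langle u\ts|$ to deduce $Mw=\lambda w$ is a clean and slightly more explicit alternative to the paper's direct appeal to \cite[Thm.~1.1]{Seneta}, and has the mild advantage of not requiring $w>0$ up front.

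There is, however, a gap in your positivity step. You invoke the \emph{equality} $\dens\bigl(\{x\in L: x^{\star}\in W_i\}\bigr)=\dens(\cL)\,\vol(W_i)$ as a ``standard uniform-distribution result,'' but that equality is only available for \emph{regular} model sets, i.e.\ windows with $\vol(\partial W_i)=0$. At this point in the argument the $W_i$ are merely compact; boundary regularity is established only afterwards (Proposition~\ref{prop:disjoint}), so using it here would be circular. What you actually need --- and what the paper uses, citing \cite[Prop.~3.4]{HuRi} for weak model sets --- is the one-sided estimate $\underline{\dens}\bigl(\oplam(W_i)\bigr)\leqslant \dens(\cL)\,\vol(W_i)$, valid for any compact window. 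This already suffices: combined with $\vL_i\subseteq\oplam(W_i)$ and $\dens(\vL_i)=v_i/\alpha>0$, it gives $\vol(W_i)>0$. Relatedly, your final assertion $\eta = 1/(\alpha\,\dens(\cL))$ overclaims: the argument only yields $\eta \geqslant 1/(\alpha\,\dens(\cL))$, and the examples in Section~\ref{sec:twisted} with covering degree $\mc>1$ show that strict inequality does occur.
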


\begin{proof}
  All $W_i$ are compact sets, hence measurable, with
  $\vol (W^{}_{\nts i}) \geqslant 0$. Let $\vL_{i}$ be the set of
  control points of type $i$ for one of the typed point sets
  $\vL = \dot\bigcup_{i} \vL_{i}$ that emerge from the limit cycle of the
  iteration \eqref{eq:set-inflation}, as explained in
  Remark~\ref{rem:cycle}.  By construction, due to the properties of
  the $\star$-map, we then know that
\[
  \vL_{i} \,\subseteq\, \oplam (W_{i}) \,\defeq\, 
  \{ x \in L : x^{\star} \in W_{i} \}\ts , 
\]
where $\vL_{i}$ is linearly repetitive, with
  $\dens (\vL_{i}) = v_{i} \dens (\vL) > 0$. Consequently, by invoking
  \cite[Prop.~3.4]{HuRi}, which is an extension of the density result
  \cite[Thm.~1]{Martin1} for regular model sets to the more general
  setting of weak model sets, we know that
\[
  0 \, < \, \dens (\vL_{i}) \, \leqslant \,
  \underline{\dens} \bigl(\oplam(W_{i}) \bigr)
  \, \leqslant \, \dens (\cL) \ts \vol (W_{i}) \ts ,
\]
where $\underline{\dens}$ refers to the (always existing) lower
density of a point set. This estimate means that we have
$\vol (W_{i}) > 0$ for all $1 \leqslant i \leqslant N$.
   
Now, due to a potential overlap of sets on the right-hand side of
Eq.~\eqref{eq:win-IFS}, one has
\begin{equation}\label{eq:PF-estimate}
   \vol (W^{}_{\nts i}) \, \leqslant \, \lvert \det (Q) \rvert 
   \sum_{j=1}^{N} \card (T^{}_{ij})\ts \vol (W^{}_{\nts j}) \ts ,
   \quad \text{for } 1 \leqslant i \leqslant N .
\end{equation}
Observing $\lvert \det (Q) \rvert = \lambda^{-1}$ and
$\card(T^{}_{ij}) = M^{}_{ij}$, this amounts to the vector inequality
\[
     M |w\rangle \, \geqslant \, \lambda \ts |w \rangle \ts ,
\]  
where $|w\rangle$ denotes the vector with entries
$\vol (W^{}_{\nts i})$ and the inequality holds for each
component. Since $\lambda$ is the PF eigenvalue of $M$, which is
primitive, and all entries of $|w\rangle$ are positive, we see that
$|w\rangle$ is a positive multiple of the right PF eigenvector of $M$;
compare \cite[Thm.~1.1]{Seneta} and its proof, which we need not
repeat here.

This means we have equality in \eqref{eq:PF-estimate}, which implies
the first claim, while the second is a consequence of $|\ts w \rangle$
being proportional to $|\ts v \rangle$.
\end{proof}

All mappings that occur in our IFS \eqref{eq:win-IFS} are of the form
$x\mapsto Qx+u$, and hence homeomorphisms of $\RR^{d-1}$.  Invoking
parts (i) and (iii) of \cite[Prop.~4.99]{Bernd}, one obtains the
following improvement of Lemma~\ref{lem:disjoint}.

\begin{prop}\label{prop:disjoint}
  Let\/ $(W^{}_{1}, \ldots , W^{}_{\! N})$ be the unique solution to the
  contractive IFS\/ \eqref{eq:win-IFS}. Then, under our assumptions, each\/
  $W_i \subset \RR^{d-1}$ is a perfect, topologically regular set of
  positive Lebesgue measure. Moreover, each boundary\/ $\partial W_i$
  has Lebesgue measure $0$.  \qed
\end{prop}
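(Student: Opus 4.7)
My plan is to upgrade Lemma~\ref{lem:disjoint} to Proposition~\ref{prop:disjoint} by combining its row-wise measure-disjointness with the self-similar structure of the IFS~\eqref{eq:win-IFS}, following parts (i) and (iii) of \cite[Prop.~4.99]{Bernd}. The four claims are interconnected, and I would establish them in the order \emph{non-empty interior}, \emph{topological regularity}, \emph{zero-measure boundary}, \emph{perfect}.

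The first and crucial step is to promote positive volume to non-empty interior. Since $\vol(W_i) > 0$ for every $i$ by Lemma~\ref{lem:disjoint}, the Lebesgue density theorem supplies a density point $x_0$ in some $W_i$. Iterating~\eqref{eq:win-IFS} gives a descending chain of contracted pieces $Q^n W_{j_n} + \tau_n$ containing $x_0$, whose diameters tend to $0$ because $Q$ is a contraction. Applying the expansive inverse $Q^{-n}$ with the corresponding back-shift produces a sequence of blow-ups, each contained in some $W_{j_n}$, which retains the density-$1$ property at the image of $x_0$; along a Hausdorff-convergent subsequence, the limit contains an open neighbourhood of a point in some $W_j$. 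Primitivity of $M$ then propagates non-empty interior from this $W_j$ to every component via~\eqref{eq:win-IFS}.

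With non-empty interiors in hand, I would verify that $(V_1, \ldots, V_N)$ with $V_i := \overline{\mathrm{int}(W_i)} \in \cK\RR^{d-1}$ is another fixed point of the contractive IFS~\eqref{eq:win-IFS}. The key observation is that measure-disjointness upgrades to genuine disjointness of the open sets $Q\,\mathrm{int}(W_j) + t^{\star}$: two open sets intersecting in any non-empty open ball would carry positive Lebesgue mass, contradicting Lemma~\ref{lem:disjoint}. Closing up this disjoint open union and invoking the homeomorphism $x \mapsto Qx + t^\star$ yields $V_i = \bigcup_{j,t} Q V_j + t^\star$. By Banach's contraction principle applied to $(\cK\RR^{d-1})^N$ with the Hausdorff metric, $V_i = W_i$, which is exactly topological regularity. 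The complementary volume identity then combines with the PF argument of Lemma~\ref{lem:disjoint}, now applied to the vector $|\tilde w\rangle := (\vol(\mathrm{int}(W_i)))_i$ satisfying $M|\tilde w\rangle \leq \lambda|\tilde w\rangle$, to force $|\tilde w\rangle = |w\rangle$, whence $\vol(\partial W_i) = \vol(W_i) - \vol(\mathrm{int}(W_i)) = 0$.

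Perfectness is then automatic: $\mathrm{int}(W_i)$ is a non-empty open subset of $\RR^{d-1}$ and so has no isolated points, and topological regularity ensures every boundary point of $W_i$ is accumulated by interior ones. The main obstacle I anticipate is the first step: positive Lebesgue measure alone does not imply non-empty interior (as fat Cantor sets demonstrate), and circumventing this requires the full force of the PV-unit hypothesis through the exact numerical matching $\vol(W_i) = \lambda^{-1} \sum_j M_{ij}\,\vol(W_j)$ of Lemma~\ref{lem:disjoint}. This matching ensures that the IFS attractor tiles at full dimension $d-1$ rather than collapsing onto a proper fractal, and it is precisely what allows the density-point blow-up to land in an open set rather than in a measure-theoretically thin one.
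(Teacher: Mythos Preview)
The paper gives no self-contained proof here; it simply records, in the sentence preceding the proposition, that the claim follows from \cite[Prop.~4.99\,(i),(iii)]{Bernd} and then places a \qed. Your plan explicitly follows that same reference, so there is no divergence of approach to compare --- only the question of whether your unpacking of the cited argument is sound.

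Your sketch is largely on track, but the step for $\vol(\partial W_i)=0$ does not close. From the genuine disjointness of the open pieces $Q\ts\mathrm{int}(W_j)+t^\star$ inside $\mathrm{int}(W_i)$ you correctly obtain $M|\tilde w\rangle\leqslant\lambda\ts|\tilde w\rangle$, and pairing with the positive left eigenvector $\langle u\ts|$ forces equality, hence $|\tilde w\rangle=\eta'|\ts v\rangle$ for some $\eta'>0$. But this is only \emph{proportionality}, not the equality $|\tilde w\rangle=|w\rangle$ you assert: nothing so far rules out $\eta'<\eta$, i.e.\ $\vol(\partial W_i)=(\eta-\eta')\ts v_i>0$ for all $i$ simultaneously. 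Topological regularity does not rescue this, since a regular closed set can still have boundary of positive Lebesgue measure. The missing ingredient is a second density argument: every level-$n$ piece $P=Q^nW_j+\tau\subseteq W_i$ satisfies $\vol\bigl(P\cap\mathrm{int}(W_i)\bigr)/\vol(P)\geqslant\eta'/\eta>0$, because $Q^n\ts\mathrm{int}(W_j)+\tau$ is open and contained in $W_i$; since the level-$n$ pieces measure-tile $W_i$ with diameters tending to $0$, a Lebesgue density point of $\partial W_i$ would then see a fixed positive proportion of $\mathrm{int}(W_i)$ in arbitrarily small balls, contradicting density $1$ for $\partial W_i$. A smaller issue: in your regularity step you only justify the inclusion $\bigcup_{j,t}QV_j+t^\star\subseteq V_i$, not the equality you state; however, this one-sided inclusion already suffices, since iterating the contraction from $V_i\subseteq W_i$ with $F(V)_i\subseteq V_i$ gives $W_i=\lim_n F^n(V)_i\subseteq V_i$.
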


In view of this result, all $\oplam(W_{i})$ are \emph{regular model
  sets} for the cut and project scheme $(\RR , \RR^{d-1} , \cL)$, see
\cite{Bob,TAO} for background, hence also $\oplam(W)$ with
$W\! =\bigcup_{i} W_i$. The corresponding dynamical system
$(\XX,\RR)$, where $\XX$ is the orbit closure of $\oplam (W)$ in the
local topology and $\RR$ acts by translation, has pure point spectrum,
both in the diffraction and in the dynamical sense; see
\cite{TAO,BL,BL-review} and references therein. The same property
holds for the systems built from the translation orbit closure of any
of the $\oplam(W_{i})$.

\begin{remark}\label{rem:model}
  If we consider the weighted Dirac comb $\omega = \sum_{i}h_{i}\,
  \delta^{}_{\!\vL_{i}}$ with $h_{i}\in\CC$, the Bombieri{\ts}--Taylor
  (or consistent phase) property for primitive 
  inflation rules \cite[Thm.~3.23 and
    Rem.~3.24]{BGM} holds, which is an extension of the results of
  \cite{Daniel} to the typed point sets emerging from a primitive
  inflation rule. This implies the existence of coefficients $A_{i}
  (k)$, called scattering or diffraction \emph{amplitudes}, such that
  $\omega$ has the \emph{diffraction measure}
\begin{equation}\label{eq:intens}
    \widehat{\gamma^{}_{\omega}}\, = 
    \sum_{k\in L^{\circledast}} I(k)\, \delta^{}_{k}\quad\text{with}\quad
    I(k) \, = \, \Bigl| \textstyle{\sum\limits_{i}}\,
    h_{i} \, A_{i}(k) \Bigr|^{2} ,
\end{equation}
where the Fourier module $L^{\circledast}$ is the projection of 
$\cL^{*}$ into $\RR$ as in Remark~\ref{rem:spec}.

Now, assume in addition that $\oplam(W^{\circ}_{i}) \subseteq
\vL^{}_{i} \subseteq \oplam (W^{}_{i})$ holds for all $1\leqslant i
\leqslant N$, with the $W_i$ from the solution of
\eqref{eq:win-IFS}. Then, due to the inflation origin, our typed point
set $\vL = \dot{\bigcup}_{i} \vL_{i}$ consists of disjoint, regular
model sets. Consequently, by the general theory of model sets
\cite{Bob,TAO}, the amplitudes $A_{i} (k)$ for $k\in L^{\circledast}$
are given by
\begin{equation}\label{eq:ampli1}
     A_{i}(k) \, = \, \frac{\dens(\vL_{i})}{\vol(W_i)} \, 
     \widecheck{1^{}_{W_i}}(k^{\star}) \, = \, 
     \frac{\dens(\vL)}{\vol(W)} \, 
     \widecheck{1^{}_{W_i}}(k^{\star}) \ts ,
\end{equation}
where $1^{}_{\nts K}$ is the characteristic function of $K$ and
$\widecheck{.}$ denotes inverse Fourier transform. For all other $k$,
one has $A_{i} (k) = 0$.  As we shall see later, a more general
connection is possible via the FB coefficients of $\vL$ and
its subsets; see Eq.~\eqref{eq:FB} below for more.  Note that the
validity of \eqref{eq:ampli1} is a consequence of the uniform
distribution of $\vL^{\star}_{i}$ in $W^{}_{\! i} \ts $; compare the
detailed discussions in \cite[Sec.~7.1]{TAO} and
\cite{Martin1,Bob-ufd}. \exend
\end{remark}

Though Eq.~\eqref{eq:ampli1} looks nice, it is generally difficult to
calculate $\widecheck{1^{}_{W_i}}$ directly, due to the potentially
fractal nature of the window boundaries.  Let us thus turn to an
alternative approach of transfer matrix type that harvests the
inflation nature of our point sets.  In view of
Lemma~\ref{lem:disjoint}, Eq.~\eqref{eq:win-IFS} can now be 
rewritten as
\begin{equation}\label{eq:charwin}
     1^{}_{W_i} \, = \, \sum_{j=1}^{N} \sum_{t\in T_{ij}}
     1^{}_{Q \ts W_{\nts j} + t^{\star}} \ts ,
\end{equation}
to be understood in the Lebesgue sense (rather than pointwise).

\begin{theorem}\label{thm:cover}
  Let\/ $(W^{}_{1},\ldots,W^{}_{\! N})\in (\cK\RR^{d-1})^N$ be the
  unique solution to the contractive IFS~\eqref{eq:win-IFS}. Then,
  Eq.~\eqref{eq:charwin} holds in the Lebesgue sense for every\/
  $1\leqslant i\leqslant N$.

  Moreover, when considering the compact set\/
  $W\! =\bigcup_{i}W_{i}$, one has
\[
  \mf (y) \, \defeq  \sum_{i=1}^{N} 1^{}_{W_{i}}(y)
  \, = \, \mf (y)\, 1^{}_{W}(y) \ts ,
\]
where\/ $\ts\mf$ is a measurable, integer-valued function on\/
$\RR^{d-1}$ with\/ $\ts\supp (\mf) = W\nts\nts$. In particular, the
potential values on\/ $W$ are restricted to\/ $\{1,2,\ldots,N\}$.
\end{theorem}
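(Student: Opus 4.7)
The plan is that this theorem is largely a functional-analytic repackaging of Lemma~\ref{lem:disjoint}, so no new machinery is required. First, I would dispose of \eqref{eq:charwin}. Lemma~\ref{lem:disjoint} states that, for each fixed $i$, the sets $Q W^{}_{\nts j} + t^{\star}$ appearing on the right-hand side of \eqref{eq:win-IFS}, with $j$ ranging over $\{1,\ldots,N\}$ and $t\in T_{ij}$, are pairwise Lebesgue-measure-disjoint. A standard inclusion--exclusion argument for indicators of finitely many sets with pairwise null intersections then shows that the indicator of their union agrees with the sum of their indicators outside a Lebesgue-null set. Combining this observation with the set-theoretic identity \eqref{eq:win-IFS} yields \eqref{eq:charwin} in the Lebesgue sense, for every $1\leqslant i \leqslant N$.

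For the second claim, the function $\mf$ is, by its very definition, a finite sum of indicators of compact (hence Borel) subsets of $\RR^{d-1}$, so it is automatically Borel-measurable and pointwise integer-valued, with $0 \leqslant \mf (y) \leqslant N$ for every $y\in\RR^{d-1}$; in fact, $\mf(y)$ simply counts the number of windows $W^{}_{\nts i}$ containing $y$. Since $\mf (y) > 0$ holds if and only if $y\in W^{}_{\nts i}$ for at least one $i$, equivalently $y\in W$, the identity $\mf = \mf \cdot 1^{}_{W}$ follows pointwise. Because $W$ is a finite union of compact sets, hence compact, one moreover obtains $\supp (\mf) = W$. Restricting to $W$ itself, at least one indicator contributes, so $\mf (y) \in \{1,2,\ldots, N\}$ there, as claimed.

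The only point I would watch is the interplay between pointwise and almost-everywhere statements: the assertions about $\mf$ are valid pointwise on $\RR^{d-1}$, whereas \eqref{eq:charwin} holds only in the Lebesgue sense, and neither statement should be confused with the other. Apart from this bookkeeping, there is no genuine obstacle, since all the substantive work---the positivity of $\vol (W^{}_{\nts i})$ via the density estimate for weak model sets, and the row-wise measure-disjointness of the IFS union---has already been carried out in the proof of Lemma~\ref{lem:disjoint}.
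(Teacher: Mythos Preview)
Your proposal is correct and follows essentially the same approach as the paper: invoke Lemma~\ref{lem:disjoint} for the a.e.\ validity of \eqref{eq:charwin}, then read off the elementary pointwise properties of $\mf$ from its definition as a finite sum of indicators of compact sets. Your write-up is simply a more explicit version of the paper's two-sentence argument, with the added (and apt) caveat about distinguishing the pointwise claims for $\mf$ from the Lebesgue-sense equality in \eqref{eq:charwin}.
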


\begin{proof}
  The validity of \eqref{eq:charwin} in the Lebesgue sense is clear
  from Lemma~\ref{lem:disjoint}.

  Since all $W_{\nts i}$ are compact, the function $\mf$ is well defined
  for all $y\in W$, and clearly $0$ outside of $W\!$. This means that
  we have $\mf \, 1^{}_{W} = \mf$ on all of $\RR^{d-1}$, while all
  remaining claims of the theorem are now immediate.
\end{proof}

\begin{remark}\label{rem:win-cover}
  Under some additional conditions, the function $\mf$ is constant for
  almost every\/ $y\in W\!$, with integer value\/ $\mc$.  In this case,
  one has $\sum_{i=1}^{N} \vol (W_i) = \mc \vol (W)$. This situation
  happens whenever the inflation defines a model set, where
  $\mc=1$. Beyond this case, one can have integer values of $\mc$,
  possibly up to $N\nts -1$.

  In general, however, the function $\mf$ need \emph{not} be constant
  almost everywhere in the total window $W\!$, as we shall see in the
  example of Eq.~\eqref{eq:winpic} in Section~\ref{sec:twisted}. This
  seems a significant difference to the covering degree of internal
  space by the dual inflation mentioned in Remark~\ref{rem:dual}. We
  shall return to this point in Section~\ref{sec:FB}. \exend
\end{remark}

Let us now switch to an analysis of the system \eqref{eq:charwin} of
equations after (inverse) Fourier transform, which turns it into a
rescaling equation for an $N$-tuple of continuous functions.

\section{Analysis of internal cocycle}\label{sec:cocycle}

Let us first recall a simple, but in our context vital, result on the
inverse Fourier transform of characteristic functions, which we prove
for convenience.

\begin{lemma}\label{lemma:affine}
  Let\/ $m\in\NN$ be fixed. Let\/ $K\subset \RR^m$ be compact and\/
  $Q\in \GL(m,\RR)$.  Then, one has the relation
\[
    \widecheck{1^{}_{Q K \nts + t}} (y) \, = \,
    \lvert\det (Q)\rvert \, \ee^{2 \pi \ii \langle t \ts | \ts y \rangle}
    \,\widecheck{1^{}_{\nts K}} (Q^T y) \ts ,
\]
which holds for all\/ $t, y \in\RR^m$, with continuity in both
variables.  
\end{lemma}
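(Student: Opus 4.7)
The plan is to unfold the inverse Fourier transform as an integral over $\RR^m$ and perform a linear change of variables. Explicitly, I would start from
\[
   \widecheck{1^{}_{QK+t}}(y) \, = \int_{\RR^m} 1^{}_{QK+t}(x) \, \ee^{2\pi\ii \langle x \ts | \ts y\rangle} \dd x
\]
and observe that $1^{}_{QK+t}(x) = 1^{}_{K}\bigl(Q^{-1}(x-t)\bigr)$, which is legitimate because $Q\in \GL(m,\RR)$ is invertible. Then I would substitute $x = Q u + t$, so that $\dd x = \lvert\det(Q)\rvert\dd u$, and split the resulting exponential as $\ee^{2\pi\ii\langle Qu+t \ts |\ts y\rangle} = \ee^{2\pi\ii\langle t \ts | \ts y\rangle}\ts\ee^{2\pi\ii\langle Qu \ts | \ts y\rangle}$. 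Using the standard adjoint identity $\langle Qu \ts | \ts y\rangle = \langle u \ts | \ts Q^T y\rangle$ on $\RR^m$, pulling the factor $\lvert\det(Q)\rvert\ts\ee^{2\pi\ii\langle t \ts | \ts y\rangle}$ out of the integral gives precisely the claimed identity, the remaining integral being $\widecheck{1^{}_{\nts K}}(Q^T y)$.

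For the continuity assertion, I would note that, since $K$ is compact and hence $1^{}_{K}$ is in $L^1(\RR^m)$ (indeed, of compact support), its inverse Fourier transform $\widecheck{1^{}_{K}}$ is continuous on $\RR^m$ by the Riemann--Lebesgue type argument (or directly by dominated convergence, using $|1^{}_{K}(u)\ts \ee^{2\pi\ii \langle u \ts | \ts Q^T y\rangle}| = 1^{}_{K}(u) \in L^1$). Composition with the continuous map $y \mapsto Q^T y$ preserves continuity in $y$, and the prefactor $\ee^{2\pi\ii\langle t \ts | \ts y\rangle}$ is jointly continuous in $(t,y)$, giving joint continuity of the right-hand side.

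There is no real obstacle here: the statement is a textbook transformation formula for the Fourier transform under an affine change of variables, and the only care needed is to track the Jacobian $|\det(Q)|$, the translation phase factor $\ee^{2\pi\ii\langle t \ts | \ts y\rangle}$, and the transpose $Q^T$ appearing in the frequency argument. The compactness of $K$ is used only to guarantee that $\widecheck{1^{}_{K}}$ is a well-defined continuous function on all of $\RR^m$, so that the pointwise identity makes sense everywhere and the continuity claim is immediate.
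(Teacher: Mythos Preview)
Your proof is correct and follows essentially the same route as the paper: both perform the affine change of variables $x = Qu + t$, use $\langle Qu \,|\, y\rangle = \langle u \,|\, Q^T y\rangle$, and then invoke that the Fourier transform of an $L^1$-function is continuous (the paper cites \cite[Thm.~IX.7]{RS}) for continuity in $y$, with continuity in $t$ being obvious. The only cosmetic difference is that the paper writes the integral directly over $QK+t$ rather than over $\RR^m$ with the indicator inside.
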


\begin{proof}
  With the change of variable $x = Q u + t$, one finds
\[
\begin{split}
  \widecheck{1^{}_{Q K \nts + t}} (y) \, & = 
  \int_{QK+t} \ee^{2 \pi \ii \langle x \ts |\ts y \rangle} \dd x
   = \, \lvert \det (Q)\rvert \, \ee^{2 \pi \ii \langle t \ts | \ts y \rangle}
  \! \int_{K} \ee^{2 \pi \ii \langle Qu \ts |\ts y \rangle} \dd u   \\[2mm]
  & = \, \lvert \det (Q)\rvert \, \ee^{2 \pi \ii \langle t \ts | \ts y \rangle}
  \! \int_{K} \ee^{2 \pi \ii \langle u \ts |\ts Q^T y \rangle} \dd u 
  \, = \, \lvert\det (Q)\rvert \,
     \ee^{2 \pi \ii \langle t \ts | \ts y \rangle}
      \,\widecheck{1^{}_{\nts K}} (Q^T y) \ts .
\end{split} 
\]
Continuity in $y$ follows from the Fourier transform of an
$L^{1}$-function being continuous, see \cite[Thm.~IX.7]{RS}, while
continuity in $t$ is obvious.
\end{proof}

Let $Q$ now be the linear map from \eqref{eq:def-Q} in internal 
space $\RR^{d-1}$ that is induced by the dilation 
$x \mapsto \lambda x$ in direct space, $\RR$. Set
$f^{}_{i}(y)\defeq \widecheck{1^{}_{W_{i}}}(y)$ and consider the
vector of functions
$|f(y)\rangle = | f^{}_{1} (y), \ldots , f^{}_{N} (y) \rangle $.
Also, let $\sB (y)$ be the \emph{internal Fourier matrix} that emerges
from the (inverse) Fourier transform of the $\star$-image of the
displacement matrix $T$, that is,
\[
    \sB^{}_{ij} (y) \, = \sum_{x\in
    T_{ij}} \ee^{2 \pi \ii \langle x^{\star} | \ts y \rangle} ,
\]
with $y\in\RR^{d-1}$. The matrix elements are again trigonometric
polynomials, this time generally multivariate, where one still has
\[
    \bigl| \sB_{ij} (y) \bigr| \, \leqslant \, M_{ij}
\]
for all $i,j$ and all $y\in\RR^{d-1}$,
in complete analogy to \eqref{eq:max-B}. With
Lemma~\ref{lemma:affine} and $\lvert \det (Q) \rvert = \lambda^{-1}$,
one now finds the following result of transfer matrix type via an
elementary computation; compare \cite[Sec.~3.2]{Pohl} for a
mathematically similar structure.

\begin{prop}\label{prop:FT}
Under inverse Fourier transform, Eq.~\eqref{eq:charwin} becomes
\[
     | f (y) \rangle \, = \, \lambda^{-1} \sB (y)\,
     | f (R \ts y) \rangle \ts ,
\]
with\/ $R=Q^T$ and\/ $\sB (y) = \widecheck{\delta^{}_{T^{\star}}} (y)$
as defined above, and all\/ $f^{}_{i}$ continuous.  \qed
\end{prop}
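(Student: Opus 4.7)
The plan is to apply the inverse Fourier transform termwise to Eq.~\eqref{eq:charwin} and use Lemma~\ref{lemma:affine} on each summand. Since each $W_i$ is compact (and in particular of finite Lebesgue measure by Proposition~\ref{prop:disjoint}), the characteristic function $1^{}_{W_i}$ lies in $L^{1}(\RR^{d-1})$, and the same is true for every summand on the right-hand side of \eqref{eq:charwin}. Equality in the Lebesgue sense therefore transfers to equality of the inverse Fourier transforms as elements of $C^{}_{0}(\RR^{d-1})$, so the identity can be read off pointwise in $y$.

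Concretely, I would first take $\widecheck{\,\cdot\,}$ of \eqref{eq:charwin} to obtain
\[
   f^{}_{i}(y) \, = \, \sum_{j=1}^{N} \sum_{t\in T_{ij}}
   \widecheck{1^{}_{Q W_{\nts j}+t^{\star}}}(y) \ts .
\]
Applying Lemma~\ref{lemma:affine} with $K=W_{\nts j}$ and $t$ replaced by $t^{\star}$, and using $\lvert\det(Q)\rvert=\lambda^{-1}$ (which follows from $Q$ carrying the algebraic conjugates of $\lambda$ on its diagonal blocks, whose product of determinants equals $\prod_{i\neq 1}\kappa^{}_{i}(\lambda)\cdot\prod_{j}\lvert\sigma^{}_{j}(\lambda)\rvert^{2}=N(\lambda)/\lambda=\pm\lambda^{-1}$ for a PV unit), each summand becomes
\[
   \widecheck{1^{}_{Q W_{\nts j}+t^{\star}}}(y) \, = \,
   \lambda^{-1}\, \ee^{2\pi\ii\langle t^{\star}|\ts y\rangle}
   \, f^{}_{j}(R\ts y) \ts ,
\]
with $R=Q^T$. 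Summing over $t\in T_{ij}$ collapses the exponential sum to $\sB^{}_{ij}(y)$ by the very definition of the internal Fourier matrix, and summing over $j$ yields the matrix product $\sB(y)\,|f(Ry)\rangle$, giving the stated relation.

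For the continuity claim, each $f^{}_{i}=\widecheck{1^{}_{W_{i}}}$ is the inverse Fourier transform of an $L^{1}$-function and hence lies in $C^{}_{0}(\RR^{d-1})\subset C(\RR^{d-1})$ by the Riemann--Lebesgue lemma (or directly by dominated convergence, exactly as in the proof of Lemma~\ref{lemma:affine}). The right-hand side of the displayed identity is then a finite linear combination of continuous functions multiplied by the trigonometric polynomial entries of $\sB$, which is consistent.

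No serious obstacle is expected here; the only delicate point is the passage from the a.e.\ identity in \eqref{eq:charwin} to a \emph{pointwise} identity of the transforms. This is legitimate because both sides of the transformed equation are continuous functions of $y$, so they agree everywhere once they agree as distributions, which is guaranteed by the $L^{1}$-equality on the windowed side. The remainder of the argument is the elementary application of Lemma~\ref{lemma:affine} and bookkeeping of the exponential factors as entries of $\sB(y)$.
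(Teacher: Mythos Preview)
Your proposal is correct and follows exactly the approach indicated by the paper, which presents this proposition without a detailed proof, stating only that it follows from Lemma~\ref{lemma:affine} together with $\lvert\det(Q)\rvert=\lambda^{-1}$ via an elementary computation. You have simply spelled out that computation and added the routine justification for passing from the a.e.\ identity to the pointwise one.
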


It is clear that $\lim_{y\to 0} \sB(y) = \sB (0) = M$. Moreover, from
the way it was constructed, we know that $R$ is a normal matrix and a
contraction. Consequently, its spectral norm agrees with its spectral
radius, see \cite[Sec.~2.3]{House}, and we have
$\theta \defeq \| R \ts\ts \|^{}_{2} = \rho (R) < 1$.  This leads to
the following property, where we use $\|.\|^{}_{2}$ also for the
$2$-norm of vectors.

\begin{lemma}\label{lem:contra}
  For\/ any\/ $\varepsilon >0$, there exists\/
  $\delta = \delta(\varepsilon) >0$ such that
\[
   \| \sB (R^{m} y) - M \|^{}_{2} \, < \, \theta^{m} \varepsilon
 \]
 holds simultaneously for all\/ $\| y \|^{}_{2} < \delta$ and all\/
 $m\in\NN$.
\end{lemma}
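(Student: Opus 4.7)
The plan is to exploit two facts: first, that each entry of $\sB$ is a finite trigonometric polynomial in $y$ (so $\sB$ is smooth with $\sB(0)=M$), and second, that $R = Q^T$ is normal, so iterating $R$ contracts the $2$-norm exactly at rate $\theta$. Combined, these give a Lipschitz-type bound near $0$ that shrinks geometrically under the iteration $y \mapsto R^m y$.

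More precisely, I would first note that since $\sB(y) = \sum_{i,j}\sum_{t \in T_{ij}} \ee^{2\pi\ii \langle t^\star | y\rangle} E_{ij}$ is entry-wise a finite sum of exponentials, it is $C^\infty$ on $\RR^{d-1}$. Because $\sB(0) = M$, a standard mean-value estimate applied to each entry (on the compact unit ball) yields a constant $C > 0$ such that
\[
  \| \sB(z) - M \|^{}_{2} \, \leqslant \, C \, \| z \|^{}_{2}
  \quad \text{for all } \| z \|^{}_{2} \leqslant 1 \ts .
\]
This is the Lipschitz ingredient near the origin.

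Next, I would use that $R$ is normal, so its spectral ($2$-)norm equals its spectral radius; see \cite[Sec.~2.3]{House}. Consequently, $\| R^m \|^{}_{2} = \rho(R^m) = \rho(R)^m = \theta^m$ for every $m \in \NN$. In particular, for any $y \in \RR^{d-1}$ we have $\| R^m y \|^{}_{2} \leqslant \theta^m \| y \|^{}_{2} \leqslant \| y \|^{}_{2}$, so the iterates stay inside the ball where the Lipschitz estimate applies, provided $\| y \|^{}_{2} \leqslant 1$.

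Putting the two pieces together, for $\| y \|^{}_{2} < \delta$ with $\delta \leqslant 1$ (to be chosen), we obtain
\[
  \| \sB(R^m y) - M \|^{}_{2} \, \leqslant \, C \, \| R^m y \|^{}_{2}
  \, \leqslant \, C \, \theta^m \, \| y \|^{}_{2}
  \, < \, C \ts \theta^m \, \delta \ts .
\]
Choosing $\delta = \delta(\varepsilon) \defeq \min\{1, \varepsilon / C\}$ then yields $\| \sB(R^m y) - M \|^{}_{2} < \theta^m \varepsilon$, uniformly in $m \in \NN$. No step looks genuinely difficult; the only point to watch is the dependence of the Lipschitz constant $C$ on the derivatives of the trigonometric polynomial entries, but since there are only finitely many tiles (and hence finitely many exponentials in each entry), $C$ is manifestly finite.
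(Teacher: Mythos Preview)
Your proof is correct and takes essentially the same approach as the paper: both establish a Lipschitz bound $\|\sB(z)-M\|^{}_{2}\leqslant C\|z\|^{}_{2}$ near the origin and then feed in $\|R^{m}y\|^{}_{2}\leqslant\theta^{m}\|y\|^{}_{2}$ from the normality of $R$. The only cosmetic difference is that the paper derives the Lipschitz constant explicitly via $\lvert\ee^{2\pi\ii\langle x^{\star}|z\rangle}-1\rvert=2\lvert\sin(\pi\langle x^{\star}|z\rangle)\rvert\leqslant 2\pi\lvert\langle x^{\star}|z\rangle\rvert$ together with Cauchy--Schwarz, whereas you package the same estimate as a mean-value bound on the smooth entries of $\sB$.
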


\begin{proof}
Recall that $\| A \|^{}_{2} \leqslant N
\sup_{i,j} \lvert A_{ij}\rvert$ holds for all $A\in \Mat (N,\CC)$.
Observe that
\[
\begin{split}  
  \bigl| \sB^{}_{ij} (R^m y) - M^{}_{ij} \bigr| \, & = \,
  \biggl| \sum_{x\in T_{ij}} \bigl( \ee^{2 \pi \ii \langle x^{\star}
    | \ts R^m y \rangle} - 1 \bigr) \biggr| \, \leqslant \,
  \sum_{x\in T_{ij}} \bigl|  \ee^{2 \pi \ii \langle x^{\star}
    | \ts R^m y \rangle} - 1 \bigr| \\[2mm]
  & = \sum_{x\in T_{ij}} 2 \ts\ts \bigl| \ts \sin \bigl(\pi
  \langle x^{\star} | \ts R^m y \rangle \bigr) \bigr| \,
  \leqslant \, 2 \pi \! \sum_{x\in T_{ij}}  \bigl|\langle x^{\star} |
  \ts R^m y \rangle \bigr| ,
\end{split}
\]
where we have used some trigonometric identities and the fact that
$\lvert \ts \sin (z) \rvert \leqslant \lvert z \rvert$ holds for all
$z\in\RR$. Combining this with the Cauchy--Schwarz inequality
\[
  \bigl|\langle x^{\star} | \ts R^m y \rangle \bigr| \, \leqslant \,
  \| x^{\star} \|^{}_{2} \ts \| R^m y \|^{}_{2} \, \leqslant \,
  \| x^{\star} \|^{}_{2} \, \theta^m \| y \|^{}_{2}
\]
gives the claim by standard arguments.
\end{proof}

Now, for $n\in\NN$, we can define an \emph{internal cocycle} 
from the Fourier matrix via
\[
   \sB^{(n)}(y) \, \defeq \,
   \sB(y) \sB(R \ts y) \cdots \sB(R^{n-1} y) \ts ,
\]
with $\sB^{(1)} = \sB$ and $\sB^{(n)}(0)=M^n$. In analogy to
\eqref{eq:cocycle1}, we now have
\begin{equation}\label{eq:cocycle2}
    \sB^{(n+1)} (y) \, =\,  \sB^{(n)} (y) \, \sB( R^n y) 
  \, = \, \sB(y) \, \sB^{(n)} (Ry) 
\end{equation}
for all $n \geqslant 1$, and also
$\sB^{(n+m)} (y) = \sB^{(n)} (y) \sB^{(m)} (R^n y)$ for all
$m \geqslant 1$ and $n\geqslant 0$, the latter with the convention
$\sB^{(0)} \defeq \one$, which we adopt from now on. Clearly, one has
$\lvert \sB^{(n)}_{ij} (y) \rvert \leqslant (M^{n})^{}_{ij}$, and
Fact~\ref{fact:B-mat} remains valid with $B^{(n)}$ replaced by the
internal cocycle $\sB^{(n)}$.

Next, we want to consider the matrix function defined by
\begin{equation}\label{eq:C-def}
    C(y) \, \defeq \lim_{n\to\infty} \beta^{n} \sB^{(n)}(y) \ts ,
\end{equation}
with $\beta = \lvert \det (R) \rvert$, where $\beta\ts \lambda = 1$
because $\lambda$ is a unit.  We thus need to establish that $C (y)$
is well defined as a limit, for every $y\in\RR^{d-1}$. To this end, we
employ the $2$-norm for vectors and the corresponding operator norm,
both denoted by $\|.\|^{}_{2}$ as before.

\begin{prop}\label{prop:equi}
  The sequence\/
  $\bigl(\beta^n ( \sB^{(n)} (y) - M^n ) \bigr)_{n\in\NN}$ of matrix
  functions is equicontinuous at\/ $y=0$, which is to say that
\[
    \forall \varepsilon > 0 \! : \, \exists \ts \delta = \delta 
    (\varepsilon)>0 \! : \, \forall n\in\NN \! : \,
    \bigl( \| y \|^{}_{2} < \delta \, \Longrightarrow \, \beta^n 
    \| \sB^{(n)} (y) - M^n \|^{}_{2} < \varepsilon \bigr) .
\]     
\end{prop}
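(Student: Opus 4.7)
The plan is to reduce the claim to a telescoping identity that isolates a single factor $\sB(R^k y)-M$ at each step, and then to apply Lemma~\ref{lem:contra} termwise. Writing $\sB^{(n)}(y)=\sB(y)\sB(Ry)\cdots\sB(R^{n-1}y)$ and $M^n=M\cdot M\cdots M$ with $n$ factors each, the standard hybrid expansion gives
\[
  \sB^{(n)}(y)-M^n \: = \sum_{k=0}^{n-1}
  \sB^{(k)}(y)\ts\bigl(\sB(R^k y)-M\bigr)\ts M^{n-1-k} ,
\]
with the convention $\sB^{(0)}=M^0=\one$. Multiplying by $\beta^n=\beta\cdot \beta^k\cdot\beta^{n-1-k}$ and invoking the submultiplicativity of $\|.\|^{}_{2}$, one obtains
\[
  \beta^n \|\sB^{(n)}(y)-M^n\|^{}_{2} \,\leqslant\, \beta \!
  \sum_{k=0}^{n-1}\bigl(\beta^k\|\sB^{(k)}(y)\|^{}_{2}\bigr)\ts
  \|\sB(R^k y)-M\|^{}_{2}\ts
  \bigl(\beta^{n-1-k}\|M^{n-1-k}\|^{}_{2}\bigr).
\]

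Next, the two outer parenthesised factors are uniformly bounded by a common constant $c$ in view of Facts~\ref{fact:projector} and~\ref{fact:B-mat}, after passing from any weakly monotone norm to the equivalent $2$-norm (recall that $\beta=\lambda^{-1}$ because $\lambda$ is a unit). For the middle factor, Lemma~\ref{lem:contra} yields, for any prescribed $\varepsilon'>0$, a radius $\delta=\delta(\varepsilon')>0$ such that $\|\sB(R^k y)-M\|^{}_{2}<\theta^k\varepsilon'$ for all $\|y\|^{}_{2}<\delta$ and all $k\in\NN$, where $\theta=\|R\|^{}_{2}<1$; possibly shrinking $\delta$ via the continuity of $\sB$ at the origin takes care of the boundary case $k=0$ if needed. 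Summing the resulting geometric series in $\theta^k$ produces the uniform bound
\[
  \beta^n \|\sB^{(n)}(y)-M^n\|^{}_{2} \,\leqslant\,
  \frac{\beta\ts c^2}{1-\theta}\,\varepsilon' ,
\]
independent of $n$. Given $\varepsilon>0$, choosing $\varepsilon'=(1-\theta)\varepsilon/(\beta c^2)$ together with the associated $\delta$ then concludes the proof.

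The main technical point is the careful bookkeeping of the $\beta^n$ weight: the whole estimate hinges on the two uniform boundedness results absorbing precisely the factors $\beta^k$ and $\beta^{n-1-k}$, leaving a single surplus $\beta$ and a factor $\|\sB(R^k y)-M\|^{}_{2}$ that decays geometrically in $k$ thanks to the contraction property of $R$. Once this alignment is in place, the remaining argument is routine summation and a standard $\varepsilon$-$\delta$ translation; no deeper input beyond Facts~\ref{fact:projector}, \ref{fact:B-mat} and Lemma~\ref{lem:contra} is required.
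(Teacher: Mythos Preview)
Your proof is correct and follows the same overall strategy as the paper --- a telescoping identity isolating $\sB(R^k y)-M$, then Lemma~\ref{lem:contra} and a geometric series. The difference lies in how the remaining factors are controlled. The paper telescopes with $M^\ell$ on the left and $\sB^{(n-1-\ell)}(R^{\ell+1}y)$ on the right, and then bounds the product of their norms by $\|M\|^{n-1}$. This requires $\|M\|=\lambda$ \emph{exactly}, which holds for the spectral norm only when $M$ is normal; for general $M$, the paper constructs a tailored submultiplicative norm $\|.\|^{}_{K}$ (built from the right PF eigenvector) with $\|M\|^{}_{K}=\lambda$, and reruns the argument there before returning to $\|.\|^{}_{2}$ by equivalence. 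Your version sidesteps this entirely: by splitting the weight as $\beta^n=\beta\cdot\beta^k\cdot\beta^{n-1-k}$ and invoking Facts~\ref{fact:projector} and~\ref{fact:B-mat} directly in the $2$-norm (which is indeed weakly monotone), you only need $\beta^k\|\sB^{(k)}(y)\|^{}_{2}$ and $\beta^{n-1-k}\|M^{n-1-k}\|^{}_{2}$ bounded by a constant, not equal to~$1$. The price is the harmless extra factor $c^2$; the gain is that no case distinction and no auxiliary norm are needed. A small cosmetic point: Fact~\ref{fact:B-mat} already applies to $\|.\|^{}_{2}$, so no ``passing'' between norms is required.
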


\begin{proof}
  Since $M$ is non-negative, the spectral radius of $M^n$ is
  $\rho(M^n) = \lambda^n$, for all $n\in\NN_0$. Let us first consider
  the case that $M$ is normal. Then, we can most easily work with the
  spectral norm, because we have
  $\| M^n \|^{}_{2} = \rho(M^n) = \lambda^n$ for $n\geqslant 0$, hence
  $\| M^n\|^{}_{2} = \| M \|^{n}_{2}$. Also, we get
\[
   \| \sB(y) \|^{}_{2}  \,\leqslant\, 
   \big\| \ts\ts \lvert\sB(y)\rvert \ts\ts \big\|_{2}
   \,\leqslant\, \| M \|^{}_{2}
\]
for all $y$ in this case. This estimate holds because the spectral
norm has the required monotonicity property; compare \cite[Thm.~1]{JN}
or \cite[Exc.~5.6.P42]{HJ}.

Now, harvesting the cocycle property \eqref{eq:cocycle2}, a simple
telescopic argument leads to
\[
   \sB^{(n)}(y)-M^{n} \, = \sum_{\ell=0}^{n-1} M^{\ell} \ts
   \bigl( \sB(R^{\ell}y) - M \bigr) \,\sB^{(n-1-\ell)}(R^{\ell+1}y)
\]
for $n\geqslant 1$, with $\sB^{(0)} = \one$ as above. Via the triangle
inequality, using the above properties, one then finds the estimate
\[
  \| \sB^{(n)} (y) - M^n \|^{}_{2} \, \leqslant \,
  \| M \|^{n-1}_{2} \sum_{\ell=0}^{n-1} \| \sB (R^{\ell} y) - M \|^{}_{2}
  \, = \, \lambda^{n-1} \sum_{\ell=0}^{n-1}
  \| \sB (R^{\ell} y) - M \|^{}_{2} \ts .
\]
For $\varepsilon > 0$ and $\| y \|^{}_{2} < \delta$, with
$\beta = \lambda^{-1}$ and the $\delta$ from Lemma~\ref{lem:contra},
this gives
\[
  \big\| \beta^n \bigl( \sB^{(n)} (y) - M^n \bigr)
  \big\|_{2} \, \leqslant \,
  \beta \sum_{\ell=0}^{n-1} \theta^{\ell} \varepsilon \, \leqslant \,
  \frac{\beta \ts \varepsilon}{1 - \theta}
\]
by a geometric series argument, which establishes the claim when $M$
is normal.

For the general case, we employ a different sub-multiplicative matrix
norm, which depends on $M$ and again satisfies $\| M \| = \rho (M)$,
hence $\| M^n \| \leqslant \| M\|^n = \lambda^n$. Following
\cite[Sec.~2.4]{House}, one such norm can simply be constructed as
follows. Consider the convex body
\[
  K \, = \, \diag (v^{}_{1}, \ldots , v^{}_{N})
  \{ x \in \CC^N : \| x \|^{}_{\infty} \leqslant 1 \} \ts ,
\]
where the $v^{}_i$ are the strictly positive entries of
$|\ts v\rangle$, the (frequency normalised) right PF eigenvector of
$M$, and define
\[
  \| x \|^{}_{v} \, \defeq \, \inf
  \big\{ \alpha>0 : x \in \alpha K \big\}
  \, =  \max_{ 1\leqslant i \leqslant N} 
  \frac{\lvert x_i \rvert}{v_i} \ts .
\]
This is a matrix norm on $\CC^N$ that is \emph{absolute}, so
$\| x \|^{}_{v} = \big\|\ts \lvert x \rvert \ts \big\|_{v}$ for all
$x\in\CC^N$, with $\lvert x \rvert$ denoting the vector with entries
$\lvert x_i \rvert$. Now, let $\| . \|^{}_{K}$ denote the matching
operator norm on $\Mat(N,\CC)$, as defined by
\[
   \| A \|^{}_{K} \, \defeq  \sup_{\| x \|^{}_{v} =1} \| Ax\|^{}_{v} \ts ,
\]
which is sub-multiplicative and satisfies
$\| M \|^{}_{K} = \rho (M) = \lambda$ by construction.  What is more,
it also satisfies the monotonicity property
$\| A \|^{}_{K} \leqslant \big\| \ts \lvert A \rvert \ts \big\|_{K}$
for all $A\in \Mat (N,\CC)$, again by \cite[Thm.~1]{HJ}.
Consequently, we still get
$\| \sB^{(n)} (y) \|^{}_{K} \leqslant \| M^n \|^{}_{K} \leqslant \| M
\|^{n}_{K}$ for all $y\in\CC^{d-1}$ and all $n\in\NN$.

Equipped with this matrix norm, we can repeat our previous telescopic
argument, now leading to the estimate
\[
  \big\| \beta^n \bigl( \sB^{(n)} (y) - M^n \bigr) \big\|_{K}
  \, \leqslant \, \beta \sum_{\ell=0}^{n-1}
  \big\| \sB (R^{\ell}y) - M \big\|_{K} \ts . 
\]
From here, since the vector norms $\|.\|^{}_{2}$ and $\|.\|^{}_{v}$
are equivalent, as are the matrix norms $\| . \|^{}_{2}$ and
$\|.\|^{}_{K}$, we can adjust the choice of
$\delta = \delta (\varepsilon)$ to reach the same conclusion.
\end{proof}

Combining the equicontinuity of $\beta^n \sB^{(n)}(y)$ at $0$ from
Proposition~\ref{prop:equi} with $\sB^{(n)} (0) = M^n$ and
Fact~\ref{fact:projector}, a standard
$2\ts \varepsilon$-argument gives the following consequence.

\begin{coro}\label{coro:projector}
  Let\/ $P$ be the projector from\/ \eqref{eq:def-P} and\/
  $\sB^{(n)} (y)$ the internal cocycle. Then, for all\/
  $\varepsilon > 0$, there exists\/
  $\delta^{\ts\prime}= \delta^{\ts\prime} (\varepsilon)>0$ and\/
  $n^{}_{0} = n^{}_{0} (\varepsilon)$ such that
\[
   \| \beta^n \sB^{(n)} (y) - P \|^{}_{2} \, < \: \varepsilon
\]
holds for all integer\/ $n \geqslant n^{}_{0}$ and all\/ $y\in\RR^{d-1}$ 
with\/ $\| y \|^{}_{2} < \delta^{\ts\prime}$.  \qed
\end{coro}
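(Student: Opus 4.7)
The plan is to deduce the corollary from a two-term triangle-inequality decomposition, namely
\[
   \beta^n \sB^{(n)}(y) - P \; = \; \beta^n \bigl( \sB^{(n)}(y) - M^n \bigr)
   \, + \, \bigl( \beta^n M^n - P \bigr),
\]
and control the two summands separately, using precisely the two inputs quoted in the statement.

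First, I would apply Proposition~\ref{prop:equi} with $\varepsilon/2$ in place of $\varepsilon$, which yields a $\delta^{\ts\prime} = \delta(\varepsilon/2) > 0$ such that
$\beta^n \| \sB^{(n)}(y) - M^n \|^{}_{2} < \varepsilon/2$ holds simultaneously for all $n\in\NN$ and all $y\in\RR^{d-1}$ with $\|y\|^{}_{2} < \delta^{\ts\prime}$. The crucial point here is that the $n$-uniformity of this bound is already built into equicontinuity, so $\delta^{\ts\prime}$ does not depend on $n$. For $n=0$ the estimate is trivial since $\sB^{(0)} = \one = M^{0}$, so we may include $n=0$ in the conclusion as well.

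Next, I would apply Fact~\ref{fact:projector} with $\beta = \lambda^{-1}$: since $\beta^n M^n \to P$ as $n\to\infty$ (in any matrix norm, by equivalence of norms on $\Mat(N,\RR)$), there exists $n^{}_{0} = n^{}_{0}(\varepsilon)\in\NN$ with
$\| \beta^n M^n - P \|^{}_{2} < \varepsilon/2$ for every $n \geqslant n^{}_{0}$. Note that this choice is completely independent of $y$.

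Combining both bounds via the triangle inequality then gives
$\| \beta^n \sB^{(n)}(y) - P \|^{}_{2} < \varepsilon$
for all $n \geqslant n^{}_{0}$ and all $y\in\RR^{d-1}$ with $\| y \|^{}_{2} < \delta^{\ts\prime}$, which is exactly the claim. There is no real obstacle in this argument; the work has been done in Proposition~\ref{prop:equi} and Fact~\ref{fact:projector}, and the corollary is merely the standard $2\varepsilon$-argument that joins a uniform-in-$n$ continuity statement at $y=0$ with pointwise convergence at $y=0$ to yield a joint small-neighbourhood, large-$n$ estimate.
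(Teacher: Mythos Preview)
Your proposal is correct and follows exactly the approach the paper indicates: the corollary is stated with a \qed and is introduced as ``a standard $2\ts\varepsilon$-argument'' combining the equicontinuity from Proposition~\ref{prop:equi} with $\sB^{(n)}(0)=M^n$ and the convergence $\beta^n M^n \to P$ from Fact~\ref{fact:projector}. Your triangle-inequality splitting and the separate choices of $\delta'$ and $n_0$ are precisely what is intended.
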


Now, we are set to establish the convergence of our internal cocycle
as follows.

\begin{theorem}\label{thm:B-conv}
  The scaled internal cocycle sequence\/
  $\bigl(\beta^n \sB^{(n)} (y) \bigr)_{n\in\NN}$ converges compactly
  on\/ $\RR^{d-1}$. Consequently, the matrix function\/ $C (y)$ from\/
  \eqref{eq:C-def} is well defined and continuous.
\end{theorem}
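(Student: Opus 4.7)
The plan is to identify the limit as $C(y) \defeq \lim_{k\to\infty} \beta^k \sB^{(k)}(y) \ts P$, to establish this limit exists with compact convergence, and then to show the full cocycle sequence converges to the same object. The heuristic behind this candidate is the recursion $\beta^{n+1} \sB^{(n+1)}(y) = \beta \sB(y)\bigl( \beta^n \sB^{(n)}(Ry) \bigr)$ coming from \eqref{eq:cocycle2}: assuming a limit exists, it must satisfy $C(y) = \beta \sB(y) C(Ry)$, hence $C(y) = \beta^n \sB^{(n)}(y) C(R^n y)$. Because $R$ is a contraction, we have $R^n y \to 0$, and $C(0) = \lim_n \beta^n M^n = P$ by Fact~\ref{fact:projector}, which singles out the form above.

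First, I would establish the Cauchy property for $D_k(y) \defeq \beta^k \sB^{(k)}(y) P$, uniformly on each compact $K \subset \RR^{d-1}$. A one-step telescoping, combined with the identity $\beta M P = P$ (from $M P = \lambda P$ and $\beta\lambda = 1$), gives the clean formula
\[
   D_{k+1}(y) - D_k(y) \, = \, \beta \ts \beta^k \sB^{(k)}(y)
   \bigl[ \sB(R^k y) - M \bigr] P \ts .
\]
Fact~\ref{fact:B-mat} (in its internal-cocycle version) provides a uniform bound $\|\beta^k \sB^{(k)}(y)\|^{}_{2} \leqslant c^{}_{\sB}$, and the $\lvert \sin z \rvert \leqslant \lvert z \rvert$ estimate underpinning Lemma~\ref{lem:contra} actually yields a global Lipschitz-type bound $\|\sB(z) - M\|^{}_{2} \leqslant C \|z\|^{}_{2}$. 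Substituting $z = R^k y$ and using $\|R^k y\|^{}_{2} \leqslant \theta^k \sup_{y \in K}\|y\|^{}_{2}$ produces $\|D_{k+1}(y) - D_k(y)\|^{}_{2} \leqslant C^{\prime}_{\!K}\ts\theta^k$, which is summable in $k$ uniformly on $K$. Hence $D_k$ converges uniformly on $K$ to a continuous matrix function $C(y)$.

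Next, I would compare $\beta^n \sB^{(n)}(y)$ with $D_k(y)$ via the cocycle factorisation $\beta^n \sB^{(n)}(y) = \beta^k \sB^{(k)}(y) \cdot \beta^{n-k} \sB^{(n-k)}(R^k y)$. Given $\varepsilon > 0$, use Corollary~\ref{coro:projector} to select $\delta^{\ts\prime}$ and $n^{}_{0}$ so that $\|\beta^m \sB^{(m)}(z) - P\|^{}_{2} < \varepsilon$ whenever $\|z\|^{}_{2} < \delta^{\ts\prime}$ and $m\geqslant n^{}_{0}$. Since $R$ is contractive and $D_k \to C$ uniformly on $K$, one can pick $k = k(K,\varepsilon)$ so large that $\|R^k y\|^{}_{2} < \delta^{\ts\prime}$ for all $y\in K$ and simultaneously $\|D_k(y) - C(y)\|^{}_{2} < \varepsilon$ on $K$. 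Then, for every $n \geqslant k + n^{}_{0}$,
\[
   \bigl\| \beta^n \sB^{(n)}(y) - D_k(y) \bigr\|^{}_{2}
   \, \leqslant \, c^{}_{\sB} \ts \| P \|^{}_{2} \ts \varepsilon \ts ,
\]
and a triangle inequality yields $\beta^n \sB^{(n)}(y) \to C(y)$ uniformly on $K$. Continuity of $C$ follows at once, since each $\beta^n \sB^{(n)}(y)$ is a trigonometric polynomial in $y$ and uniform limits on compact sets preserve continuity.

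The only step that carries any real content is the Cauchy estimate for $D_k$; everything else amounts to assembling Fact~\ref{fact:B-mat}, Corollary~\ref{coro:projector}, and the contraction $\theta < 1$. The subtle point to watch is the use of $\beta M P = P$ to cancel the leading term in $\beta \sB(R^k y) P - P$; without it, one would only see a bounded increment rather than the geometrically decaying one that makes the series summable.
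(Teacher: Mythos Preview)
Your argument is correct, but it proceeds along a genuinely different route from the paper's. The paper shows directly that $\bigl(\beta^n \sB^{(n)}(y)\bigr)_{n\in\NN}$ is uniformly Cauchy on each compact $K$: it factorises $\beta^{p+q}\sB^{(p+q)}(y) - \beta^{p+q+r}\sB^{(p+q+r)}(y)$ via the cocycle relation, bounds the outer factor $\beta^p\sB^{(p)}(y)$ uniformly by $c^{}_{B}$, and then splits the inner factor into three pieces using the triangle inequality --- two controlled by the equicontinuity statement of Proposition~\ref{prop:equi} (after choosing $p$ large enough that $R^pK$ lies in a $\delta$-ball), and the middle one by the fact that $(\beta^n M^n)^{}_{n}$ is Cauchy. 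No auxiliary sequence is introduced, and the projector $P$ enters only implicitly through Fact~\ref{fact:projector}.

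Your approach instead singles out the candidate limit early via $D_k(y)=\beta^k\sB^{(k)}(y)P$, and the algebraic identity $\beta MP=P$ buys you an explicit geometric rate $\|D_{k+1}-D_k\|^{}_{2}\leqslant C'_K\ts\theta^k$ on compacta --- something the paper's argument does not extract. The trade-off is that you then need a second step (via Corollary~\ref{coro:projector}) to transfer convergence from $D_k$ to the full cocycle, whereas the paper handles the whole sequence in one Cauchy estimate. Both proofs ultimately rest on the same ingredients (the uniform bound from Fact~\ref{fact:B-mat}, the Lipschitz estimate underlying Lemma~\ref{lem:contra}, and the contraction $\theta<1$). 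A minor cosmetic point: in your final displayed inequality the factor $\|P\|^{}_{2}$ is superfluous, since the difference is $\beta^k\sB^{(k)}(y)\bigl[\beta^{n-k}\sB^{(n-k)}(R^ky)-P\bigr]$ and sub-multiplicativity gives $c^{}_{\sB}\ts\varepsilon$ directly.
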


\begin{proof}
  Let $K\subset \RR^{d-1}$ be compact, choose $\varepsilon > 0$, and
  let $\delta = \delta (\varepsilon) > 0$ be as in
  Proposition~\ref{prop:equi}.  We will establish the claim by showing
  that the sequence is uniformly Cauchy on $K$.

  For $p,q,r \in \NN$, we employ the cocycle property from
  \eqref{eq:cocycle2} to get
\begin{equation}\label{eq:start}
\begin{split}
   \| \beta^{\ts p+q} \sB^{(p+q)}\nts (y)  & - 
   \beta^{\ts p+q+r} \sB^{(p+q+r)} \nts (y) \|^{}_{2} \\[2mm]
   & \leqslant \, \| \beta^{\ts p} \sB^{(p)}\nts (y) \|^{}_{2}
    \, \| \beta^{q} \sB^{(q)} \nts (R^{\ts p} y) -
   \beta^{q+r} \sB^{(q+r)} \nts (R^{\ts p} y) \|^{}_{2} \ts ,
\end{split}
\end{equation}
where the first factor on the right is bounded by
$\beta^{\ts p} \| M^p \|^{}_{2}$ and thus uniformly bounded by a
constant $c^{}_{\nts B}$, as a consequence of Fact~\ref{fact:B-mat},
applied to $\sB^{(n)}$ with the spectral norm. Via the triangle inequality, 
the second factor on the right-hand side of \eqref{eq:start} is 
bounded by
\begin{equation}\label{eq:step-one}
  \big\| \beta^{q} \bigl( \sB^{(q)} (R^{\ts p} y)
         - M^{q}\bigr) \big\|^{}_{2} + 
  \big\| \beta^{q} M^{q} - \beta^{q+r} M^{q+r} \big\|^{}_{2} +
  \big\| \beta^{q+r} \bigl(\sB^{(q+r)} ( R^{\ts p} y )
     - M^{q+r} \bigr) \big\|^{}_{2} \ts .
\end{equation}
Choose $p$ large enough so that $R^{\ts p} K$ is contained in the open
ball of radius $\delta$ around $0$, which is possible because $R$ is a
contraction. Then, the first term in \eqref{eq:step-one}, as well as
the last, is bounded by $\varepsilon$. Since
$(\beta^n M^n)^{}_{n\in\NN}$ converges to $P$ by
Fact~\ref{fact:projector}, where $\beta = \lambda^{-1}$, the sequence
is Cauchy, so there is a $q^{}_{0}\in\NN$ such that the middle term in
\eqref{eq:step-one} is bounded by $\varepsilon$, for all
$q\geqslant q^{}_{0}$ and $r \in \NN$.  Consequently,
\eqref{eq:step-one} is bounded by $3 \ts \varepsilon$ for the chosen
$p$, all $q\geqslant q^{}_{0}$, and all $r\in\NN$. Via
Fact~\ref{fact:B-mat}, used with $\sB^{(n)}$ instead of $B^{(n)}$,
this gives an upper bound of
$3 \ts\ts c^{}_{B} \ts\ts \varepsilon$ to the left-hand side of
\eqref{eq:start}. As this bound is independent of $y\in K$, and
$\varepsilon>0$ was arbitrary, uniform convergence on $K$ follows.

Since we have a compactly convergent sequence of matrix functions,
each of which is analytic and thus certainly continuous, the last
claim is obvious.
\end{proof}

Let us next analyse the matrix function $C (y)$, where we know
\[
  C(0) \, = \, P 
\]
from Fact~\ref{fact:projector}. Now, Eq.~\eqref{eq:cocycle2} implies
that, for any fixed $m\in\NN$,
\[
  C (y) \, = \lim_{n\to\infty} \beta^{n+m} \sB^{(n+m)} (y)
  \, = \, C(y) \, \beta^{m} \lim_{n\to\infty} \sB^{(m)} (R^{n} y)
  \, = \, C(y)\, \beta^{m} M^m,
\]
because $R$ is a contraction and $\sB^{(m)} (0) = M^m$.
With $m=1$, this gives
\[
   C (y) M \, = \, \lambda \ts\ts C (y) \ts ,
\]
as well as $C(y) = C(y) P$ from taking the limit $m\to\infty$.  Each
row of $C (y)$ thus is a left eigenvector of $M$ for its eigenvalue
$\lambda$, or vanishes, hence is a $y$-dependent multiple 
of $\langle u \ts |$. But this means
\begin{equation}\label{eq:C-structure}
    C(y) \, = \, |\ts c(y)\rangle\langle u\ts |
\end{equation}
with $|\ts c(0)\rangle = |\ts v\rangle$. Observing that the window
volumes are proportional to the entries of $| \ts v \rangle$ by
Lemma~\ref{lem:disjoint}, so $|f(0)\rangle = \eta \ts | \ts v\rangle$
for some $\eta > 0$, one has the following consequence.

\begin{coro}\label{coro:FT-win}
  For any\/ $y\in\RR^{d-1}$, the matrix\/ $C(y)$ from \eqref{eq:C-def}
  has rank\/ $\leqslant 1$, and can be represented as in
  \eqref{eq:C-structure}. Moreover, with\/
  $f^{}_i = \widecheck{1^{}_{W_i}}$, one has\/
  $| f (y) \rangle = \eta \ts\ts | \ts c(y) \rangle$ with the above\/
  $\eta$, together with\/
  $|\ts c(y) \rangle = C(y) \ts |\ts v \rangle$.  \qed
\end{coro}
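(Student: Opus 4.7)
The plan is to assemble the results already laid out in the paragraph preceding the corollary. For the rank and outer-product structure of $C(y)$, the two identities $C(y)\ts M = \lambda\ts C(y)$ and $C(y) = C(y)\ts P$ derived there force each row of $C(y)$ either to be a left eigenvector of $M$ for the eigenvalue $\lambda$, or to vanish. By primitivity of $M$, that left eigenspace is one-dimensional and spanned by $\langle u\ts|$, so each row is a scalar multiple of $\langle u\ts|$. Collecting these scalars into a column vector $|\ts c(y)\rangle$ yields $C(y) = |\ts c(y)\rangle\langle u\ts|$ as in \eqref{eq:C-structure}, and the outer-product form has rank at most one by inspection.

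For the identification $|f(y)\rangle = \eta\ts |\ts c(y)\rangle$, I would iterate the transfer identity from Proposition~\ref{prop:FT}. Applying it $n$ times gives
\[
    |f(y)\rangle \, = \, \beta^{n}\ts \sB^{(n)}(y)\ts |f(R^{n} y)\rangle
\]
with $\beta = \lambda^{-1}$. I would then let $n\to\infty$. Since $R$ is a strict contraction, $R^{n} y \to 0$, and the entrywise continuity of each $f^{}_{i} = \widecheck{1^{}_{W_{i}}}$ (the inverse Fourier transform of a compactly supported $L^{1}$-function) gives $|f(R^{n} y)\rangle \to |f(0)\rangle$. By Lemma~\ref{lem:disjoint}, $f^{}_{i}(0) = \vol(W^{}_{\nts i}) = \eta\ts v^{}_{i}$, hence $|f(0)\rangle = \eta\ts|\ts v\rangle$. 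Simultaneously, Theorem~\ref{thm:B-conv} delivers $\beta^{n}\ts \sB^{(n)}(y) \to C(y)$, while the uniform bound from Fact~\ref{fact:B-mat} (applied to $\sB^{(n)}$) ensures that the matrix-vector product behaves well in the limit. Combining yields $|f(y)\rangle = \eta\ts C(y)\ts |\ts v\rangle$.

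Finally, I would invoke the outer-product form from the first step together with the normalisation $\langle u\ts|\ts v\rangle = 1$ to compute
\[
  C(y)\ts |\ts v\rangle \, = \, |\ts c(y)\rangle\,\langle u\ts|\ts v\rangle
   \, = \, |\ts c(y)\rangle \ts ,
\]
which simultaneously gives $|\ts c(y)\rangle = C(y)\ts|\ts v\rangle$ and $|f(y)\rangle = \eta\ts |\ts c(y)\rangle$. No step here is really an obstacle; the only point worth flagging is the interchange of limit and matrix-vector multiplication in the passage $n\to\infty$, but both factors converge (the first by Theorem~\ref{thm:B-conv}, the second by continuity of $f^{}_{i}$ at $0$) and the matrix factor is uniformly bounded, so the product passes to the limit without difficulty.
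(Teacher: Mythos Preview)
Your proposal is correct and follows essentially the same approach as the paper: the rank-one structure is read off from the identities $C(y)M=\lambda C(y)$ and $C(y)=C(y)P$ derived just before the corollary, and the identification $|f(y)\rangle=\eta\,C(y)|v\rangle$ comes from iterating Proposition~\ref{prop:FT} and passing to the limit via Theorem~\ref{thm:B-conv} together with $|f(0)\rangle=\eta|v\rangle$. The paper leaves these steps implicit (the corollary carries a \qed\ with no separate proof), and you have spelled them out accurately.
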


In particular, this result makes the functions $f^{}_i$ effectively
computable from $C$.

\begin{remark}
  Let us mention that the continuity of the functions $f^{}_{i}$ is
  also clear from the fact that each is the (inverse) Fourier
  transform of an $L^1$-function, and $f^{}_{i}$ decays at infinity by
  the Riemann--Lebesgue lemma; see \cite[Thm.~IX.7]{RS}.  What is
  more, since all $W_i$ are compact, we actually know that each
  $f^{}_{i} $ has an analytic continuation to an entire analytic
  function of $d\nts -\! 1$ variables, with a well-known growth
  estimate according to the Paley--Wiener theorem; see
  \cite[Thm.~IX.12]{RS}. This also means that
  the rank of $C(y)$ is $1$ almost everywhere.  \exend
\end{remark}

\section{Fourier--Bohr coefficients and uniform
distribution}\label{sec:FB}

Here, we explain the general connection with the diffraction
amplitudes mentioned earlier in Remark~\ref{rem:model}.  Given a typed
point set $\vL = \dot{\bigcup}_{i} \, \vL_{i} \subset \RR$, its
\emph{Fourier--Bohr (FB) coefficient} (or amplitude) at $k\in\RR$ is
defined as a volume-averaged exponential sum,
\begin{equation}\label{eq:FB}
   A^{}_{\nts\vL}(k) \, \defeq \lim_{r\to\infty} \myfrac{1}{2r}
   \sum_{\substack{x\in\vL \\ \lvert x\rvert\leqslant r}} \ee^{-2\pi\ii kx}, 
\end{equation}
and similarly for the control point sets $\vL_{i}$ with
$1\leqslant i \leqslant N$, provided the limits exist. This is the
case for point sets from primitive inflation rules, which are linearly
repetitive and thus uniquely ergodic \cite{Boris,LP}. The definition
entails that $A^{}_{\nts\vL} (0) = \dens (\vL)$, and one gets
$\sum_{i=1}^{N} A^{}_{\nts \vL_{i}} (k) = A^{}_{\nts \vL} (k)$ for all
$k\in\RR$ because the point sets $\vL_{i}$ are disjoint by
construction. Let us also recall that $A^{}_{\nts\vL} (.)$, when
viewed as a function of $\vL$, is continuous, which correponds to the
continuity of all eigenfunctions in this setting \cite{Daniel}; see
Remark~\ref{rem:eigen} below for more.

In general, we know from the embedding procedure that
$\overline{\vL^{\star}_{i}} =W^{}_{\nts i}$. If $\vL_i$ is also a
model set, the point set $\vL^{\star}_{i}$ is uniformly distributed
(and even well distributed) in $W_i$; compare \cite{Martin1,Bob-ufd}. 
This uniform distribution occurs more generally, as we analyse next.

It is clear from Remark~\ref{rem:win-cover} and the example in
Section~\ref{sec:twisted} that the lift of $\vL$ to internal space
will not be uniformly distributed in $W$ in general. However, the
situation is more favourable for the individual point sets $\vL_i$.
For any $1 \leqslant i \leqslant N$, consider the sequence
$(\mu^{(n)}_{i})^{}_{n\in\NN}$ of point measures in internal space
defined by
\[
  \mu^{(n)}_{i} \, = \, \myfrac{1}{2n}
  \sum_{\substack{x\in\vL_i \\ \lvert x \rvert \leqslant n}}
  \delta^{}_{x^{\star}} \ts .
\]
Clearly, one has $\supp (\mu^{(n)}_{i}) \subset W_{i}$ by
construction, and $\mu^{}_{i} \defeq \lim_{n\to\infty} \mu^{(n)}_{i}$
exists (under weak convergence), due to the strict ergodicity of the
dynamical system defined by $\vL_i$. An explicit argument for this
convergence, based on the linear repetitivity of $\vL_i$, can be
formulated along the lines of the proof of \cite[Thm.~5.1]{LP},
observing that $\bigl( \delta^{}_{x^{\star}} \nts * g \bigr) (y) =
g (y - x^{\star})$ and using the result pointwise. Here,
$\mu^{}_{i}$ is a positive measure on $\RR^{d-1}$ with
$\supp (\mu^{}_{i}) \subseteq W^{}_{i}$ and total mass
$\| \mu^{}_{i} \| = \dens (\vL_{i})$. We say that $\vL_i$
\emph{induces} the measure $\mu^{}_{i}$ in internal space.

\begin{remark}
   If $g \in C^{}_{0} (\RR^{d-1})$ and $a,b \in \RR$ with $a<b$,
   one can consider, for each fixed $i$,
\[
    w_i \bigl( [a,b] \bigr) \, \defeq \! \sum_{x\in \vL_i \cap [a,b]}
    g (x^{\star}) \ts .
\]   
  Now, since any $g \in C^{}_{0} (\RR^{d-1})$ is bounded, 
  the Delone property of $\vL_i$ implies that there are some 
  numbers $c,d>0$ such that $b-a>c$ implies
\[
    \bigl| w_i \bigl( [a,b]\bigr) \bigr| \, \leqslant \,
    (b-a) \ts d \ts ,
\]  
  and each $w_i$ is a local weight function in the sense of
  \cite{LP}. 
  
  Then, \cite[Thm.~5.1]{LP} yields convergence of
  $w_i \bigl( t+ [a,b]\bigr)/(b-a)$, uniformly in $t\in \RR$, due to
  the linear repetitivity of the $\vL_i$. An analogous argument
  applies when $g$ is replaced by $\delta^{}_{z} * g$ with
  an arbitrary $z\in\RR^{d-1}$. 
  The result obtained this way is stronger than needed below, 
  and actually also gives the uniform existence of the FB coefficients.
\exend
\end{remark}

When $\vL_i$ induces $\mu^{}_{i}$, a simple calculation (with a change
of the summation variable) shows that $\lambda \vL_i + t$ with
$t\in \ZZ[\lambda]$ induces the positive measure
\[
  \myfrac{1}{\lambda} \, \delta^{}_{t^{\star}}
  \nts * (Q . \mu^{}_{i}),
\]
where $Q$ is the contraction from \eqref{eq:def-Q} and $Q.\mu$
denotes the push-forward of a finite measure $\mu$, so
$\bigl( Q. \mu \bigr) (\varphi) = \mu (\varphi \circ Q)$ for
$\varphi \in C^{}_{0} (\RR^{d-1})$.  Equivalently, one can use
$\bigl( Q. \mu \bigr) (\cE) = \mu \bigl( Q^{-1} (\cE)\bigr)$ with
$\cE$ an arbitrary Borel set.

Let us now assume that our typed point set
$\vL = \dot{\bigcup}_{i} \vL_{i}$ is a fixed point of the inflation
equation \eqref{eq:set-inflation}. This is no restriction as one can
always achieve this via replacing $\varrho$ by a suitable power;
compare Remark~\ref{rem:cycle}. Then, our induced measures
$\mu^{}_{1}, \ldots , \mu^{}_{N}$ must satisfy
\begin{equation}\label{eq:mu-eq}
  \mu^{}_{i} \, = \, \myfrac{1}{\lambda} \sum_{j=1}^{N}
  \, \sum_{t\in T_{ij}} \delta^{}_{t^{\star}} \nts *
  (Q. \mu^{}_{j}) \ts ,
\end{equation}
which defines a system of $N$ linear equations. We can spell out one
solution as follows, where $\mu^{}_{\mathrm{Leb}}$ denotes Lebesgue
measure on internal space, $\RR^{d-1}$.

\begin{lemma}\label{lem:sol}
  The absolutely continuous measures\/ $\mu^{\prime}_{i} = g^{}_{i}
  \ts \mu^{}_{\mathrm{Leb}}$ with Radon--Nikodym densities
\[
  g^{}_{i} \, = \, \frac{\dens (\vL_i)}{\vol (W_i)}
  \, 1^{}_{W_i}
\]
satisfy\/ \eqref{eq:mu-eq} together with\/ $\| \mu^{\prime}_{i} \|
= \dens (\vL_i)$.  
\end{lemma}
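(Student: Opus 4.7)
The plan is to verify the identity \eqref{eq:mu-eq} by direct computation, evaluating the right-hand side on the candidate measures $\mu^{\ts\prime}_j = g^{}_{j} \ts \mu^{}_{\mathrm{Leb}}$ and exploiting the structure uncovered in Lemma~\ref{lem:disjoint} together with the Lebesgue identity~\eqref{eq:charwin}.

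First I would dispose of the mass claim. Since $g^{}_{i}$ is supported on $W_{i}$ with constant value $\dens(\vL_{i})/\vol(W_{i})$, one immediately has
\[
    \| \mu^{\ts\prime}_{i} \| \, = \, \int_{\RR^{d-1}} g^{}_{i} \dd \mu^{}_{\mathrm{Leb}}
    \, = \, \dens(\vL_{i}) \ts ,
\]
which settles the second assertion.

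Next I would compute the building blocks that appear on the right-hand side of \eqref{eq:mu-eq}. For the push-forward under the invertible linear map $Q$, a standard change-of-variables argument yields $Q. (g \ts \mu^{}_{\mathrm{Leb}}) = \lvert\det(Q)\rvert^{-1} (g \circ Q^{-1}) \ts \mu^{}_{\mathrm{Leb}}$, and since $\lvert\det(Q)\rvert = \lambda^{-1}$ and $1^{}_{W_{j}} \circ Q^{-1} = 1^{}_{Q W_{j}}$, this gives
\[
    Q.\ts\mu^{\ts\prime}_{j} \, = \, \lambda \, \frac{\dens(\vL_{j})}{\vol(W_{j})}
    \, 1^{}_{Q W_{j}} \ts \mu^{}_{\mathrm{Leb}} \ts .
\]
Convolution with $\delta^{}_{t^{\star}}$ then amounts to the translation $1^{}_{Q W_{j}} \mapsto 1^{}_{Q W_{j} + t^{\star}}$, so that the right-hand side of \eqref{eq:mu-eq} becomes
\[
    \frac{1}{\lambda} \sum_{j=1}^{N} \sum_{t \in T_{ij}}
    \lambda \, \frac{\dens(\vL_{j})}{\vol(W_{j})} \, 1^{}_{Q W_{j} + t^{\star}}
    \ts \mu^{}_{\mathrm{Leb}} \ts .
\]

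The key observation now is that, by Lemma~\ref{lem:disjoint}, $\vol(W_{j}) = \eta \ts v_{j}$ and $\dens(\vL_{j}) = v_{j} \ts \dens(\vL)$, so the ratio $\dens(\vL_{j})/\vol(W_{j}) = \dens(\vL)/\eta$ is \emph{independent} of $j$ and may be pulled out of the double sum. What remains inside is $\sum_{j} \sum_{t \in T_{ij}} 1^{}_{Q W_{j} + t^{\star}}$, which by Theorem~\ref{thm:cover} (equivalently, by Eq.~\eqref{eq:charwin} combined with the row-wise measure-disjointness from Lemma~\ref{lem:disjoint}) equals $1^{}_{W_{i}}$ in the Lebesgue sense. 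Thus the right-hand side collapses to
\[
    \frac{\dens(\vL)}{\eta} \, 1^{}_{W_{i}} \ts \mu^{}_{\mathrm{Leb}}
    \, = \, \frac{\dens(\vL_{i})}{\vol(W_{i})} \, 1^{}_{W_{i}} \ts \mu^{}_{\mathrm{Leb}}
    \, = \, \mu^{\ts\prime}_{i} \ts ,
\]
as required.

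There is no real obstacle in the argument; it is essentially a bookkeeping calculation. The only step that is conceptually non-trivial is recognising that Lemma~\ref{lem:disjoint} simultaneously delivers two facts needed for the identity to close: the proportionality $\vol(W_{j}) \propto v_{j} \propto \dens(\vL_{j})$, which makes the prefactor $j$-independent, and the measure-disjointness, which converts the sum of translated indicators into the single indicator $1^{}_{W_{i}}$. Once these are in hand, the PV-unit condition enters only through $\lvert\det(Q)\rvert = 1/\lambda$, which cancels the explicit $1/\lambda$ in \eqref{eq:mu-eq}.
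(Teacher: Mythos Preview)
Your proof is correct and follows essentially the same route as the paper's own argument: compute the push-forward and convolution explicitly, use Lemma~\ref{lem:disjoint} to see that $\dens(\vL_j)/\vol(W_j)$ is independent of $j$, and then reduce the remaining sum of indicators to $1^{}_{W_i}$ via the window equation~\eqref{eq:charwin}. The only cosmetic difference is that you handle the mass claim first and invoke Theorem~\ref{thm:cover} rather than Eq.~\eqref{eq:charwin} directly, but the content is the same.
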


\begin{proof}
  Observe that
  $Q.(1^{}_{W_i} \ts \mu^{}_{\mathrm{Leb}}) = \lvert \det (Q)
  \rvert^{-1} 1^{}_{Q W_i} \, \mu^{}_{\mathrm{Leb}}$, which follows
  from a simple change of variable calculation.  Likewise, one has
  $\delta^{}_{t^{\star}}\nts * 1^{}_{W_i} = 1^{}_{W_i + t^{\star}}$,
  and inserting the expressions into \eqref{eq:mu-eq} leads to
\[
  \frac{\dens (\vL_i)}{\vol (W_i)} \,  1^{}_{W_i} \, =
  \sum_{j=1}^{N} \, \sum_{t\in T_{ij}} \frac{\dens (\vL_j)}{\vol (W_j)}
  \, 1^{}_{Q W_j + t^{\star}} \ts .
\]
By construction, we have $\dens (\vL_i) = \dens (\vL) \ts v^{}_{i}$,
where $v^{}_{i}$ is the relative frequency of points of type $i$;
compare Remark~\ref{rem:geo}. On the other hand, we know from
Lemma~\ref{lem:disjoint} that the $N$ window volumes satisfy
$\vol (W_i) = \eta \ts v^{}_{i}$ for some fixed $\eta > 0$, which
implies that
\[
    \frac{\dens (\vL_i)}{\vol (W_i)} \, = \, 
    \frac{\dens (\vL)}{\eta}
\]
is independent of $i$, and the previous equation turns into the window
equation \eqref{eq:charwin}, which is satisfied in the Lebesgue sense.

  The claimed normalisation is obvious.
\end{proof}

Now, we interpret the right-hand side of \eqref{eq:mu-eq} as a linear
mapping on $\bigl(\cM_{+} (\RR^{d-1})\bigr)^{N}$, with
$\cM_{+} (\RR^{d-1})$ denoting the finite, positive measures on
$\RR^{d-1}$, equipped with the total variation norm, $\| . \|$.  If
$(\mu^{}_{1}, \ldots , \mu^{}_{N})$ is an $N$-tuple of positive
measures, its image is $(\mu^{\prime}_{1}, \ldots , \mu^{\prime}_{N})$
with
\[
\begin{split}
  \| \mu^{\prime}_{i} \| \, & = \, \Big\| \myfrac{1}{\lambda}
  \sum_{j=1}^{N} \, \sum_{t\in T_{ij}} \delta^{}_{t^{\star}} \nts *
  (Q.\mu^{}_{j}) \Big\| \, = \, \myfrac{1}{\lambda} \sum_{j=1}^{N}
  \, \sum_{t\in T_{ij}} \big\| \delta^{}_{t^{\star}}\nts * (Q.\mu^{}_{j})
  \big\| \\[2mm]
  & = \, \myfrac{1}{\lambda} \sum_{j=1}^{N} M^{}_{ij}
  \big\| Q. \mu^{}_{j} \big\| \, = \, \myfrac{1}{\lambda}
  \sum_{j=1}^{N} M^{}_{ij} \ts \| \mu^{}_{j} \| \ts .
\end{split}
\]
Consequently, when $\| \mu^{}_{i} \| = \alpha \ts v^{}_{i}$ for all
$1 \leqslant i \leqslant N$ and some $\alpha > 0$, the total mass of
each $\mu^{}_{i}$ is preserved under the iteration because
$M | \ts v \rangle = \lambda \ts | \ts v \rangle$. This leads to the
following result.

\begin{prop}\label{prop:m-space}
  Let\/ $\alpha>0$ be fixed and consider the space
\[
  \cM_{\alpha} \, \defeq \, \big\{
  (\nu^{}_{1}, \ldots , \nu^{}_{N}) : \nu^{}_{i} \in \cM_{+}
  (\RR^{d-1}) , \, \| \nu^{}_{i} \| = \alpha \ts v^{}_{i} \big\} ,
\]
with\/ $|\ts v \rangle$ the right PF eigenvector of\/ $M$.  Then,
$\cM_{\alpha}$ is invariant under the iteration of the right-hand side
of\/ \eqref{eq:mu-eq}, and contains precisely one solution to
Eq.~\eqref{eq:mu-eq}, namely the one defined by\/
$\nu^{}_{i} = \frac{\alpha \, v^{}_{i}}{\vol (W_i)} \ts 1^{}_{W_i} \ts
\mu^{}_{\mathrm{Leb}}$ for\/ $1 \leqslant i \leqslant N$.
\end{prop}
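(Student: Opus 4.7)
\medskip
\noindent\textbf{Proof proposal.} My plan is to split the proposition into three parts---invariance of $\cM_{\alpha}$ under the iteration defined by the right-hand side of \eqref{eq:mu-eq}, existence of the stated solution, and uniqueness---where I expect only the last to be non-trivial. For invariance, the total-variation computation displayed just before the proposition shows that the image $\mu^{\prime}$ of $\nu = (\nu^{}_{1}, \ldots, \nu^{}_{N})$ satisfies $\| \mu^{\prime}_{i}\| = \frac{1}{\lambda} \sum_{j} M^{}_{ij} \| \nu^{}_{j}\|$. Inserting $\|\nu^{}_{j}\| = \alpha \ts v^{}_{j}$ and using $M|\ts v\rangle = \lambda |\ts v\rangle$ gives $\|\mu^{\prime}_{i}\| = \alpha \ts v^{}_{i}$, so $\cM_{\alpha}$ is invariant. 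For existence, Lemma~\ref{lem:sol} exhibits $\mu^{\prime}_{i} = \frac{\dens(\vL_{i})}{\vol(W_{i})} \ts 1^{}_{W_{i}} \ts \mu^{}_{\mathrm{Leb}}$ as a solution of \eqref{eq:mu-eq} in $\cM_{\dens(\vL)}$, and linearity of \eqref{eq:mu-eq} in $\nu$ allows me to rescale by $\alpha/\dens(\vL)$ to produce the claimed $\nu^{}_{i} = \frac{\alpha \ts v^{}_{i}}{\vol(W_{i})} \ts 1^{}_{W_{i}} \ts \mu^{}_{\mathrm{Leb}}$.

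The main obstacle is uniqueness. My approach is to pass to the Fourier side, reducing \eqref{eq:mu-eq} to a transfer equation parallel to the one in Proposition~\ref{prop:FT}. Every $\nu^{}_{i} \in \cM_{+}(\RR^{d-1})$ has a bounded continuous (inverse) Fourier transform, and finite signed measures are uniquely determined by their Fourier transforms. A routine change-of-variable calculation yields $\widecheck{Q.\sigma}(y) = \widecheck{\sigma}(Q^{T}y) = \widecheck{\sigma}(R\ts y)$, while convolution with a point mass transforms as $\widecheck{\delta^{}_{t^{\star}} \nts * \sigma}(y) = \ee^{2\pi\ii\langle t^{\star}|\ts y\rangle} \widecheck{\sigma}(y)$. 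Applying inverse Fourier transform to \eqref{eq:mu-eq} therefore converts it into
\[
   | \widecheck{\nu}(y)\rangle \, = \, \lambda^{-1} \sB(y)\,
   | \widecheck{\nu}(R\ts y)\rangle \ts ,
\]
and iterating via the cocycle relation \eqref{eq:cocycle2} gives $|\widecheck{\nu}(y)\rangle = \beta^{n} \sB^{(n)}(y) \ts |\widecheck{\nu}(R^{n}y)\rangle$ for every $n \in \NN$.

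It then remains to pass to the limit $n\to\infty$. Since $R$ is a contraction, $R^{n} y \to 0$; combined with continuity of $\widecheck{\nu^{}_{i}}$ and $\widecheck{\nu^{}_{i}}(0) = \|\nu^{}_{i}\| = \alpha \ts v^{}_{i}$, this gives $|\widecheck{\nu}(R^{n}y)\rangle \to \alpha \ts |\ts v\rangle$. By Theorem~\ref{thm:B-conv} together with the structure \eqref{eq:C-structure} of the limit matrix, $\beta^{n} \sB^{(n)}(y) \to C(y) = |\ts c(y)\rangle \langle u\ts|$, and the normalisation $\langle u\ts|\ts v\rangle = 1$ then forces $|\widecheck{\nu}(y)\rangle = \alpha \ts |\ts c(y)\rangle$ for every $y \in \RR^{d-1}$. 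This identifies $\widecheck{\nu}$ uniquely, hence $\nu$ itself by Fourier injectivity, which settles uniqueness in $\cM_{\alpha}$. As a consistency check, Corollary~\ref{coro:FT-win} supplies $|f(y)\rangle = \eta \ts |\ts c(y)\rangle$, so this identification recovers exactly the explicit solution of Lemma~\ref{lem:sol} under inverse Fourier transform.
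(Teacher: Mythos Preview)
Your argument is correct, but it is a genuinely different proof from the one in the paper. The paper equips $\cM_{\alpha}$ with the Hutchinson metric, observes that it becomes a complete metric space, checks that the iteration \eqref{eq:mu-eq} is a contraction (deferring the details to \cite[Sec.~5]{BM00}), and then invokes Banach's contraction principle; the explicit form of the fixed point comes from Lemma~\ref{lem:sol}. By contrast, you bypass the metric setup entirely and instead pass to the Fourier side, where the fixed-point equation becomes the transfer relation $|\widecheck{\nu}(y)\rangle = \lambda^{-1}\sB(y)\,|\widecheck{\nu}(Ry)\rangle$; uniqueness then drops out of the cocycle convergence already established in Theorem~\ref{thm:B-conv}, combined with continuity of $\widecheck{\nu}$ at $0$ and Fourier injectivity on finite measures.

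What each route buys: the paper's contraction argument is independent of Section~\ref{sec:cocycle} and, as a bonus, gives convergence of the iteration from any starting tuple in $\cM_{\alpha}$, not just uniqueness of the fixed point. Your approach, on the other hand, is essentially free once Theorem~\ref{thm:B-conv} is in hand, avoids introducing a new metric, and makes the identification $|\widecheck{\nu}(y)\rangle = \alpha\,|\ts c(y)\rangle$ completely explicit --- thereby tying Proposition~\ref{prop:m-space} directly back to Corollary~\ref{coro:FT-win}.
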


\begin{proof}
  The space $\cM_{\alpha}$ can be equipped with the Hutchinson metric,
  compare \cite[Sec.~2]{BM00} and references therein, which turns it
  into a complete metric space. The iteration then is a contraction,
  as is obvious from 
\[
  \Big\| \myfrac{1}{\lambda} \, \delta^{}_{t^{\star}}\nts *
  (Q.\ts\nu^{}_{j} ) \Big\| \, = \, \myfrac{1}{\lambda} \,
  \big\| \delta^{}_{t^{\star}} \nts * (Q.\ts\nu^{}_{j}) \big\|
  \, = \, \myfrac{1}{\lambda} \, \| Q . \ts\nu^{}_{j} \|
  \, = \, \myfrac{1}{\lambda} \, \| \nu^{}_{j} \|
\]
where $\lambda > 1$; see \cite[Sec.~5]{BM00} for the remaining steps.

Now, the first claim is a consequence of Banach's contraction
principle, while the concrete form of the solution follows from
Lemma~\ref{lem:sol}.
\end{proof}

If one starts the iteration with an arbitrary $N$-tuple of
non-negative measures, not all $0$, there is a unique component of the
total mass vector in the PF direction of $M$, which defines the
parameter $\alpha$, and all other components decay exponentially fast.

Our main result of this section can now be formulated as follows.

\begin{theorem}\label{thm:uniform-dist}
  Let\/ $\vL = \dot{\bigcup}_{i} \vL_{i}$ be the typed point set of a
  primitive, unimodular PV inflation rule as constructed above, and
  consider the natural CPS that emerges from the Minkowski embedding.
  Then, each\/ $\vL_i$ induces a unique measure in internal space,
  namely
\[
  \mu^{}_{i} \, = \,   \frac{\dens (\vL_i)}{\vol (W_i)}
      \, 1^{}_{W_i} \ts \mu^{}_{\mathrm{Leb}} \ts ,
\]
where the\/ $W_i$ are the solutions of the window IFS\/
\eqref{eq:win-IFS}.  This entails the statement that, for all\/
$1\leqslant i \leqslant N$, the set\/ $\vL^{\star}_{i}$ is uniformly
distributed in\/ $W_i$.
\end{theorem}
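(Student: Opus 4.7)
The plan is to combine the convergence/existence statement for the induced measures (already sketched before Lemma~\ref{lem:sol}) with the uniqueness statement of Proposition~\ref{prop:m-space}. The overall strategy is: show that the induced measures $\mu^{}_{i}$ exist, that the tuple $(\mu^{}_{1},\ldots,\mu^{}_{N})$ satisfies the fixed-point equation \eqref{eq:mu-eq}, and that it belongs to the right space $\cM_{\alpha}$; then uniqueness in $\cM_{\alpha}$ pins down $\mu^{}_{i}$ as the explicit absolutely continuous measure, from which uniform distribution is automatic.

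First, I would reduce to the case that $\vL$ is a fixed point of the inflation. By Remark~\ref{rem:cycle}, some power $\varrho^{\ell}$ fixes $\vL$, and passing to $\varrho^{\ell}$ preserves the hull, the CPS, and the window solution of \eqref{eq:win-IFS}, so we may assume \eqref{eq:set-inflation} holds on the nose. Next, I would invoke linear repetitivity and unique ergodicity of the hull (cf.\ \cite{Boris,LP}) to argue, as in the paragraph following \eqref{eq:mu-eq}'s preparation, that the limit $\mu^{}_{i}=\lim_{n\to\infty}\mu^{(n)}_{i}$ exists in the vague topology. The total mass is straightforward: $\|\mu^{(n)}_{i}\| = \#(\vL_{i}\cap[-n,n])/2n \to \dens(\vL_{i}) = \dens(\vL)\ts v^{}_{i}$ by Remark~\ref{rem:geo}. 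Hence the tuple lies in $\cM_{\alpha}$ for $\alpha=\dens(\vL)$.

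The crucial step is to verify that $(\mu^{}_{1},\ldots,\mu^{}_{N})$ satisfies \eqref{eq:mu-eq}. I would start from the set-theoretic inflation identity $\vL_{i} = \dot{\bigcup}_{j}(\lambda\vL_{j}+T^{}_{ij})$ and push it through the $\star$-map. The pre-limit calculation already indicated in the excerpt shows that $\lambda\vL_{j}+t$ (with $t\in T^{}_{ij}$) induces the measure $\lambda^{-1}\delta^{}_{t^{\star}}*(Q.\mu^{}_{j})$; one only has to take some care with truncation, because the averaging window $[-r,r]$ for $\vL_{i}$ corresponds to the window $[-r/\lambda,r/\lambda]$ for the component $\vL_{j}$ that gets inflated. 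The density boundary effects vanish with $1/r$ since the $\vL_{j}$ are Delone, so the limit survives. Disjointness of the decomposition \eqref{eq:set-inflation} prevents any double counting, and one obtains exactly \eqref{eq:mu-eq} in the limit.

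With these two facts in hand, Proposition~\ref{prop:m-space} applies and identifies $\mu^{}_{i}$ with the explicit absolutely continuous measure stated in the theorem. Finally, uniform distribution of $\vL^{\star}_{i}$ in $W^{}_{i}$ is just a restatement: the sequence of averaged point masses converges weakly to the normalised Lebesgue measure on $W^{}_{i}$, which by standard arguments (testing against indicator functions of continuity sets, using $\vol(\partial W_{i})=0$ from Proposition~\ref{prop:disjoint}) is equivalent to uniform distribution. The main technical obstacle I expect is the bookkeeping in the limit of the inflation identity: verifying that the truncation at $[-r,r]$ commutes with the operations $Q.(\cdot)$ and $\delta^{}_{t^{\star}}*(\cdot)$ up to $o(r)$ terms, which will use the Delone property of each $\vL_{j}$ and the fact that $t^{\star}$ ranges over a finite set. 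Once that is in place, everything reduces to the already proven uniqueness result.
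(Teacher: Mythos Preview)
Your proposal is correct and follows essentially the same route as the paper: establish that the induced measures exist, have the right total mass, and satisfy the fixed-point equation \eqref{eq:mu-eq}, then invoke Proposition~\ref{prop:m-space} to pin them down, with uniform distribution a restatement of weak convergence. The paper's proof is simply terser because the surrounding text already carries the existence, the fixed-point reduction, and the mass computation; your write-up just makes these pieces explicit (and rightly flags the truncation bookkeeping and the use of $\vol(\partial W_i)=0$).
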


\begin{proof}
  The IFS \eqref{eq:mu-eq} for the distributions induced by the
  $\vL_i$ on the compact sets $W_i$ is contractive on $\cM_{\alpha}$,
  with $\alpha = \dens (\vL)$, and the unique solution is the one
  stated.

  Recalling the definition of the induced measures, weak convergence
  clearly is equivalent to the uniform distribution of $\vL^{\star}_{i}$
  in $W_i$.
\end{proof}

At this point, we can return to the connection between the FB
coefficients and the Fourier transform of the windows, even though the
latter generally only code a covering model set. Still, due to uniform
distribution, one obtains an explicit formula as follows.

\begin{coro}
  Under the assumptions of 
  Theorem~\textnormal{\ref{thm:uniform-dist}}, the FB coefficients
  of the\/ $\vL_i$ are proportional to the Fourier amplitudes of the
  covering model set via
\[
    A^{}_{\nts\vL_i}(k) \, = \, 
    \myfrac{\dens (\vL_i)}{\vol (W_i)}\,
    \widecheck{1^{}_{W_{i}}}(k^{\star}) 
\]
for any\/ $k\in L^{\circledast}$, together with 
$A^{}_{\nts\vL_{i}}(k)=0$ for any 
$k\in\RR\setminus L^{\circledast}$. \qed
\end{coro}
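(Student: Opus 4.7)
The plan is to express the Fourier--Bohr coefficient from \eqref{eq:FB} as a test integral against the induced measure $\mu^{(n)}_{i}$ from Section~\ref{sec:FB}, and then to invoke the weak convergence supplied by Theorem~\ref{thm:uniform-dist}.

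For the first assertion, I would begin by invoking the duality between $L$ and $L^{\circledast}$ inherent in the CPS~\eqref{eq:CPS}: for $k\in L^{\circledast}$ and $x\in L = \ZZ[\lambda]$, the defining relation $k x + \langle x^{\star}\,|\,k^{\star}\rangle \in \ZZ$ yields the phase identity $\ee^{-2\pi \ii k x} = \ee^{2\pi\ii\langle x^{\star}\,|\,k^{\star}\rangle}$. Substituting this into \eqref{eq:FB} turns the Fourier--Bohr sum into a test integral,
\[
  A^{}_{\nts\vL_i}(k) \, = \lim_{n\to\infty} \int_{\RR^{d-1}} \ee^{2\pi\ii\langle y\,|\,k^{\star}\rangle}\,\dd\mu^{(n)}_{i}(y) ,
\]
against the point measures $\mu^{(n)}_{i}$ introduced above Theorem~\ref{thm:uniform-dist}.

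Next, I would apply Theorem~\ref{thm:uniform-dist}: the sequence $\bigl(\mu^{(n)}_{i}\bigr)_{n\in\NN}$ converges weakly to $\mu^{}_{i} = \frac{\dens(\vL_i)}{\vol(W_i)}\ts 1^{}_{W_i}\ts\mu^{}_{\mathrm{Leb}}$, and all $\mu^{(n)}_{i}$ are supported in the common compact set $W_i$. Since the test function $y\mapsto \ee^{2\pi\ii\langle y\,|\,k^{\star}\rangle}$ is continuous and bounded, a standard tightness argument (or the insertion of a smooth cutoff supported on a compact neighbourhood of $W_i$) allows one to interchange limit and integral, giving
\[
  A^{}_{\nts\vL_i}(k) \, = \int \ee^{2\pi\ii\langle y\,|\,k^{\star}\rangle}\,\dd\mu^{}_{i}(y) \, = \, \frac{\dens(\vL_i)}{\vol(W_i)} \, \widecheck{1^{}_{W_i}}(k^{\star}) ,
\]
which is the first claim.

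For $k\in\RR\setminus L^{\circledast}$, the vanishing of $A^{}_{\nts\vL_i}(k)$ follows from the spectral picture recalled in Remark~\ref{rem:spec}: the continuously representable eigenvalues of the translation dynamical system (equivalently, the atoms of the pure point part of the diffraction) lie in $L^{\circledast}$, and by the Bombieri--Taylor/consistent phase property for primitive inflations referenced in Remark~\ref{rem:model} any $k$ with $A^{}_{\nts\vL_i}(k)\neq 0$ must be such an eigenvalue; contrapositively, $A^{}_{\nts\vL_i}(k) = 0$ off $L^{\circledast}$.

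The main obstacle, in my view, is the limit-integral interchange in the first step: weak convergence of finite positive measures does not automatically imply convergence against every bounded continuous function on $\RR^{d-1}$. However, the uniform containment $\supp(\mu^{(n)}_i) \subseteq W_i$ reduces the question to weak convergence on $C(W_i)$, and any remaining worry about mass near $\partial W_i$ is removed by Proposition~\ref{prop:disjoint}, which ensures that $\mu^{}_{i}$ places no mass on the boundary.
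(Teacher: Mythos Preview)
Your proposal is correct and matches the paper's intended argument: the corollary is stated with a \qed and no explicit proof, relying on the uniform distribution established in Theorem~\ref{thm:uniform-dist} together with the standard CPS duality already invoked in Remark~\ref{rem:model}. Your fleshing out via weak convergence of the $\mu^{(n)}_i$ against a bounded continuous test function on the common compact support $W_i$, and the appeal to Remarks~\ref{rem:spec} and~\ref{rem:model} for the vanishing off $L^{\circledast}$, is exactly the mechanism the paper has in mind.
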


In the special situation that the function
$\mf$ from Remark~\ref{rem:win-cover} satisfies $\mf (y) = \mc$ for
a.e.\ $y\in W\!$, one further gets
\begin{equation}\label{eq:genampli}
    A^{}_{\nts\vL_i}(k) \, = \, 
    \frac{\dens (\cL)}{\mc}\,
    \widecheck{1^{}_{W_{i}}}(k^{\star}) \ts ,
\end{equation}
which reduces to the standard formula for model sets when
$\mc = 1$.

\begin{remark}\label{rem:eigen}
  If we interpret the FB coefficient $A^{}_{\nts\vL}(k)$ as
  a function of $\vL$, one obtains
\[ 
   A^{}_{\ts t\ts +\vL}(k) \, = \, \ee^{-2\pi\ii k t}\, A^{}_{\nts\vL}(k)\ts .
\]
Consequently, whenever the coefficient does not vanish, this defines
an eigenfunction of the strictly ergodic dynamical system $(\YY,\RR)$,
where $\YY$ is the hull of $\vL$ obtained as the closure of the
translation orbit $\{ t + \vL : t \in \RR \}$ in the local topology;
compare \cite[Ch.~4]{TAO}. The analogous connection exists with the
$A_{\vL_i}$ for $1 \leqslant i \leqslant N$, not all of which can
vanish simultaneously for any given $k\in L^{\circledast}$. This
explains why $L^{\circledast}$ is the pure point part of the dynamical
spectrum (in additive notation) and how the diffraction intensities
are connected with the eigenfunctions; see \cite{BL-review,Daniel} and
references therein for more.

Both for regular model sets and for primitive inflation tilings, it is
known that the eigenfunctions on $\YY$ have continuous
representatives; see \cite{Daniel} and references therein. This also
means that the dynamical point spectrum for such systems is the same
in the topological and in the measure-theoretic sense.  \exend
\end{remark}

Whenever constant covering of the total window is satisfied in our
setting, we have the following consequence for the FB coefficients,
where we use $c (y)$ from \eqref{eq:C-structure} and
Corollary~\ref{coro:FT-win}.

\begin{coro}
  Assume that the total window covering is almost surely constant.
  Then, the FB coefficients, for\/ $k\in L^{\circledast}$, are
  obtained as
\[
  A^{}_{\nts \vL_{i}} (k) \, = \, \dens (\vL) \, c^{}_{i} (k^{\star}) \ts ,
\]
and vanish for all other\/ $k$.\qed
\end{coro}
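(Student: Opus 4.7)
My plan is to derive the identity by substituting the structural information on $\widecheck{1^{}_{W_{i}}}$ obtained from the internal cocycle in Section~\ref{sec:cocycle} into the FB formula furnished by the previous corollary; no new analytic estimates are needed, since the substantive work has already been packaged into Theorems~\ref{thm:B-conv} and~\ref{thm:uniform-dist}.

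First, I would invoke the preceding corollary to write
\[
  A^{}_{\nts\vL_{i}}(k) \, = \,
  \myfrac{\dens (\vL_{i})}{\vol (W_{i})}\,
  \widecheck{1^{}_{W_{i}}}(k^{\star})
\]
for $k\in L^{\circledast}$; the same result also disposes of the vanishing statement for $k\in\RR\setminus L^{\circledast}$. To simplify the prefactor I would combine Remark~\ref{rem:geo}, which gives $\dens(\vL_{i})=v^{}_{i}\,\dens(\vL)$, with the relation $\vol(W_{i})=\eta\ts v^{}_{i}$ from Lemma~\ref{lem:disjoint}; the $v^{}_{i}$'s cancel, so the prefactor collapses to the $i$-independent constant $\dens(\vL)/\eta$. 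A small but important consistency check here is that the $\eta$ from Lemma~\ref{lem:disjoint} is literally the one appearing in Corollary~\ref{coro:FT-win}: evaluating $|f(y)\rangle=\eta\ts|c(y)\rangle$ at $y=0$ forces $f^{}_{i}(0)=\widecheck{1^{}_{W_{i}}}(0)=\vol(W_{i})=\eta\ts v^{}_{i}$, so the two constants indeed coincide.

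To finish, I would invoke Corollary~\ref{coro:FT-win} in the form $\widecheck{1^{}_{W_{i}}}(k^{\star})=f^{}_{i}(k^{\star})=\eta\ts c^{}_{i}(k^{\star})$. Substituting this into the display above, the two factors of $\eta$ cancel and the asserted identity $A^{}_{\nts\vL_{i}}(k)=\dens(\vL)\ts c^{}_{i}(k^{\star})$ drops out. The step is entirely formal, so there is no genuine obstacle here; the substance lies upstream, in the uniform distribution result (which is what allows one to replace the FB coefficient by a ratio of density and window volume in the first place) and in the compact convergence of the internal cocycle (which produces the vector $|\ts c(y)\rangle$ in closed form). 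It is worth flagging that the constant-covering hypothesis is not strictly used in the derivation itself; it is imposed so that the resulting formula lines up cleanly with the alternative representation~\eqref{eq:genampli}.
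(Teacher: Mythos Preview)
Your argument is correct and is precisely the intended one: the paper marks the corollary with \qed\ because it follows by combining the preceding corollary with Corollary~\ref{coro:FT-win}, exactly as you do. Your closing observation that the constant-covering hypothesis is not actually used in the derivation is also on target (and is implicitly acknowledged by the paper's remark that the covering degree does not show up in the relation).
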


The corresponding diffraction intensities follow from
Eq.~\eqref{eq:intens}.  Note that the covering degree does \emph{not}
show up in this relation.  The intensity at any wave number
$k\in L^{\circledast}$ can efficiently be approximated by truncating
the infinite product representation for $C(k^{\star})$ and calculating
the amplitudes as explained above.

At this point, we turn to some applications of the cocycle method to
concrete inflation systems on the real line, which will illustrate the
above results.

\section{Examples -- the Pisa substitutions}\label{sec:examples}

Let us introduce an interesting family of primitive inflations as
follows, based on the alphabet
$\cA = \{ a^{}_{1}, \ldots , a^{}_{d} \}$ with $d\geqslant 2$. The
explicit rule is given by $a^{}_{i} \mapsto a^{}_{1} a ^{}_{i+1}$ for
$1\leqslant i \leqslant d-1$, together with
$a^{}_{d} \mapsto a^{}_{1}$. In short, we have
$\varrho^{}_{d} = (a^{}_{1} a^{}_{2}, a^{}_{1} a^{}_{3} , \ldots ,
a^{}_{1} a^{}_{d}, a^{}_{1} )$. We call
$\{ \varrho^{}_{d} : d \geqslant 2 \}$ the family of \emph{Pisa
  substitutions}.  For $d=2$, this is the classic Fibonacci rule,
while $d=3$ is known as the Tribonacci substitution in the literature;
see \cite{PFBook} and references therein.

Let us first collect some general results for this family. The
substitution matrix reads
\[
  M_{d} \, = \, \begin{pmatrix} 1 & 1 & 1 & \dots & 1 & 1 \\
    1 & 0 & 0 & \dots & 0 & 0 \\ 0 & 1 & 0 & \dots & 0 & 0 \\
    0 & 0 & 1 & \dots & 0 & 0 \\
    \vdots &  \vdots & & \ddots &  & \vdots \\
    0 & 0 & 0 & \dots & 1 & 0
 \end{pmatrix}
\]
with $\det (M_{d}) = (-1)^{d-1}$. Note that $M_d$ is not normal for
$d\geqslant 3$, whence we need Proposition~\ref{prop:equi} in the
generality stated and proved. The characteristic polynomial of $M_{d}$
is
\[
  p^{}_{d}(x) \, =\, x^d - (1 + x + x^2 + \dots + x^{d-1})\ts .
\]
By \cite[Thm.~2]{Brauer}, $p^{}_{d}$ is irreducible, with one root
$>1$, which is the PF eigenvalue $\lambda^{}_{d}$ of $M_{d}$, and all
others inside the unit disk. So, $\lambda^{}_{d}$ is a PV unit of
degree $d$, which satisfies $\lim_{d\to\infty}\lambda^{}_{d}=2$. The
discriminant of $p^{}_{d}$ for $d\geqslant 2$ is given by
\[
   \Delta^{}_{d} \, = \, 
   (-1)^{\frac{d(d+1)}{2}}\, \frac{(d+1)^{d+1} -2 \ts (2d)^d}{(d-1)^2} \ts ,
\]
which is due to M.~Alekseyev; see \cite[A{\ts}106273]{OEIS} for details.

The right PF eigenvector is denoted by $|\ts v \rangle$ as before,
where we now drop the dependence on $d$ for ease of notation.  When
normalised as $\langle 1 | \ts v \rangle = 1$, it reads
\[
  \lvert\ts v\rangle \, = \, 
  \bigl(\lambda^{-1},\lambda^{-2},\lambda^{-3},\dots,
   \lambda^{-d+1},\lambda^{-d}\bigr)^{T}.
\]
The corresponding left PF eigenvector $\langle u \ts |$ is normalised
such that $\langle u \ts | \ts v \rangle = 1$, which gives
\[
  \langle u\ts\rvert \, = \, 
  \frac{\lambda^d - \lambda}{2 \lambda^d - (d+1)\lambda + (d-1)}
  \biggl(\lambda,\sum_{j=0}^{d-2}\lambda^{-j},
  \sum_{j=0}^{d-3}\lambda^{-j},\ldots,
  1+\lambda^{-1},1  \biggr).
\]
Here, the normalisation prefactor was simplified via the algebraic
relation for $\lambda$ from $p^{}_{d} (\lambda)=0$, which in particular gives
$\lambda^d (\lambda - 1) = \lambda^d - 1$. Note that the vector on the
right-hand side is a canonical choice for the natural interval
lengths, which all lie in $\ZZ[\lambda]$.  The shortest interval then
has length $1$, and it is straightforward to show that no proper,
$\lambda$-invariant submodule of $\ZZ[\lambda]$ contains all control
point positions.  Here, the density of the resulting point set $\vL$
is 
\[
   \dens(\vL) \, = \,  
   \frac{\lambda^d - \lambda}{2 \lambda^d - (d+1)\lambda + (d-1)}\ts ,
\]
with $\lim_{d\to\infty}\dens(\vL)=\frac{1}{2}$. 

When working with the $\ZZ$-module $L=\ZZ[\lambda]$, one can define
the dual module $L^{\circledast}$ with respect to the quadratic form
$\tr (xy)$ as explained in Remark~\ref{rem:spec}, namely
\begin{equation}\label{eq:def-Lmod}
  L^{\circledast} \, = \, \bigl\{ y\in\QQ(\lambda) : \tr(xy)\in\ZZ 
  \text{ for all } x\in L  \bigr\} .
\end{equation}
For our family, one finds $L^{\circledast}=\vartheta\ts L$ with
\[
   \vartheta \, = \, \biggl(d\,\lambda^{d-1} - 
   \sum_{m=0}^{d-2} (m+1)\lambda^{m}\biggr)^{\! -1} \, \in\:
   \myfrac{1}{\Delta_{d}} \, \ZZ[\lambda]\ts .
\]

We are now set to look at some special cases in more detail.

\subsection{The Fibonacci tiling}\label{sec:Pisa-Fib}

For $\varrho^{}_{2} = (ab,a)$, which we write with the binary alphabet
$\cA = \{ a,b \}$ for simplicity, the inflation tiling with interval
lengths $\tau=\frac{1}{2}(1+\sqrt{5}\,)$ for $a$ and $1$ for $b$ is
well studied; see \cite[Sec.~9.4.1]{TAO} and references therein. For
the standard fixed point of the square of $\varrho^{}_{2}$, with
central seed $a|a$, one obtains the windows
$W_{\nts a}=(\tau-2,\tau-1]$ and $W_b=(-1,\tau-2]$, compare
\cite[Ex.~7.3]{TAO}, and can calculate their Fourier transforms
immediately. With $\sinc (z) = \frac{\sin (z)}{z}$, they read
\[
  \widecheck{1^{}_{W^{}_{\nts a}}} (y)
  \, = \, \ee^{\pi \ii y (2 \tau -3)} \sinc (\pi y)
  \quad \text{and} \quad \widecheck{1^{}_{W_b}}
  (y) \, = \, \myfrac{\ee^{\pi \ii y (\tau - 3)}}{\tau} \ts \sinc
  \Bigl(\myfrac{\pi y}{\tau} \Bigr) .
\]
Here, it does not matter whether we take open, half-open or closed
intervals, as their characteristic functions are equal as
$L^{1} $-functions. Consequently, this detail is spectrally invisible.

The internal Fourier matrix and cocycle for this example read 
\[
    \sB(y) \, = \, \begin{pmatrix} 1 & 1 \\ 
    \ee^{2\pi\ii\sigma y} & 0 \end{pmatrix} \quad\text{and}\quad
    \sB^{(n)}(y) \, = \, \sB(y) \sB(\sigma y) \cdots 
    \sB(\sigma^{n-1}y)\ts , 
\]
with\footnote{Here, $\sigma$ is a number which should not be confused
  with the Galois isomorphisms from Section~\ref{sec:Minkowski}.}
$\sigma=\tau^{\star}=1-\tau$, so $\lvert \sigma \rvert = - \sigma$.
We find the relation
\[
   c^{}_{b}(y) \,= \, \lvert \sigma \rvert \ts
       \ee^{2\pi\ii\sigma y} c^{}_{a}(y)
\]
expressing $c^{}_{b}$ in terms of $c^{}_{a}$, while the latter is
obtained as the limit
\[
    c^{}_{a}(y) \, = \lim_{n\to\infty} q^{}_{n} (y)\ts ,
\]
where the trigonometric polynomials $q^{}_{n}$ are recursively defined
by
\[
    q^{}_{n+1}(y) \,=\, \lvert \sigma \rvert \, q^{}_{n} (\sigma y) +
    \sigma^{2} \ee^{2\pi\ii\sigma^2 y} q^{}_{n-1}(\sigma^2 y)\ts ,
\]
with initial conditions $q^{}_{1} = q^{}_{0} = \lvert \sigma \rvert$.
From here, it is not difficult to check that
\[
    c^{}_{a} (y) \, = \, \lvert \sigma \rvert \,
    \widecheck{1^{}_{W^{}_{\nts a}}} (y)
    \quad \text{and} \quad
    c^{}_{b} (y) \, = \, \lvert \sigma \rvert \,
    \widecheck{1^{}_{W_{b}}} (y)
\]
as it must.  The convergence of the recursive formula for $c^{}_{a}$
is exponentially fast. Though there is no need for this alternative
approach in this case, it provides a consistency check and some
additional insight into the recursive structure of the spectrum.

\subsection{(Twisted) Tribonacci}\label{sec:Pisa-Tri}

Here, we compare two different substitution rules for $d=3$, which
share the same substitution matrix, $M=M_{3}$.  These are the
Tribonacci substitution $\varrho^{}_{3} \defeq (a b, a c, a)$ and its
twisted counterpart, $\varrho^{\ts \prime}_{3} \defeq (b a, a c, a)$,
with the alphabet $\{ a,b,c \}$.  Further permutations of letter
positions do not define new hulls, as they are conjugate to one of
these two.  Both lead to inflation systems with fractal windows
in their model set description, as they must due to a result by
Pleasants \cite[Prop.~2.35]{Peter}, but the twisted version is 
more tortuous; see Figure~\ref{fig:triwin} below, and compare
\cite[Figs.~7.5 and~7.8]{PFBook}, where a different coordinate system
is used. The fundamental group of the windows in the twisted case is
huge, while the windows of the untwisted case are still simply
connected, as in other examples such as the inflation
tiling that underlies the Kolakoski-$(3,1)$ sequence \cite{BS}.

The field $\QQ (\lambda)$ is cubic. For ease of notation, we define
$\kappa^{}_{\pm}=\bigl(19\pm 3\sqrt{33}\,\bigr)^{\frac{1}{3}}$. With
this, we find that the PF eigenvalue is
\[
  \lambda \,=\, \tfrac{1}{3}\bigl(1 +
  \kappa^{}_{+}+\kappa^{}_{-}\bigr)
  \, \approx \, 1.839{\ts}287 .
\]
The characteristic polynomial is cubic, $p(x)=x^3 - x^2 - x - 1$, with
discriminant $\Delta=-44$. The remaining two eigenvalues form a
complex conjugate pair $\alpha,\overline{\alpha}$ with
$\lvert\alpha\rvert^2=\lambda^{-1}=\lambda^2-\lambda-1$, where we
assume $\alpha$ to be the one with positive imaginary part. One also
has $\lambda^{-2} = \lambda (2-\lambda)$.  Further, one finds
$\real(\alpha)=(1-\lambda)/2$ and $\real(\alpha^2)=(3-\lambda^2)/2$,
while
$\imag(\alpha)=\frac{1}{2\sqrt{3}}\bigl(\kappa^{}_{+}-\kappa^{}_{-}\bigr)$
and $\imag(\alpha^2)=(1-\lambda)\imag(\alpha)$. From the discriminant
and Vieta's theorem, one also gets
\[
  \imag (\alpha) \, = \, \frac{\sqrt{11}}
       {3 \lambda^2 - 2 \lambda - 1}
       \, = \, \frac{\sqrt{11}}{22} \ts
       \bigl(-4 \lambda^2 + 9 \lambda + 1\bigr) \ts . 
\]

The natural tile lengths can be chosen as
$(\lambda,\lambda^2-\lambda,1)\approx (1.839,1.544,1)$, which means
that all control point positions lie in the rank-$3$ $\ZZ$-module
$L = \langle 1, \lambda, \lambda^2 \rangle^{}_{\ZZ}$, but in no proper
submodule. The lattice for the CPS, $\cL$, is obtained from the
Minkowski embedding of $L$ into $3$-space.  A canonical choice for the
basis matrix of $\cL$ and its dual, $\cL^*$, is then given by
\[
    \cB\, = \, \begin{pmatrix}
  1 & \lambda & \lambda^2 \\
  1 & \real(\alpha) & \real(\alpha^2) \\
  0 & \imag(\alpha) & \imag(\alpha^2)
  \end{pmatrix} \quad\text{and}\quad
  \cB^{*} \, = \, \frac{\imag (\alpha)}{\sqrt{11}}
   \begin{pmatrix}
   \lambda^2-\lambda-1 & \lambda - 1  & 1  \\
   2 \lambda^2 - \lambda & 1-\lambda & -1 \\
   \frac{3 \lambda - \lambda^2}{2 \imag (\alpha)}&  
   \frac{3 (\lambda^2-1)}{2 \imag (\alpha)} &
   \frac{1-3 \lambda}{2 \imag (\alpha)}
\end{pmatrix}
\]
with
$\det(\cB)=\imag(\alpha)\ts (3\lambda^2 - 2 \lambda -1) =
\sqrt{11}$. From the first row of $\cB^{*}$, one can now extract the
Fourier module in our setting from an independent calculation, which
gives
\[
   L^{\circledast} \, = \, \vartheta\, \langle 
   \lambda^2-\lambda-1, \lambda - 1, 1 \rangle^{}_{\ZZ}
   \, = \, \vartheta \ts L \ts ,
\]
with $\vartheta = (3 \lambda^2 - 2 \lambda - 1)^{-1}$, in agreement
with our general formula \eqref{eq:def-Lmod}. The Abelian group
$L^{\circledast}$ is also the dynamical spectrum (in additive
notation) of our systems; compare Remark~\ref{rem:spec}.  In fact,
Tribonacci and twisted Tribonacci are metrically isomorphic by the
Halmos--von Neumann theorem, but have rather different
eigenfunctions. Also, they are obviously \emph{not} mutually locally
derivable (MLD) from one another; see \cite[Sec.~5.2]{TAO} for
background. Moreover, they are not topologically conjugate either, as
they can be distinguished via invariants of gauge-theoretic origin
\cite{Franz}, or by dimension arguments as follows.

\begin{figure}
\centerline{\includegraphics[width=\textwidth]{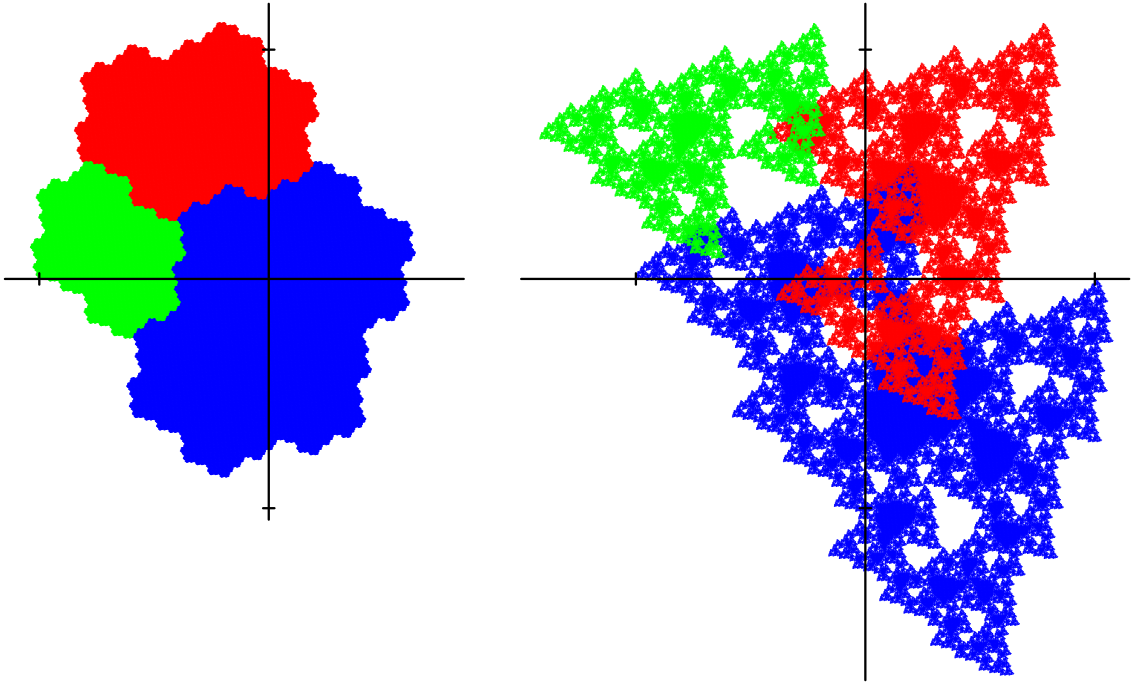}}
\caption{\label{fig:triwin}Rauzy fractals for the Tribonacci inflation
  (left panel) and its twisted sibling (right panel), shown at the
  same scale. They are the windows for the points of type $a$ (blue),
  $b$ (red) and $c$ (green). The coordinate axes are those emerging
  from the Minkowski embedding, with ticks indicating unit distances.}
\end{figure}

\begin{remark}
  The Hausdorff dimension of the fractal boundary of the Tribonacci
  windows is known; compare \cite{Ali} as well as
  \cite[Ex.~4.2]{FFIW}. It can be calculated as a similarity
  dimension, which is the real solution
  $s^{}_{\mathrm{H}}$ to the equation
  $\lvert\alpha\rvert^{4s^{}_{\mathrm{H}}}+
  2\ts\lvert\alpha\rvert^{3s^{}_{\mathrm{H}}}=1$. This gives
\[
  s^{}_{\mathrm{H}} \, = \,  2\,\frac{\log(b)}{\log (\lambda)}\, 
  \approx \, 1.093\ts 364\ts ,
\]
where $b$ is the positive real root of $x^4-2\ts x-1$.
  
Likewise, for twisted Tribonacci, the Hausdorff dimension of the
window boundary is given by \cite[Ex.~4.3]{FFIW}
\[
    s^{}_{\mathrm{H}} \, = \,  2\,\frac{\log(b)}{\log (\lambda)}\, 
   \approx \, 1.791\ts 903\ts ,
\]
where $b$ now is the positive real root of $x^6-x^5-x^4-x^2+x-1$, as
one derives from the corresponding graph-directed IFS for the
boundary; see also \cite[Sec.~6.9]{Bernd}.

The much larger Hausdorff dimension for the twisted case corresponds
to a slower decay of the Fourier transform; see \cite[App.~B]{LGJJ}
for an explicit one-dimensional example for which the Fourier
transform shows a power-law decay with exponent $1-d_{B}$, where
$d_{B}$ is the fractal dimension of the boundary, and \cite{GL} for an
interesting asymptotic scaling analysis of such coefficients.  It
would be useful to establish a general result along these lines, which
is of recent interest also with respect to a refinement of the notion
of complexity \cite{FG}. \exend
\end{remark}

If $\sigma^{}_{1} \! : \, \QQ (\lambda) \xrightarrow{\quad} \QQ (\alpha)$
is the field isomorphism induced by $\lambda \mapsto \alpha$, one
determines the $\star$-map of $k\in L^{\circledast}$ as
$k \mapsto k^{\star} \defeq \bigl(\real(\sigma^{}_{1}(k)),
\imag(\sigma^{}_{1}(k))\bigr)^{T}$.  For
$k = k^{}_{p,q,r} \defeq \vartheta \ts (p + q \lambda + r \lambda^2)$,
this gives
\[
  k^{\star}_{p,q,r} \, = \, \begin{pmatrix}
    \frac{1}{44} \bigl( (-p+ 4 q + 17 r) -
       (9 p - 3 q + r) \lambda +
       (4 p - 5 q - 2 r) \lambda^2 \bigr) \\
    \frac{1}{4 \sqrt{11}} \bigl( (-p + 2 q + r) +
       3 (p+q+r) \lambda -
       (3 q + 2 r) \lambda^2 \bigr) \end{pmatrix},
\]
where the integers $p,q,r$ are known as the \emph{Miller indices} of
the corresponding Bragg peak in crystallography.

\begin{figure}
\centerline{\includegraphics[width=0.85\textwidth]{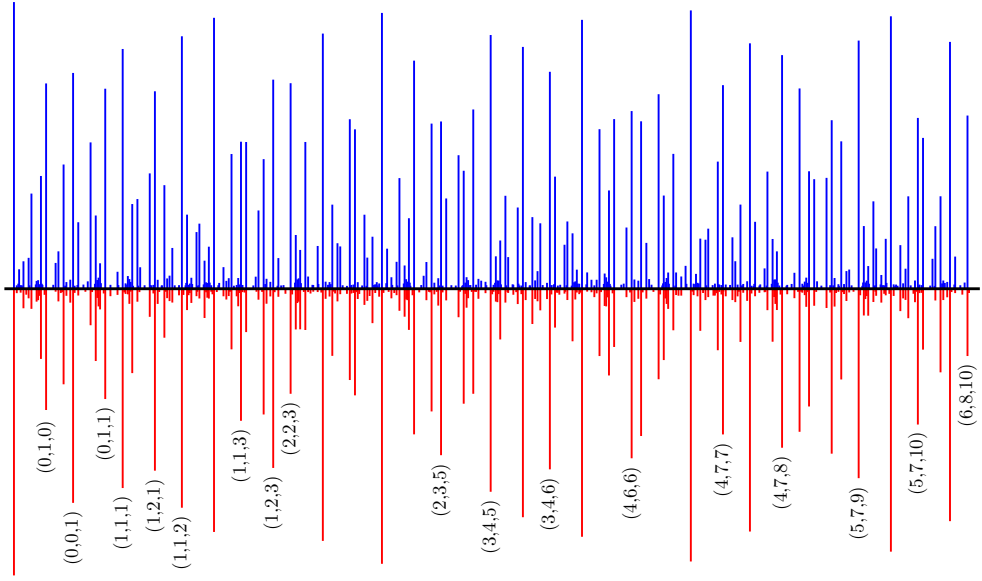}}
\caption{\label{fig:tribodiff}Diffraction intensities (Bragg peaks)
  for the Tribonacci point set (upper part, blue) and for its twisted
  counterpart (lower part, red). Displayed are the relevant peaks for
  $k\in L^{\circledast}\cap [0,10]$, with the intensity represented by
  the length of the line. The left-most peak is located at the origin
  and has height $\dens(\vL)^2$, where
  $\dens(\vL)=\frac{1}{22}(5+\lambda+2\lambda^2)\approx 0.618\ts
  420$. Selected peaks are labelled by their Miller index triples.}
\end{figure}

In Figure~\ref{fig:tribodiff}, we compare the peaks of the pure point
diffraction measure for the Tribonacci point set and its twisted
sibling. The support is the same, but the intensities show
characteristic differences. The latter are calculated as
\[
    I(p,q,r) \, = \, 
    \Bigl(\myfrac{5+\lambda+2\lambda^2}{22}\Bigr)^{\! 2} \,
    \left| \big\langle 1 \ts | \ts C(k^{\star}_{p,q,r}) \ts |\ts v
    \big\rangle\right|^2
\]
with the appropriate matrix function $C$ for the two cases. The peaks
of the twisted case are often smaller than their untwisted
counterparts. Note also that an approximation of the diffraction
measure by exponential sums of large patches suffers from slow
convergence, in particular for the twisted version, as was previously
observed and discussed for the plastic number PV inflation
\cite{ICQ}. This reference also contains an illustration of the full
Fourier transform of the plastic number Rauzy fractal, which shows
similar features as our case at hand.

\subsection{The quartic case}

Let us briefly consider $\varrho^{}_{4}=(01,02,03,0)$ on
$\cA=\{0,1,2,3\}$, where $\QQ (\lambda)$ is a quartic field.  Beyond
the PF eigenvalue $\lambda\approx 1.927\ts 562$, $M$ has one real root
$\mu$, with $\mu\approx -0.774\ts 804$, and a complex conjugate pair
$\alpha,\overline{\alpha}$, with
$\alpha\approx -0.076\ts 379 + 0.814\ts 704\ts\ts \ii$.  For the
natural choice of interval lengths,
$(\lambda,\lambda^2-\lambda,\lambda^3-\lambda^2-\lambda,1)$, the
Fourier module becomes
\[
   L^{\circledast} \, = \, \vartheta \,
   \langle \lambda^3-\lambda^2-\lambda-1,  \lambda^2-\lambda-1,
   \lambda-1, 1 \rangle^{}_{\ZZ} \, = \, 
   \vartheta\, \ZZ[\lambda]\ts ,
\]
where 
\[
  \vartheta\, = \, \bigl(\lambda^3-3\lambda^2-2\lambda-1\bigr)^{-1}
  \, = \, \myfrac{1}{563}
  \bigl(10+157\lambda-103\lambda^2+16\lambda^3\bigr).
\]
This follows from \eqref{eq:def-Lmod} and can be verified via the
quadratic form $\tr(xy)$, observing $\tr(1)=4$ and
$\tr(\lambda^m)=2^m-1$ for $m\in\{1,2,3\}$.  In analogy to before, we
parametrise $k\in L^{\circledast}$ by a quadruple $(p,q,r,s)$ of
Miller indices.

The internal Fourier matrix reads
\[
   \sB(y) \, = \, 
   \begin{pmatrix} 1 & 1 & 1 & \, 1\, \\
    e(y) & 0 & 0 & 0 \\
   0 & \! e(y)\! & 0 & 0 \\
   0 & 0 & e(y) & 0 
   \end{pmatrix}
\]
with
$e(y)\defeq \exp\bigl(2\pi\ii (\mu\ts\ts y^{}_{1} + \real(\alpha)\ts
y^{}_{2} + \imag(\alpha)\ts y^{}_{3})\bigr)$. A calculation analogous
to our previous ones leads to the diffraction measure as illustrated
in Figure~\ref{fig:case4}. Let us briefly mention that, using the
methods from \cite[Cor.~4.118 and Prop.~4.122]{Bernd}, one can derive
an upper bound of $2.327$ for the Hausdorff dimension of the window
boundaries \cite{BerndPC}. It is no problem to twist $\varrho^{}_{4}$,
as we did for the Tribonacci case, but we leave further details to
the interested reader.

\begin{figure}
\centerline{\includegraphics[width=0.85\textwidth]{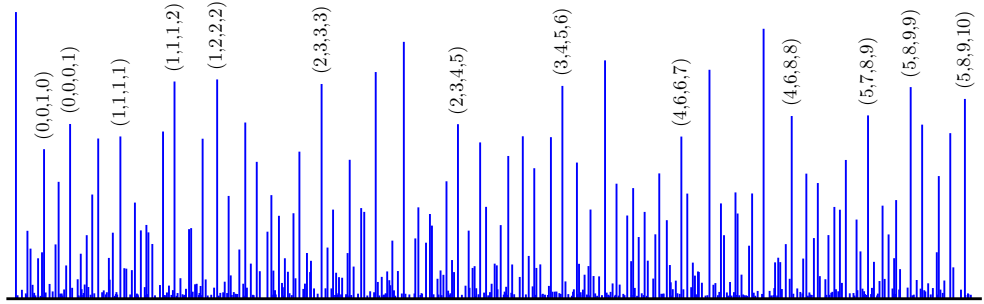}}
\caption{\label{fig:case4}Illustration of the pure point diffraction
  spectrum for the $d=4$ Pisa inflation, for
  $k\in L^{\circledast}\cap [0,10]$. The left-most peak is located at
  $0$ and has height $\dens(\vL)^2$, with
  $\dens(\vL)=\frac{1}{563}\ts
  (86-\lambda+15\lambda^2+25\lambda^3)\approx 0.566\ts 343$.  Selected
  peaks are labelled by their Miller index quadruples.}
\end{figure}

\section{Twisted extensions of Fibonacci chains with mixed
  spectrum}\label{sec:twisted}

Let us close with a simple system with mixed spectrum. It is based on
the idea, taken from \cite{BG}, of a twisted extension of
$\varrho^{}_{\mathrm{F}} = (ab,a)$, which is $\varrho^{}_{2}$ from
Section~\ref{sec:Pisa-Fib}, with a \emph{bar swap symmetry}. As such,
it works with the extended alphabet
$\cA = \big\{ a, \bar{a}, b , \bar{b} \big\}$, where we consider
\[
  \varrho \, = \, (ab,\bar{a}\bar{b},\bar{a},a) \ts .
\]
The natural interval lengths are those of the Fibonacci tiling, so
$\tau$ for $a$ and $\bar{a}$, as well as $1$ for $b$ and $\bar{b}$.
Also, by identifying $a$ with $\bar{a}$ and $b$ with $\bar{b}$, one
sees that the system possesses the Fibonacci tiling as a topological
factor, where the factor map is $2:1$ almost everywhere, but not
everywhere \cite{Franz}.  Consequently, we have a non-trivial point
spectrum, together with a continuous component. The latter, by an
application of the renormalisation methods from
\cite{BGM,Neil-thesis,Neil}, must be singular continuous.

The substitution matrix of $\varrho$ has spectrum
$\{ \tau, 1-\tau, \frac{1}{2} (1 \pm \ii \sqrt{3} \,) \}$, and a
reducible characteristic polynomial. Only the factor with $\tau$ as a
root is relevant, and one checks that the same embedding as for the
Fibonacci tiling can be used. Here, the embedding method produces
covering supersets, where the contractive IFS on $(\cK \RR)^4$ reads
\begin{align*}
  W^{}_{\nts a} & \, = \, \sigma W^{}_{\nts a} \cup 
              \sigma W^{}_{\bar{b}} \, , &
              W^{}_{b} & \, = \, \sigma W^{}_{\nts a} + 
              \sigma \ts , \\
  W^{}_{\nts \bar{a}} & \, = \, \sigma W^{}_{\nts \bar{a}} \cup 
              \sigma W^{}_{b} \, , &
              W^{}_{\bar{b}} & \, = \, \sigma W^{}_{\nts \bar{a}} + 
             \sigma \ts ,
\end{align*}
with $\sigma = \tau^{\star}$ as before. The unique solution with
compact subsets of $\RR$ is
\begin{equation}\label{eq:f-win}
\begin{split}
  W^{}_{\nts a} \, & = \, W^{}_{\nts \bar{a}} 
  \, = \, [ -\sigma^2, - \sigma ]
  \, = \, [\tau-2,\tau-1]
  \quad\text{and} \\[1mm]
  W^{}_{b} \, & = \, W^{}_{\bar{b}} \, = \, [ -1, -\sigma^2]
  \, = \, [-1,\tau-2] \ts ,
\end{split}  
\end{equation}
as can easily be verified by direct computation. Here, we are in the
situation that $\mf (y) = 2$ for a.e.\ $y\in [-1,\tau-1]$, and
uniform distribution is preserved both in the individual windows,
by Theorem~\ref{thm:uniform-dist}, and in the total window.

Note that the point sets $\vL_a^{\star}$ and $\vL_{\bar{a}}^{\star}$
are disjoint, but have the same closure, and analogously for
$\vL_b^{\star}$ and $\vL_{\bar{b}}^{\star}$.  The right-hand sides of
the window equations are measure-disjoint by Lemma~\ref{lem:disjoint},
which means that the cocycle approach can be applied, with the window
covering degree being $\mc =2$.  Since uniform distribution is
satisfied here by Theorem~\ref{thm:uniform-dist}, the FB
coefficients from \eqref{eq:FB} can be calculated by means of
\eqref{eq:genampli}.  For weights $h^{}_{\alpha}\in\CC$ with
$\alpha\in\{a,\bar{a},b,\bar{b}\}$, the pure point part of the
diffraction reads
\[
  (\widehat{\gamma})^{}_{\mathsf{pp}}\, =
  \sum_{k\in L^{\circledast}} \Big\lvert
  {\textstyle \sum\limits_{\alpha}}\, h^{}_{\alpha} \ts A^{}_{\alpha}(k)
  \Big\rvert^{2} \, \delta^{}_{k} \ts ,
\]
with the additional part of the diffraction measure being singular
continuous.  \smallskip

As was noticed by G\"{a}hler \cite{Franz}, one can employ a partial
return word coding to arrive at another inflation which defines a
tiling system that is MLD with the above. Concretely, consider the
alphabet $\{ A,B,C,D \}$ and the inflation
$\varrho^{\ts\prime} = (AB,D,C{\nts}A,C)$. Here, $A$ and $B$
correspond to $a$ and $b$, while $C$ replaces $\bar{a}\bar{b}$ and $D$
replaces each $\bar{a}$ that is not followed by a $\bar{b}$. This gives
the substitution matrix
\[
  \begin{pmatrix}
    1 & 0 & 1 & 0 \\ 1 & 0 & 0 & 0 \\
    0 & 0 & 1 & 1 \\ 0 & 1 & 0 & 0
    \end{pmatrix}
\]
with the same eigenvalues as above.  The natural interval lengths are
$(\tau, 1, \tau+1, \tau)$, in agreement with the local derivation rule
just stated.

The resulting window equations read
\[
  W^{}_{A} \, = \, \sigma W^{}_{A} \cup ( \sigma W^{}_{C}
  + \sigma^2 ) \, , \quad W^{}_{B} \, = \,
    \sigma W^{}_{A} \nts + \sigma
  \, , \quad W^{}_{C} \, = \, \sigma W^{}_{C} \cup \sigma W^{}_{D}
  \, ,\quad W^{}_{D} \, = \, \sigma W^{}_{B} \ts ,
\]
which constitute a contractive IFS on $(\cK \RR)^4$
with unique solution
\[
  W^{}_{A} \, = \, [\tau \nts - \nts 2,
      \tau \nts - \nts 1] \, , \quad
      W^{}_{B} \, = \, [\ts -1, \tau \nts - \nts 2] \, , \quad
  W^{}_{C} \, = \, [\tau \nts - \nts 2,
       2 \tau \nts - \nts 3] \, , \quad
  W^{}_{D} \, = \, [ 2\tau \nts - \nts 3,
          \tau \nts - \nts 1] \ts .
\]
The total window is $[-1, \tau-1]$ as in the twisted Fibonacci
example, but the window function $\mf$ now is a step function
as induced by
\begin{equation}\label{eq:winpic}
  \raisebox{-25pt}{\includegraphics[width=0.4\textwidth]{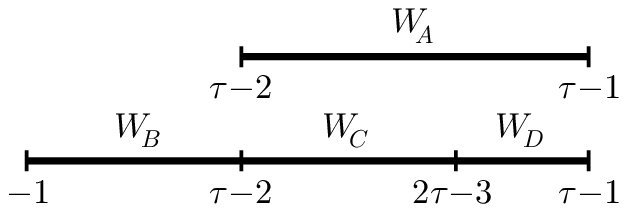}}
\end{equation}

As was further analysed by G\"{a}hler \cite{Franz}, there is also a
maximal topological pure point factor such that the factor map is
\mbox{{\ts}2{\,}:{\ts}1\ts} everywhere.  Using the alphabet
$\{ 0,1,2,3 \}$, this maximal pure point factor is given by the
inflation rule
\[
  \widetilde{\varrho} \, = \, (12,13,1,0) \ts ,
\]
where $0$ and $1$ stand for intervals of length $\tau$, while those of
type $2$ and $3$ have unit length. The factor map can most easily be
given as a block map, where words of length $2$ at position $n$ are
mapped to an element of the new alphabet at the same position, namely
\begin{equation}\label{eq:block}
  aa, \bar{a} \bar{a} \, \mapsto \, 0 \, , \quad
  ab, \bar{a}\bar{b}, a \bar{a}, \bar{a} a
  \, \mapsto \, 1 \, , \quad
  ba, \bar{b}\bar{a} \, \mapsto \, 2 \, , \quad
  b \bar{a}, \bar{b} a \, \mapsto \, 3 \ts ,
\end{equation}
and correspondingly for the tilings, where the resulting mapping
is called a \emph{local derivation rule}; see \cite[Sec.~5.2]{TAO}
for details.

\begin{figure}
\centerline{\includegraphics[width=0.8\textwidth]{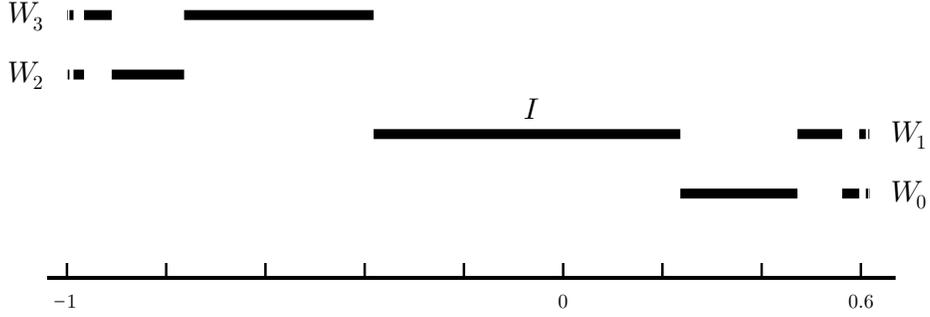}}
\caption{\label{fig:fib4win}Illustration of the four windows for the
  primitive inflation rule
  \mbox{$\widetilde{\varrho}=(12,13,1,0)$}. Note that $W_{0}\cup
  W_{1}=[-\sigma^2,-\sigma]$ and $W_{2}\cup W_{3}=[-1,-\sigma^2]$,
  while $I=[-\sigma^2,-\sigma^3]$.}
\end{figure}

Conversely, one proceeds in two steps. First, any given sequence from
the (symbolic) hull of $\widetilde{\varrho}$ is mapped to a sequence
in $\{a,b\}^{\ZZ}$ by $0,1\mapsto a$ and $2,3\mapsto b$. In the second
step, choose one position and decide whether to place a bar on the
letter or not. Then, the bar status of the two neighbouring symbols is
uniquely determined from the original block map \eqref{eq:block}, read
backwards. Inductively, this fixes the entire sequence. Since the only
choice was the initial bar, this shows that the block map
\eqref{eq:block} is globally \mbox{\ts2{\,}:{\ts}1\ts}. Once again,
this block map transfers to a local derivation rule for the
corresponding tilings.

The new inflation rule $\widetilde{\varrho}$ leads to a regular model
set, with window equations
\[
  W^{}_{0} \, = \, \sigma W^{}_{3} \, , \quad
  W^{}_{1} \, = \, \sigma W^{}_{0} \cup \sigma W^{}_{1}
  \cup \sigma W^{}_{2} \, , \quad
  W^{}_{2} \, = \, \sigma W^{}_{0} + \sigma \, , \quad
  W^{}_{3} \, = \, \sigma W^{}_{1} + \sigma \ts .
\]
Due to the factor map, we immediately know that
$W^{}_{0} \cup W^{}_{1} = W^{}_{\nts a}$ and
$W^{}_{2} \cup W^{}_{3} = W^{}_{b}$ with $W^{}_{\nts a}$ and
$W^{}_{b}$ from \eqref{eq:f-win}.  The unique solution can be
determined by first observing that each $W_{i}$ with $i\ne 1$ can be
expressed in terms of $W^{}_{1}$. This gives a rescaling equation for
$W^{}_{1}$ alone, namely
\[
  W^{}_{1} \, = \, \sigma W^{}_{1} \cup
  \bigl( \sigma^3 W^{}_{1} + \sigma^3 \bigr) \cup
  \bigl( \sigma^4 W^{}_{1} + \sigma^4 + \sigma^2 \bigr) 
  \, = \, I \cup g(W^{}_{1}) \ts ,
\]
where $I = (W^{}_{2} \cup W^{}_{3}) - \sigma = [-\sigma^2, -\sigma^3]$
and $g (x) = \sigma^4 x + \sigma^4 + \sigma^2$. 

This leads to $W^{}_{1} = I \cup g(I) \cup g(g(I)) \cup \ldots$ which
results in the formula
\begin{equation}\label{eq:W1}
  W^{}_{1} \, = \bigcup_{n\geqslant 0}
  \bigl( \sigma^{4n} [-\sigma^2, -\sigma^3] +
  \sigma (\sigma^{4n} -1) \bigr)  ,
\end{equation}
while the other windows follow from here via affine mappings. All four
windows are illustrated in Figure~\ref{fig:fib4win}, each comprising
countably many disjoint intervals.

The explicit expression for $W^{}_{1}$ in \eqref{eq:W1} leads to the
(inverse) Fourier transform of its characteristic function in the form
\begin{equation}\label{eq:win-sum}
   f^{}_{1}(y) \, = \, \widecheck{1^{}_{W_{1}}}(y) 
   \, = \, - \sum_{n=0}^{\infty} 
   \sigma^{4n+1} \ee^{-\pi \ii (2\sigma + 2\sigma^{4n} + \sigma^{4n+1}) y} 
   \sinc(\pi\sigma^{4n+1}y)
\end{equation}
with $f^{}_{1}(0)=\tau/\sqrt{5}=\frac{\tau+2}{5}$, which is the total
length of the window $W^{}_{1}$; see Figure~\ref{fig:sincpic} for a
comparison of $\lvert f^{}_{1}\rvert$ with the function
$\big\lvert \frac{\tau+2}{5}\ts \sinc\bigl(\frac{\tau+2}{5}\ts\pi
y\bigr) \big\rvert$.

\begin{figure}
\centerline{\includegraphics[width=0.8\textwidth]{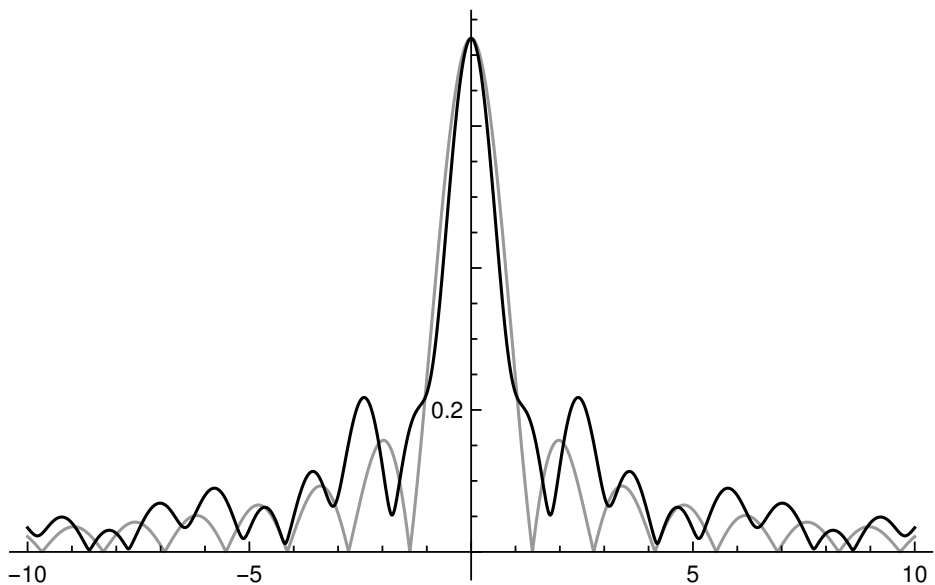}}
\caption{\label{fig:sincpic}Illustration of
  $\,\lvert\widecheck{1^{}_{W_{1}}}(y)\rvert\ts$ (black curve) in
  comparison with the modulus of the Fourier transform of an interval
  of length $\frac{\tau+2}{5}$ (grey curve), which is the value at $0$
  for both functions.}
\end{figure}

For the cocycle approach, we first note that the internal Fourier
matrix reads
\[
      \sB(y) \, = \, \begin{pmatrix}
      0 & 0 & 0 & \, 1 \, \\
      1 & 1 & 1 & 0 \\
      \!\ee^{2\pi\ii\sigma y}\! & 0 & 0 & 0 \\
      0 & \!\ee^{2\pi\ii\sigma y}\! & 0 & 0 
      \end{pmatrix}
\]
with $\sB(0)=M$ as usual. The frequency-normalised right PF
eigenvector is
\[
   |\ts v\rangle \, = \, \tfrac{1}{5} (-1 - 3\sigma, 1 - 2\sigma, 
   3 + 4 \sigma, 2 + \sigma )^T \, \approx\, 
   (0.171, 0.447, 0.106, 0.276)^T,
\]
where one has $\vol (W^{}_{i}) = \tau \ts v^{}_{i}$ for the total
window lengths.  With
$C(y)=\lim_{n\to\infty}\lvert\sigma\rvert^n \sB^{(n)}(y)$, one gets
$f^{}_{1}(y) = \tau \, \langle \ts 0,1,0,0\ts\ts | \ts C(y) \ts |\ts
v\rangle$, where the convergence of the underlying matrix product is
exponentially fast. Here, one can then study the rate of convergence
in comparison to the alternative formula in \eqref{eq:win-sum}.

\section{Outlook}

It is possible to extend our approach to inflation tilings in higher
dimensions, if the inflation multiplier is a PV unit. In fact, this is
needed and useful when dealing with direct product variations (DPV) 
as considered in \cite{Nat-primer,Natalie,BFG}.

An extension to the non-unit case is also possible, but requires a
larger machinery from algebraic number theory, as developed in
\cite{Bernd} for the treatment of the Pisot substitution conjecture in
the general non-unit case. This is work in progress.

Finally, also $S$-adic type inflations can be covered, provided that
the participating inflation rules are compatible in the sense that
they share the same substitution matrix. Further, there are
applications of our results to the Eberlein decomposition for Dirac
combs of primitive inflation systems \cite{BaSt} and consequences for
the spectral theory of regular sequences \cite{CEM}. \bigskip

\section*{Acknowledgements}

It is a pleasure to thank N.P.~Frank, F.~G\"{a}hler, N.~Ma\~{n}ibo,
B.~Sing and N.~Strungaru for helpful discussions, and an anonymous
referee for several thoughtful suggestions that helped to improve the
presentation. Our work was supported by the German Research Foundation
(DFG), within the CRC 1283 at Bielefeld University, and by EPSRC
through grant EP/S010335/1.  \bigskip


\begin{thebibliography}{100}
\small

\bibitem{Aki}
Akiyama S, Barge M, Berth\'{e} V, Lee J-Y and Siegel A,
On the Pisot substitution conjecture,
in \textit{Mathematics of Aperiodic Order},
eds.\ Kellendonk J, Lenz D and Savinien J,
Birkh\"{a}user, Basel (2015), pp.~33--72. 

\bibitem{BFG}
Baake M, Frank N P and Grimm U,
Three variations on a theme by Fibonacci,
\textit{Stoch.\ Dyn.} \textbf{21} (2021) 2140001:1--23;
\texttt{arXiv:1910.00988}.

\bibitem{BFGR}
Baake M, Frank N P, Grimm U and Robinson E A,
Geometric properties of a binary non-Pisot inflation
and absence of absolutely continuous diffraction,
\textit{Studia Math.} \textbf{247} (2019) 109--154;
\texttt{arXiv:1706.03976}.

\bibitem{BG}
Baake M and G\"{a}hler F,
Pair correlations of aperiodic inflation rules via 
renormalisation:\ Some interesting examples,
\textit{Topology  \& Appl.} \textbf{205} (2016) 4--27;
\texttt{arXiv:1511.00885}.

\bibitem{BGM}
Baake M, G\"{a}hler F and Ma\~{n}ibo N, 
Renormalisation of pair correlation measures for primitive 
inflation rules and absence of absolutely continuous diffraction,
\textit{Commun.\ Math.\ Phys.} \textbf{370} (2019)
591--635; \texttt{arXiv:1805.09650}.

\bibitem{TAO}
Baake M and Grimm U,
\textit{Aperiodic Order. Vol.\ 1: A Mathematical Invitation},
Cambridge University Press, Cambridge (2013).

\bibitem{ICQ}
Baake M and Grimm U,
Diffraction of a model set with complex windows,
\textit{J.\ Phys.: Conf.\ Ser.} \textbf{1458} (2020) 012006:1--6;
\texttt{arXiv:1904.08285}.

\bibitem{BL}
Baake M and Lenz D,
Dynamical systems on translation bounded measures: 
Pure point dynamical and diffraction spectra,
\textit{Ergod.\ Th.\ \& Dynam.\ Syst.} \textbf{24} (2004)
1867--1893; \newline \texttt{arXiv:math.DS/0302061}.

\bibitem{BL-review}
Baake M and Lenz D,
Spectral notions of aperiodic order,
\textit{Discr.\ Cont.\ Dynam.\ Syst.\ S}
\textbf{10} (2018) 161--190;
\texttt{arXiv:1601.06629}.

\bibitem{BM00}
Baake M and Moody R V,
Self-similar measures for quasicrystals, in
\textit{Directions in Mathematical Quasicrystals},
eds.\ Baake M and Moody R V, CRM Monograph Series,
vol.~13, AMS, Providence,
RI (2000), pp.~1--42; \texttt{arXiv:math.MG/0008063}.
  
\bibitem{BS}
Baake M and Sing B, 
Kolakoski-$(3,1)$ is a (deformed) model set,
\textit{Can.\ Math.\ Bull.} \textbf{47} (2004)
168--190; \texttt{arXiv:math.MG/0206098}.

\bibitem{BaSt}
Baake M and Strungaru N,
Eberlein decomposition for PV inflation systems,
\textit{preprint} (2020); \newline
\texttt{arXiv:2005.06888}.

\bibitem{BP}
Barreira L and Pesin Y,
\textit{Nonuniform Hyperbolicity: Dynamics of Systems
  with Nonzero Lyapunov Exponents}, Cambridge University
Press, Cambridge (2007).
  
\bibitem{Brauer}
Brauer A,
On algebraic equations with all but one root in the
interior of the unit circle,
\textit{Math.\ Nachr.} \textbf{4} (1950/51) 250--257.

\bibitem{CS1}
Clark A and Sadun L,
When size matters: Subshifts and their related tiling spaces,
\textit{Ergod.\ Th.\ \& Dynam.\ Syst.} \textbf{23}
(2003) 1043--1057; \texttt{arXiv:math.DS/0201152}.
  
\bibitem{CS2}
Clark A and Sadun L,
When shape matters: Deformation of tiling spaces,
\textit{Ergod.\ Th.\ \& Dynam.\ Syst.} \textbf{26}
(2006) 69--86; \texttt{arXiv:math.DS/0306214}.

\bibitem{CEM}
Coons M, Evans J and Ma\~{n}ibo N,
Beyond substitutions:\ the spectral theory of regular sequences,
\textit{preprint} (2020); \texttt{arXiv:2009.01402}.

\bibitem{FFIW}
Feng D-J, Furukado M, Ito S and Wu J,
Pisot substitutions and the Hausdorff dimension of 
boundaries of atomic surfaces, 
\textit{Tsukuba J.\ Math.} \textbf{26} (2006) 195--223.

\bibitem{Nat-primer}
Frank N P,
A primer of substitution tilings of the Euclidean plane,
\textit{Expo.\ Math.} \textbf{26} (2008) 295--326;
\texttt{arXiv:0705.1142}.

\bibitem{Natalie}
Frank N P and Robinson E A,
Generalized $\beta$-expansions, substitution tilings, and
local finiteness,
\textit{Trans.\ Amer.\ Math.\ Soc.} \textbf{360}  (2008)
1163--1177;
\texttt{arXiv:math.DS/0506098}.

\bibitem{FG}
Fuhrmann G and Gr\"{o}ger M,
Constant length substitutions, iterated function systems and
amorphic complexity,
\textit{Math.\ Z.} \textbf{295} (2020) 1385--1404; 
\texttt{arXiv:1812.10789}.

\bibitem{Franz}
G\"{a}hler F,
private communication (2019).

\bibitem{GL}
Godr\`{e}che C and Luck J M, 
Multifractal analysis in reciprocal space and the
nature of the Fourier transform of self-similar
structures, \textit{J.\ Phys.\ A:\ Math.\ Gen.} 
\textbf{23} (1990) 3769--3797.

\bibitem{HJ}
Horn R A and Johnson C R,
\textit{Matrix Analysis}, 2nd ed.,
Cambridge University Press, Cambridge (2013).

\bibitem{House}
Householder A S,
\textit{The Theory of Matrices in Numerical Analysis},
reprint, Dover, New York (1975).

\bibitem{HuRi}
Huck C and Richard C,
On pattern entropy of weak model sets,
\textit{Discr.\ Comput.\ Geom.}
\textbf{54} (2015) 714--757;
\texttt{arXiv:1412.6307}.

\bibitem{ItoRao}
Ito S and Rao H,
Atomic surfaces, tilings and coincidence I. Irreducible case,
\textit{Israel J.\ Math.} \textbf{153} (2006) 129--156.

\bibitem{JN}
Johnson C R and Nylen P,
Monotonicity properties of norms,
\textit{Lin.\ Alg.\ Appl.} \textbf{148} (1991) 43--58.

\bibitem{LP}
Lagarias J C and Pleasants P A B,
Repetitive Delone sets and quasicrystals,
\textit{Ergod.\ Th.\ \& Dynam.\ Syst.} \textbf{23} (2003) 831--867;
\texttt{arXiv:math.DS/9909033}.

\bibitem{LW}
Lagarias J C and Wang Y,
Substitution Delone sets,
\textit{Discr.\ Comput.\ Geom.} \textbf{29} (2003) 175--209;
\texttt{arXiv:math.MG/0110222}.

\bibitem{LMS}
Lee J-Y, Moody R V and Solomyak B,
Pure point dynamical and diffraction spectra,
\textit{Ann.\ H.\ Poincar\'{e}} \textbf{2} (2002)
1003--1018; \texttt{arXiv:0910.4809}.

\bibitem{Daniel}
Lenz D,
Continuity of eigenfunctions of uniquely ergodic dynamical
systems and intensity of Bragg peaks,
\textit{Commun.\ Math.\ Phys.} \textbf{287} (2009)
225--258; \texttt{arXiv:math-ph/0608026}.

\bibitem{LGJJ}
Luck J M, Godr\`{e}che C, Janner A and Janssen T,
The nature of the atomic surfaces of quasiperiodic
self-similar structures,
\textit{J.\ Phys.\ A:\ Math.\ Gen.} \textbf{26} (1993) 1951--1999.

\bibitem{Neil-thesis}
Ma\~{n}ibo C N,
\textit{Lyapunov Exponents in the Spectral Theory of
Primitive Inflation Systems}, PhD thesis, Bielefeld
University (2019); available at
\texttt{urn:nbn:de:0070-pub-29359727}.

\bibitem{Neil}
Ma\~{n}ibo C N,
private communication (2019).

\bibitem{Ali}
Messaoudi A,
Fronti\`{e}re du fractals de Rauzy et syst\`{e}me de num\'{e}ration
complexe,
\textit{Acta Arithm.} \textbf{95} (2000) 195--224. 

\bibitem{Bob}
Moody R V,
Model sets:\ A survey, 
in \textit{From Quasicrystals to More Complex Systems},
eds.\ Axel F, D\'{e}noyer F and Gazeau J P, 
EDP Sciences, Les Ulis, and Springer, Berlin (2000),
pp.~145--166; \texttt{arXiv:math.MG/0002020}.

\bibitem{Bob-ufd}
Moody R V,
Uniform distribution in model sets,
\textit{Can.\ Math.\ Bull.} \textbf{45} (2002) 123--130.

\bibitem{Neukirch}
Neukirch J,
\textit{Algebraic Number Theory},
Springer, Berlin (1999).
  
\bibitem{Peter}
Pleasants P A B,
Designer quasicrystals:\ Cut and project sets with
pre-assigned properties, in
\textit{Directions in Mathematical Quasicrystals},
eds.\ Baake M and Moody R V, CRM Monograph Series,
vol.~13, AMS, Providence, RI (2000), pp.~95--141.

\bibitem{Pohl}
Pohl A D,
Symbolic dynamics, automorphic functions, and Selberg zeta functions
with unitary representations,
\textit{Contemp.\ Math.} \textbf{669} (2016) 205--236;
\texttt{arXiv:1503.00525}.  

\bibitem{PFBook}
Pytheas Fogg N,
\textit{Substitutions in Dynamics, Arithmetics and Combinatorics}, 
LNM 1794, Springer, Berlin (2002).

\bibitem{Q}
Queff\'{e}lec M,
\textit{Substitution Dynamical Systems --- Spectral Analysis},
2nd ed., LNM 1294, Springer, Berlin (2010).

\bibitem{Rauzy}
Rauzy G,
Nombres alg\'{e}braiques et substitutions,
\textit{Bull.\ Soc.\ Math.\ France} \textbf{110} (1982) 147--178.

\bibitem{RS}
Reed M and Simon B,
\textit{Methods of Modern Mathematical Physics. I.
Functional Analysis}, 2nd ed., Academic Press, San 
Diego, CA (1980).

\bibitem{Martin1}
Schlottmann M,
Cut-and-project sets in locally compact Abelian groups,
in \textit{Quasicrystals and Discrete Geometry},
ed.\ Patera J, Fields Institute Monographs,
vol.~10, AMS, Providence, 
RI (1998), pp.~247--264.

\bibitem{Seneta}
Seneta E,
\textit{Non-negative Matrices and Markov Chains},
rev.\ printing, Springer, New York (2006). 

\bibitem{ST}
Siegel A and Thuswaldner J M, 
\textit{Topological Properties of Rauzy Fractals}, 
{M\'{e}m.\ Soc.\ Math.\ France} {118},
Soci\'{e}t\'{e} Math\'{e}matiques de France, Paris (2009).

\bibitem{Bernd}
Sing B,
\textit{Pisot Substitutions and Beyond},
PhD thesis, Bielefeld University (2007); available at
\texttt{urn:nbn:de:hbz:361-11555}.

\bibitem{BerndPC}
Sing B,
private communication (2019).

\bibitem{OEIS}
Sloane N J A,
The On-Line Encyclopedia of Integer Sequences; 
available at \texttt{http://oeis.org/}. 

\bibitem{Boris}
Solomyak B,
Dynamics of self-similar tilings,
\textit{Ergod.\ Th.\ \& Dynam.\ Syst.} \textbf{17} (1997) 695--738;
Erratum, \textit{Ergod.\ Th.\ \& Dynam.\ Syst.} \textbf{19} (1999) 
1685.

\bibitem{Str}
Strungaru N,
Almost periodic measures and long-range order in Meyer sets,
\textit{Discr.\ Comput.\ Geom.} \textbf{33} (2005) 483--505.

\bibitem{Nicu}
Strungaru N,
Almost periodic pure point measures,
in \textit{Aperiodic Order. Vol.~2: Crystallography and
Almost Periodicity}, eds.\ Baake M and Grimm U, Cambridge University
Press, Cambridge (2017), pp.~271--342;
\texttt{arXiv:1501.00945}.

\end{thebibliography}
\end{document}